\newcommand{\what}{\widehat}%
\newcommand{\R}{\mathbb R}%
\newcommand{\C}{\mathbb C}%
\newcommand{\Z}{\mathbb Z}%
\newcommand{\N}{\mathbb N}%
\newtheorem{theorem}{Theorem}[section]
\newtheorem{lemma}[theorem]{Lemma}
\theoremstyle{definition}
\theoremstyle{definition}
\newtheorem{remark}[theorem]{Remark}
\numberwithin{equation}{section}
\begin{document}
\title{Wiener Tauberian theorem for rank one semisimple Lie groups}
\subjclass[2010]{Primary 43A85; Secondary 22E30} \keywords{Wiener Tauberian theorem, Spherical transform, Hypergeometric functions, Resolvent transform}

\author{Sanjoy Pusti and Amit Samanta}

\address[Sanjoy Pusti]{Department of Mathematics and Statistics; Indian Institute of Technology, Kanpur-208016, India.}
\email{spusti@iitk.ac.in}

\address[Amit Samanta]{Department of Mathematics and Statistics; Indian Institute of Technology, Kanpur-208016, India.}
\email{amit.gablu@gmail.com}

\thanks{The second author is financially supported by NBHM, Govt. of India}
\thanks{We are thankful to Professors E. K. Narayanan and Rudra P. Sarkar for their helpful comments and suggestions.
}

\begin{abstract}
We prove a genuine analogue of Wiener Tauberian theorem for $L^1(G//K)$,  where $G$ is a semisimple Lie group of real rank one. This generalizes the corresponding result on the automorphism group of the unit disk by Y. Ben Natan, Y. Benyamini, H. Hedenmalm and Y. Weit (\cite{Ben-2}). 
\end{abstract}

\maketitle
\section{Introduction}
A famous theorem of Norbert Wiener states that for a function $f\in L^1(\R)$, span of translates $f(x-a)$ with complex coefficients is dense in $L^1(\R)$ if and only if the Fourier transform $\what{f}$ is nonvanishing on $\R$. That is the ideal generated by $f$ in $L^1(\R)$ is dense in $L^1(\R)$ if and only if the Fourier transform $\what{f}$ is nonvanishing on $\R$. This theorem is well known as the Wiener Tauberian theorem.  This theorem has been extended to abelian groups. The hypothesis (in the abelian case) is on a Haar integrable function which has nonvanishing Fourier transform on all unitary characters. However, Ehrenpreis and Mautner (in \cite{EM1}) has observed that Wiener Tauberian theorem fails even for the commutative Banach algebra of integrable radial functions
on $\mathrm {SL}(2, \R)$. This failure can  be attributed to the existence of the nonunitary uniformly bounded representations in groups of this class (see \cite{EM2, K-S}). However a modified version of the theorem was established in \cite[Theorem 6]{EM1} for radial functions in $L^1(\mathrm{SL}(2, \R))$. In their theorem they prove that if a function $f$ satisfies ``not-to-rapidly decay'' condition and nonvanishing condition on some extended strip, etc., then the ideal generated by $f$ is dense in $L^1(\mathrm{SL}(2, \R)//\mathrm{SO}(2))$.   This has been  extended to all rank one semisimple Lie groups in the $K$-biinvariant setting (see \cite{Ben-0}, \cite{Sarkar-1998}). For $\delta>0$, let $S_{1, \delta}=\{\lambda\in\C\mid |\Im\lambda|\leq \rho+\delta\}$ and $S_1=\{\lambda\in\C\mid |\Im\lambda|\leq \rho\}$. Then their theorem (for a single function) is as follows:
\begin{theorem}
Let $f\in L^1(G//K)$ be such that its spherical transform $\what{f}$ satisfies the following:
\begin{enumerate}
\item $\what{f}$ is analytic on $S_{1, \delta}^\circ$, continuous on $S_{1, \delta}$,
\item $\lim_{|\lambda|\rightarrow\infty}\what{f}(\lambda)=0$ in $S_{1, \delta}$,
\item $\what{f}(\lambda)\not=0$ for all $\lambda\in S_{1, \delta}$ and 
\item $\limsup_{|t|\rightarrow\infty}\what{f}(t)e^{Ke^{|t|}}>0$ for all $K>0$

\end{enumerate}
Then the ideal generated by $f$ in $L^1(G//K)$ is dense in $L^1(G//K)$.
\end{theorem}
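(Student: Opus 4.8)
The plan is to argue by contradiction using the resolvent (Carleman) transform method of Ben Natan--Benyamini--Hedenmalm--Weit, carried out by means of the explicit hypergeometric description of the spherical functions that is available in real rank one.

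First I would reduce to a convolution equation. Assume the closed ideal $J$ generated by $f$ in $L^1(G//K)$ is proper. Since the dual of $L^1(G//K)$ is $L^\infty(G//K)$ and $L^1(G//K)$ has a bounded approximate identity, there is a nonzero $\phi\in L^\infty(G//K)$ annihilating $J$; because $J=\overline{L^1(G//K)*f}$ and $f$ is symmetric (being $K$-biinvariant), this is equivalent to the convolution equation $f*\phi=0$, holding identically as a bounded continuous function on $G$. The theorem follows once we show that (1)--(4) force $\phi=0$.

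Next comes the resolvent transform and its analytic continuation. For $|\Im\lambda|>\rho$ there is a unique $G_\lambda\in L^1(G//K)$ with spherical transform $\what{G_\lambda}(\mu)=(\lambda^2-\mu^2)^{-1}$, namely the Green kernel of $\Omega-(\lambda^2+\rho^2)$; by the Harish-Chandra series and the explicit rank-one $c$-function, $G_\lambda$ is expressed through hypergeometric functions, $\lambda\mapsto G_\lambda$ is holomorphic into $L^1(G//K)$ on $\{|\Im\lambda|>\rho\}$, and $\|G_\lambda\|_1$ is bounded on each $\{|\Im\lambda|\ge\rho+\varepsilon\}$ and tends to $0$ as $|\Re\lambda|\to\infty$ there. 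Put $F(\lambda):=\langle G_\lambda,\phi\rangle$, holomorphic and bounded on $\{|\Im\lambda|>\rho\}$ and decaying in the real direction. By a division lemma, for $\lambda\in S_{1,\delta}^\circ$ there is a unique $h_\lambda\in L^1(G//K)$ with $\what{h_\lambda}(\mu)=\bigl(\what f(\mu)-\what f(\lambda)\bigr)/(\lambda^2-\mu^2)$ — legitimate because $\what f$ is analytic on $S_{1,\delta}^\circ$ and continuous on $S_{1,\delta}$ by (1)--(2), and because the numerator vanishes at $\mu=\pm\lambda$ — with $\lambda\mapsto h_\lambda$ holomorphic into $L^1(G//K)$. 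On the overlap $\rho<|\Im\lambda|<\rho+\delta$ one has $h_\lambda=f*G_\lambda-\what f(\lambda)\,G_\lambda$, hence $\langle h_\lambda,\phi\rangle=\langle G_\lambda,f*\phi\rangle-\what f(\lambda)F(\lambda)=-\what f(\lambda)F(\lambda)$ since $f*\phi=0$. As $\what f$ has no zeros on $S_{1,\delta}$ by (3), the identity $F(\lambda)=-\langle h_\lambda,\phi\rangle/\what f(\lambda)$ continues $F$ holomorphically to $S_{1,\delta}^\circ$, and since $\{|\Im\lambda|>\rho\}\cup S_{1,\delta}^\circ=\mathbb{C}$, this exhibits $F$ as an entire function; moreover $\what f\cdot F=-\langle h_\cdot,\phi\rangle$ on $S_{1,\delta}^\circ$.

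Finally I would control the growth of $F$ and conclude. Off the strip $F$ is bounded and decays at real infinity; inside, $|\what f(\lambda)F(\lambda)|=|\langle h_\lambda,\phi\rangle|\le\|\phi\|_\infty\|h_\lambda\|_1$, and from the uniform hypergeometric estimates $\|h_\lambda\|_1$ grows at most polynomially in $|\Re\lambda|$ on each substrip $S_{1,\delta'}$, $\delta'<\delta$. So $\what f\cdot F$ is holomorphic of polynomial growth on $S_{1,\delta}^\circ$ while $F$ is entire and bounded off $S_1$; in effect the entire function $F$ already records that the annihilator $\phi$ is ``concentrated at infinity''. One then invokes a Phragm\'en--Lindel\"of / quasi-analytic uniqueness theorem in the strip: under the conformal map of $S_1$ onto the unit disc sending $\Re\lambda\to\pm\infty$ to the boundary points $w=\pm1$, the decay rate $e^{-Ke^{|t|}}$ on $\mathbb R$ corresponds exactly to the borderline rate $e^{-c/(1-|w|)}$ at $w=\pm1$, so hypothesis (4) says precisely that $\what f$ does not decay faster than this borderline rate; combined with the polynomial bound on $\what f\cdot F$ and the analyticity in (1) (which transfers the real-axis information into $S_1$), this forces $F\equiv0$, whence $\langle G_\lambda,\phi\rangle=0$ for $|\Im\lambda|>\rho$ and $\langle h_\lambda,\phi\rangle=0$ for $\lambda\in S_{1,\delta}^\circ$, and since the spherical transforms of these functions are dense among those able to carry the spectrum of $\phi$, we conclude $\phi=0$ — the desired contradiction, so $J=L^1(G//K)$.

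I expect the main obstacle to be twofold. First, the analytic continuation must be made rigorous: one needs the division lemma in the spherical setting and, crucially, sharp estimates uniform in the spectral parameter across the whole strip — boundedness and real-direction decay of $\|G_\lambda\|_1$ and polynomial-in-$|\Re\lambda|$ control of $\|h_\lambda\|_1$ — which come from the rank-one Harish-Chandra expansion and the asymptotics of the relevant Jacobi/hypergeometric functions, and which are exactly why the result is restricted to real rank one. Second, and this is the genuine crux, the concluding uniqueness step lies at the very edge of quasi-analyticity, the threshold $e^{-Ke^{|t|}}\leftrightarrow e^{-c/(1-|w|)}$ being the fastest decay a nonzero bounded holomorphic function on the disc may have at a boundary point; making the Phragm\'en--Lindel\"of argument work at this borderline (passing to the disc, exploiting the Hadamard factorization of $\what f$, and handling the behaviour at $w=\pm1$ simultaneously) is where all four hypotheses are used at once and where the real difficulty resides.
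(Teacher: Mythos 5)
Your proposal follows essentially the same route as the paper's machinery: the resolvent-transform method of Ben Natan et al., built on the Green kernel $b_\lambda$ (your $G_\lambda$) with spherical transform $(\xi^2-\lambda^2)^{-1}$, the division operator $T_\lambda f$ (your $h_\lambda$), polynomial-in-$\lambda$ growth estimates for the $L^1$ norms coming from uniform hypergeometric asymptotics, a borderline log-log/quasi-analyticity theorem to annihilate the resolvent transform, and density of $\{b_\lambda:\Im\lambda>\rho\}$ to conclude $\phi=0$. Two remarks on the comparison. First, the paper does not prove this particular statement directly; it quotes it as known and proves the stronger Theorem \ref{main-theorem}, of which this is a special case. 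Second, the one place where your argument genuinely diverges is the analytic continuation of $F=\mathcal R[\phi]$ across $\partial S_1$: you obtain it from hypotheses (1)--(3) by matching the two formulas for $F$ on the overlap $\rho<|\Im\lambda|<\rho+\delta$ and dividing by the nonvanishing $\what f$ there, whereas the paper gets an entire $L^1(G//K)/I$-valued extension $\lambda\mapsto B_\lambda$ from Banach-algebra theory (invertibility of $\delta-(\lambda^2-\lambda_0^2)b_{\lambda_0}+I$ in $L^1_\delta(G//K)/I$), which requires only nonvanishing on $S_1$ and uses the extended strip nowhere. Your version is simpler and legitimate for the statement as given, but it is exactly the step that forces the superfluous $S_{1,\delta}$ hypothesis that the paper is written to remove. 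The two technical points you flag as obstacles --- that $h_\lambda$ actually lies in $L^1$ with norm $O\bigl((1+|\lambda|)^L d(\lambda,\partial S_1)^{-1}\bigr)$, and the borderline uniqueness argument --- are precisely the content of the paper's Lemmas \ref{L-1 norm of T-lambda-f} and \ref{loglog theorem} (the latter yielding first that $F$ is a polynomial, after which decay of $\|b_\lambda\|_1$ along the imaginary axis forces $F\equiv 0$), so your identification of where the real work lies is accurate.
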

The condition on the extended strip is due to technical reason. With the extended strip condition
the theorem above has been extended to the full group $\mathrm{SL}(2, \R)$ (see \cite{Sarkar-1997}), on rank one symmetric spaces (see \cite{Sarkar-1998}) and on arbitrary rank symmetric space (see \cite{Sitaram-1980}).  See also \cite{Sitaram-1988}, \cite{Naru-2009, Naru-2011} for furthur reference. Y. Ben Natan, Y. Benyamini, H. Hedenmalm and Y. Weit (in \cite{Ben-1, Ben-2}) proved a {\em genuine analogue}  of  the Wiener Tauberian theorem without the extended strip condition on $\mathrm{SL}(2,\R)$ in the $K$-biinvariant setting.  The extra nonvanishing condition on the  extended strip was removed for rank one semisimple Lie groups in \cite{Pusti-2011}.  No other result  is known on  Wiener Tauberian theorem without this superfluous  conditions. Our aim in this paper is to extend this result  to real rank one semisimple Lie group in the $K$-biinvariant setting. Thus we prove a {\em genuine analogue} of Wiener Tauberian theorem on real rank one semisimple Lie group in the $K$-biinvariant setting.

For any function $F$ on $\R$, we let $$\delta_\infty^+(F)=-\limsup_{t\rightarrow\infty} e^{-\frac{\pi}{2\rho}t}\log|F(t)|.$$  Then our theorem states that,

\begin{theorem}\label{main-theorem}
Let $\{f_\alpha\mid \alpha\in \Lambda\}$ be a collection of functions in $L^1(G//K)$, such that $\{\what{f_\alpha}\mid\alpha\in \Lambda\}$ has no common zero in $S_1$ and $\inf_{\alpha\in\Lambda}\delta_\infty^+(\what{f_\alpha})=0$. Then the ideal generated by $\{f_\alpha\mid \alpha\in \Lambda\}$  is dense in in $L^1(G//K)$.
\end{theorem}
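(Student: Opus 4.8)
We follow the \emph{resolvent transform} method of Ben Natan, Benyamini, Hedenmalm and Weit, transplanting it from $\mathrm{SL}(2,\mathbb{R})$ to a general rank one group by replacing the classical estimates for spherical functions with those coming from the realization of the elementary spherical functions $\varphi_\lambda$ as Jacobi functions, i.e. Gauss hypergeometric functions ${}_2F_1$, together with the Harish-Chandra $c$-function and the Trombi--Varadarajan type description of the image of $L^1(G//K)$ under the spherical transform (holomorphic on $S_1^{\circ}$, continuous and vanishing at infinity on $S_1$, invariant under $\lambda\mapsto-\lambda$). Throughout, the spherical transform identifies the Gelfand spectrum of the commutative Banach algebra $L^1(G//K)$ with $S_1$ modulo $\lambda\mapsto-\lambda$.

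\textbf{Step 1 (Reductions).} Since $S_1/\{\pm1\}$ is $\sigma$-compact we may pass to a countable subfamily $\{f_n\}_{n\in\mathbb{N}}$ that still has no common zero in $S_1$ and still satisfies $\inf_n\delta_\infty^+(\widehat{f_n})=0$; unwinding the definition, the latter says that for every $\varepsilon>0$ there are $n(\varepsilon)\in\mathbb{N}$ and $T(\varepsilon)>0$ with $|\widehat{f_{n(\varepsilon)}}(t)|\ge\exp\!\big(-\varepsilon\,e^{\frac{\pi}{2\rho}|t|}\big)$ for $|t|\ge T(\varepsilon)$. Convolving each $f_n$ with one fixed $K$-biinvariant heat kernel --- whose spherical transform $e^{-s(\lambda^2+\rho^2)}$ is entire, zero-free and rapidly decreasing on every horizontal strip --- we may moreover assume every $\widehat{f_n}$ extends holomorphically and is rapidly decreasing on a neighbourhood of $S_1$; the two hypotheses persist, the ``no common zero'' because the heat multiplier is zero-free, and ``$\inf_n\delta_\infty^+=0$'' because $\delta_\infty^+(\widehat{f_n}\,e^{-s(\lambda^2+\rho^2)})=\delta_\infty^+(\widehat{f_n})$, the added term $\log|e^{-s(t^2+\rho^2)}|=O(t^2)$ being $o\big(e^{\frac{\pi}{2\rho}t}\big)$. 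Finally it is enough to show that the closed ideal $I$ generated by $\{f_n\}$ contains \emph{some} single $\psi\in L^1(G//K)$ whose spherical transform is zero-free on $S_1$, holomorphic on a wide strip and rapidly decreasing: such a $\psi$ satisfies every hypothesis of the Theorem of the Introduction --- condition~(4) included, since its merely polynomial (not doubly exponential) decay makes that $\limsup$ infinite --- so $\psi$ alone generates a dense ideal and hence $I\supseteq\overline{\psi\ast L^1(G//K)}=L^1(G//K)$.

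\textbf{Step 2 (The resolvent transform).} Assume $I\ne L^1(G//K)$, so $\mathcal{B}:=L^1(G//K)/I\ne\{0\}$, and fix $\psi$ as above with image $\bar\psi\in\mathcal{B}$. Deriving $\bar\psi=0$ would give $I\supseteq\overline{\psi\ast L^1(G//K)}=L^1(G//K)$, against our assumption; so assume $\bar\psi\ne0$ and fix a continuous linear functional $\Phi$ on the unitization $\mathcal{B}^{+}$ with $\Phi(\bar\psi)\ne0$. From the resolvent kernel of the Laplace--Beltrami operator we obtain a point-separating element of $\mathcal{B}^{+}$; for $z$ in the resolvent set $\Omega$ --- the complement of the compact spectrum $\sigma$, which is the closure of the image of $S_1/\{\pm1\}$ under the associated spherical multiplier --- the resolvent $R(z)\in\mathcal{B}^{+}$ is defined and holomorphic, and the assumption that $\{\widehat{f_n}\}$ has no common zero in $S_1$, applied via a smooth partition of unity on $S_1/\{\pm1\}$ subordinate to $\{\widehat{f_n}\ne0\}_n$, lets us write $R(z)$ concretely enough to study $r(z):=\Phi\!\big(\bar\psi\cdot R(z)\big)$. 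One then checks: (i) $r$ is holomorphic on $\Omega$ and $r(z)\to0$ as $z\to\infty$; (ii) along the finite part of $\partial\Omega$ the non-vanishing of the $\widehat{f_n}$ allows $r$ to be continued analytically, so that --- after transporting to the $\lambda$-plane and composing with the exponential $\lambda\mapsto e^{\frac{\pi}{2\rho}\lambda}$, which maps the strip of width $2\rho$ conformally onto a half-plane and sends $\lambda=t$ to a point of modulus $e^{\frac{\pi}{2\rho}t}$ --- $r$ becomes a bounded holomorphic function on a half-plane, holomorphic up to the boundary except at one boundary point (the image of $\lambda=\infty$); (iii) near that exceptional point the growth of $r$ is dominated by $\sup_n|\widehat{f_n}|^{-1}$ near $\lambda=\infty$ in $S_1$, hence by the lower bound of Step~1 it is $O\!\big(\exp(\varepsilon\,e^{\frac{\pi}{2\rho}t})\big)$ for every $\varepsilon>0$, i.e. of arbitrarily small exponential type in the half-plane variable.

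\textbf{Step 3 (Conclusion and the main obstacle).} Properties (i)--(iii) place $r$ exactly at the borderline of a Phragm\'en--Lindel\"of / quasianalyticity uniqueness theorem for bounded holomorphic functions on a half-plane that are regular up to the boundary save at a single point: the growth permitted by $\inf_n\delta_\infty^+(\widehat{f_n})=0$ is just small enough to force $r\equiv0$. As $\Phi$ was arbitrary, $\bar\psi\cdot R(z)=0$ for all $z\in\Omega$, and letting $z\to\infty$ (where $z\,R(z)\to-\mathbf 1$ in $\mathcal{B}^{+}$) gives $\bar\psi=0$, the desired contradiction; hence $\psi\in I$ and, by Step~1, $I$ is dense in $L^1(G//K)$. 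The heart of the matter --- and the genuine difference from the $\mathrm{SL}(2,\mathbb{R})$ case --- lies in (ii) and (iii): one needs sharp asymptotics of $\varphi_\lambda$ and precise control of $c(\lambda)$ for $\lambda$ off $S_1$, exactly the data carried by the ${}_2F_1$-representation of the spherical function and by the Gindikin--Karpelevich-type product formula for the $c$-function, both in order to identify $\tfrac{\pi}{2\rho}$ as the correct conformal exponent and in order to estimate $R(z)$ near $\sigma$ uniformly in the partition-of-unity index $n$, so that the uniqueness theorem applies under the \emph{sharp} hypothesis $\inf_n\delta_\infty^+=0$ rather than a strictly positive infimum.
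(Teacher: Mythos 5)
Your outline correctly identifies the strategy the paper uses (the resolvent transform of Ben Natan--Benyamini--Hedenmalm--Weit, transplanted via the ${}_2F_1$ realization of $\phi_\lambda$ and the $c$-function), and your Step 1 reductions (countable subfamily, heat-kernel regularization, reduction to producing one zero-free $\psi\in I$ and invoking the Ehrenpreis--Mautner type theorem) are legitimate, if differently framed from the paper, which instead shows directly that every $g\in L^\infty(G//K)$ annihilating $I$ vanishes, using density of $\mathrm{span}\{b_\lambda:\Im\lambda>\rho\}$. But there is a genuine gap at the analytic core. In Step 2(ii) you assert that after conformal transport $r$ ``becomes a bounded holomorphic function on a half-plane, holomorphic up to the boundary except at one boundary point.'' Nothing you have said justifies boundedness, and boundedness near $\partial S_1$ is precisely the hard part of the whole proof. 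The only a priori bounds one can prove for the resolvent transform are of the form $|\mathcal R[g](\lambda)|\leq C(1+|\lambda|)^{K}d(\lambda,\partial S_1)^{-1}$ outside $S_1$ and $|\widehat{f_\alpha}(\lambda)\mathcal R[g](\lambda)|\leq C(1+|\lambda|)^{L}d(\lambda,\partial S_1)^{-1}$ inside $S_1^{\circ}$; both blow up as $\lambda$ approaches $\partial S_1$, and analytic continuation across the finite part of the boundary does not remove that blow-up. Converting these bounds into boundedness (in rank one, only into polynomial growth, because of the extra factor $(1+|\lambda|)^{N}$, which is why the paper must modify the log-log theorem) requires the Beurling/log-log type theorem of Hedenmalm--Dahlner, and it is exactly there --- not in a separate Phragm\'en--Lindel\"of step afterwards --- that the hypothesis $\inf_\alpha\delta_\infty^{+}(\widehat{f_\alpha})=0$ is consumed. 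Relatedly, your unwinding of that hypothesis is incorrect: $\delta_\infty^{+}$ is defined by a $\limsup$, so it yields the lower bound $|\widehat{f_{n(\varepsilon)}}(t_k)|\geq\exp(-\varepsilon e^{\pi t_k/2\rho})$ only along a sequence $t_k\to+\infty$, not for all $|t|\geq T(\varepsilon)$; an argument that needs the everywhere-version would be using a strictly stronger hypothesis than the theorem grants.

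The second gap is that the two estimates above do not come for free from ``the resolvent kernel of the Laplace--Beltrami operator'': they require explicit representatives of the cosets $B_\lambda$ together with $L^1$-norm bounds that are uniform up to polynomial factors in $\lambda$. Concretely one must (i) define $b_\lambda$ as a multiple of the singular solution $\Phi_\lambda$ of $L\phi=-(\lambda^2+\rho^2)\phi$, prove it is a sum of an $L^1$ and an $L^p$ ($p<2$) function, and compute $\widehat{b_\lambda}(\xi)=(\xi^2-\lambda^2)^{-1}$ --- which in general rank one is not available from a tabulated integral and is obtained in the paper by a distributional argument showing $Lb_\lambda+(\lambda^2+\rho^2)b_\lambda$ is supported at $eK$; (ii) for $0<\Im\lambda<\rho$ set $T_\lambda f=\widehat f(\lambda)b_\lambda-f*b_\lambda$, rewrite it via the functional equation $\int_K b_\lambda(a_ska_t)\,dk=b_\lambda(a_{\max(s,t)})\phi_\lambda(a_{\min(s,t)})$, and bound $\|T_\lambda f\|_1\leq C\|f\|_1(1+|\lambda|)^{L}d(\lambda,\partial S_1)^{-1}$ using hypergeometric estimates of $b_\lambda$ near $t=0$ and $t=\infty$ that are uniform in $\lambda\in\mathbb C_+$. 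You explicitly defer all of this to ``sharp asymptotics of $\varphi_\lambda$ and precise control of $c(\lambda)$,'' but this is not a routine verification --- it occupies Sections 3, 4 and the Appendix of the paper --- and without it neither the entirety of $\mathcal R[g]$ beyond $\Im\lambda>\rho$ nor the growth bounds feeding the log-log theorem are established. As written, the proposal is a correct roadmap with the two load-bearing steps missing.
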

The proof of this theorem is an adaptation of the proof of Ben Natan et. al. (\cite{Ben-2}), which uses resolvent tranform method.
The outline of the proof of the theorem is as follows:
Let $I$ be the ideal generated by $\{f_\alpha\mid \alpha\in \Lambda\}$ in $L^1(G//K)$.
\begin{enumerate}
\item First we prove that for each $\lambda\in\C$ with $\Im\lambda>0$, there exists a family of functions $\{b_\lambda\}$ such that $\widehat{b_\lambda}(\xi)=\frac{1}{\xi^2-\lambda^2}$ for all $\xi\in \mathbb{R}$. Also $b_\lambda\in L^1(G//K)$ if and only if $\Im\lambda>\rho$ and $\{b_\lambda\mid \Im\lambda>\rho\}$ span a dense subspace of $L^1(G//K)$.

\item Let $g\in L^\infty(G//K)$ such that $g$ annihilates $I$. We define the resolvent transfrom $\mathcal R[g]$ by $$\mathcal R[g](\lambda)=\langle b_\lambda, g\rangle, \Im\lambda>\rho.$$ Considering $g$ as a bounded linear functional on $L^1(G//K)/I$, we write $$\mathcal R[g](\lambda)=\langle B_\lambda, g\rangle, \Im\lambda>\rho$$ where $B_\lambda=b_\lambda + I\in L^1(G//K)/I, \Im\lambda>\rho$.
\item By the Banach algebra theory (using the fact that spherical transforms of the  elements of $I$ have no common zero), $\lambda\mapsto B_\lambda$ can be extended as a $L^1(G//K)/I$ valued even entire function. This implies that, $\mathcal R[g]$ extends as an even entire function.
\item To get an explicit expression of $\mathcal R[g](\lambda)$ we need  representatives of the cosets $B_\lambda$  for $0<\Im \lambda<\rho$. For $f\in L^1(G//K)$, it will be shown that there is an $L^1$ function $T_\lambda f$ such that $$\what{T_\lambda f}(\xi)=\frac{\what{f}(\lambda)-\what{f}(\xi)}{\xi^2-\lambda^2}, \xi\in\R.$$ If $f\in I$, then $\frac{T_\lambda f}{\what{f}(\lambda)}$ will be a representative of $B_\lambda$, when $\lambda$ is not a zero of $\what{f}$. Since the spherical transforms of the elements of $I$ have no common zeros,  such a representation always exists.

\item We estimate the $L^1$ norm of $b_\lambda$ and $T_\lambda f$, which gives the necessary estimate for $\mathcal R[g]$.  Then a complex analysis technique (\cite{Hedenmalm-1985, Dah}), with the help of this estimate and not-to-rapid-decay condition, it will be shown that $\mathcal R(g)=0$.

\item By denseness of $\{b_\lambda\mid \Im\lambda>\rho\}$,  $g=0$.
\end{enumerate}
 
 As we mentioned earlier that the crux of proof is the resolvent transform method. Beside that both the solutions $\phi_\lambda$ and $\Phi_\lambda$ of the equation $L\phi=-(\lambda^2 + \rho^2)\phi$ play crucial role in the proof ($L$ is the Laplace-Beltrami operator on $G/K$). For $\mathrm{SL}(2, \R)$, they are given by (upto constants) the Legendre functions of first  and second kind respectively: we denote them by  $P_\lambda$ and $Q_\lambda$\footnote{ $P_\lambda, Q_\lambda$ differ from the usual Legendre function by cerain parametrization.} respectively. Whereas in general (rank one case) they are given by the hypergeometric functions (see (\ref{phi}), (\ref{formula-Phi-1}) in the next section).   
 
An integration formula involving the Legendre functions $P_\lambda$ and $Q_\lambda$ (\cite[p. 770, 7.114 (1)]{Grad}, \cite[(2-11)]{Ben-2}) directly gives that $\what{Q_\lambda}(\xi)=\frac{1}{\xi^2-\lambda^2}, \xi\in\R.$Therefore in the $\mathrm{SL}(2, \R)$ case $b_\lambda=Q_\lambda$. But in general, we could not locate similar integral formula involving $\phi_\lambda$ and $\Phi_\lambda$. We overcome this obstacle in the following way : 
Since $\Phi_\lambda$ solves $L\phi=-(\lambda^2 + \rho^2)\phi$ on $G\setminus K$, by a simple ditribution theoretic argument on the group level, we show that $\what{\Phi_\lambda}(\xi)=\frac{k(\lambda)}{\xi^2-\lambda^2}, \xi\in\R$, for some constant $k(\lambda)$. We find the constant $k(\lambda)$ by putting suitable values of $\xi$. Therefore by defining $b_\lambda=\frac{1}{k(\lambda)}\Phi_\lambda$ we get $\what{b_\lambda}(\xi)=\frac{1}{\xi^2-\lambda^2}$, as required.

Steps $(2)$ and $(3)$ (in the outline of the proof) are exactly similar to Ben Natan et. al..

For Step (4), we let $f\in I$. To get a representative of $B_\lambda, 0<\Im\lambda<\rho$, it is necessary to find a function $T_\lambda f\in L^1$ such that $\what{T_\lambda f}(\xi)=\frac{\what{f}(\lambda)-\what{f}(\xi)}{\xi^2-\lambda^2}, \xi\in\R$. In the $\mathrm{SL}(2, \R)$ case, this is achived by defining $T_\lambda f$ as 
\begin{eqnarray}\label{exp-1} 
T_\lambda f(a_t )=Q_{\lambda}(a_t)\int_t^\infty f(a_s) P_\lambda(a_s)2\sinh 2s \,ds - P_\lambda(a_t)\int_t^\infty f(a_s)Q_{\lambda}(a_s)2\sinh 2s \,ds
\end{eqnarray} 
and using some formule involving $P_\lambda$ and $Q_\lambda$ (\cite[(2-9) (2-10)]{Ben-2}). Instead, intially we simply define  $T_\lambda f:=\widehat{f}(\lambda)b_\lambda -f*b_\lambda.$ This is well defined as it will be shown that $b_\lambda$ is a sum of $L^1$ and $L^p$ (for some $p<2$) function. Hence it is straight forward to see that $\what{T_\lambda f}(\xi)=\frac{\what{f}(\lambda)-\what{f}(\xi)}{\xi^2-\lambda^2}, \xi\in\R$. But we must show that $T_\lambda f\in L^1$. For that we  need to express $T_\lambda f$ as in (\ref{exp-1}). Using the following property of $b_\lambda$ (Lemma \ref{lemma-functional equation of b-lambda}): \begin{eqnarray*}\int_K b_\lambda(a_ska_t)dk=
\begin{cases}
b_\lambda(a_t)\phi_\lambda 
(a_s)\hspace{3mm}\textup{if}
\hspace{1mm}t> s\geq 0,\\
b_\lambda(a_s)\phi_\lambda(a_t)\hspace{3mm}\textup{if}
\hspace{1mm}s > t\geq 0.
\end{cases}
\end{eqnarray*}
we show that
\begin{eqnarray}\label{exp-2}
T_\lambda f(a_t)=b_\lambda(a_t)\int_t^\infty f(a_s)\phi_\lambda(a_s)\Delta(s)ds-\phi_\lambda(a_t)
\int_t^\infty f(a_s)b_\lambda(a_s)\Delta(s)ds.
\end{eqnarray}

Using certain properties of hypergeometric functions we estimate $b_\lambda$. Once we have the estimate of $b_\lambda$,  $\|T_\lambda f\|_1$ can be estimated from the formula (\ref{exp-2}), following the similar method in Ben Natan et. al.. Consequently we get the necessary estimate for $\mathcal R[g]$.
In $\mathrm{SL}(2, \R)$ case, $\mathcal R[g]$ satisfies the following estimates: 
$$\left|\mathcal{R}[g](\lambda)\right|\leq C||g||_\infty\frac{1}{d(\lambda,\partial S_1)},\,\,\, |\Im\lambda|>\rho$$
and 
 $$\left|\widehat{f}(\lambda)\mathcal{R}[g](\lambda)\right|\leq C||f||_1||g||_\infty\frac{1}{d(\lambda,\partial S_1)}, \,\,\, |\Im\lambda|<\rho, f\in I$$ where the constant $C$ is independent of $f\in I$. 
Then using a log-log type theorem (\cite[Theorem 5.3]{Ben-2}) it follows that $\mathcal R[g]$ is bounded.
But in general case, an extra polynomial in $\lambda$ appears in the right hand side of the both estimates above. This  difficulty can be removed by a  mild modification of  the log-log type theorem (see Lemma \ref{loglog theorem}). 
That will imply that $\mathcal R[g]$ is a polynomial. Finally it will be proved that $\mathcal R[g]=0$.

\section{Preliminaries}
Most of our notation related to the semisimple
Lie groups and hypergeometric functions is standard and  can be found for example in
\cite{Helga-GGA, GV} and \cite{Koornwinder} respectively. 
We shall follow the standard practice of using the letter $C$ for constants, whose value may change from one line to another. Everywhere in this article the symbol
$f_1\asymp f_2$ for two positive expressions $f_1$ and $f_2$ means that there are positive constants $C_1, C_2$ such
that $C_1f_1\leq f_2\leq C_2f_1$. For a complex number $z$, we will use $\Re z$ and $\Im z$ to
denote respectively the real and imaginary parts of $z$. For $0<p\leq 2$, we let $S_p=\{\lambda\in\C\mid |\Im\lambda|\leq (\frac 2p-1)\rho\}$.

Let $G$ be a connected noncompact semisimple real rank $1$ Lie group with finite centre with Lie algebra $\mathfrak g$.
We fix a Cartan decomposition $\mathfrak g= \mathfrak k+ \mathfrak p$. Let
 $\mathfrak a$ be a maximal abelian subspace of $\mathfrak p$. Since $G$ is of real rank one, we have $\dim\mathfrak a=1$. We denote the real dual of $\mathfrak a$ by  $\mathfrak a^*$. Let $\Sigma\subset \mathfrak a^*$ be the subset of nonzero roots of the pair $(\mathfrak g,\mathfrak a)$. We recall that either $\Sigma=\{-\alpha, \alpha\}$ or $\{-2\alpha, -\alpha, \alpha, 2\alpha\}$ where $\alpha$ is a positive root and the Weyl group $W$ associated  to $\Sigma$ is $\{{\rm Id}, -{\rm Id}\}$ where Id is the identity operator.

 Let $m_1=\dim \mathfrak g_\alpha$ and $m_2=\dim \mathfrak g_{2\alpha}$ where $\mathfrak g_\alpha$ and
 $\mathfrak g_{2\alpha}$ are the root spaces corresponding to $\alpha$ and $2\alpha$. As usual then $\rho=\frac 12(m_1+2m_2)\alpha$ denotes the half sum of the positive roots. Let  $H_0$ be the unique element in $\mathfrak a$ such that $\alpha(H_0)=1$ and through this we identify $\mathfrak a$ with $\R$ as $t\leftrightarrow tH_0$. Then $\mathfrak a_+= \{H\in \mathfrak a\mid \alpha(H)>0\}$ is identified with the set of positive real numbers. We also identify $\mathfrak a^*$ and its complexification $\mathfrak a^*_\C$ with $\R$  and $\C$ respectively by $t\leftrightarrow t\alpha$ and  $z\leftrightarrow z\alpha$, $t\in \R$, $z\in \C$. By abuse of notation we will denote $\rho(H_0)=\frac 12(m_1+2m_2)$ by $\rho$.

 Let $\mathfrak n= \mathfrak g_\alpha+\mathfrak g_{2\alpha}$, $N=\exp \mathfrak n$,
 $K=\exp \mathfrak k$, $A=\exp \mathfrak a$, $A^+=\exp \mathfrak  a_+$ and $\overline{A^+}=\exp \overline{\mathfrak  a_+}$. Then $K$ is a maximal compact subgroup of $G$, $N$ is a nilpotent Lie group and $A$ is a one dimensional vector subgroup identified
  with $\R$.
 Precisely $A$ is parametrized by $a_s=\exp(sH_0)$.  The Lebesgue measure on $\R$ induces the Haar measure on $A$ as $da_s=ds$. Let $M$ be the centralizer of $A$ in $K$. Let  $X=G/K$ be the Riemannian symmetric space of noncompact type associated with the pair $(G, K)$.
Let $\sigma(x)=d(xK, eK)$ where $d$ is the distance function of $X$ induced by the Killing form on $\mathfrak g$.

The group $G$ has  the Iwasawa decomposition
$G=KAN$ and the Cartan decomposition $G=K\overline{A^+}K$.
Using the Iwasawa decomposition  we write an element $x\in G$ as $K(x)\exp H(x)N(x)$.
Let $dg$, $dn$, $dk$ and $dm$ be the Haar measures of $G$, $N$,
$K$ and $M$ respectively where $\int_K\,dk=1$ and $\int_M\,dm=1$.
We have the following integral formulae corresponding to the Cartan
decomposition, which holds for any integrable function:

\begin{equation}
\int_Gf(g)dg=\int_K\int_{\R^+}\int_K f(k_1a_tk_2) \Delta(t)\,dk_1\,dt\,dk_2. \label{polar}
\end{equation} 
where $\Delta(t)=(2\sinh t)^{m_1 + m_2}(2\cosh t)^{m_2}$

A function $f$ is called $K$-biinvariant if $f(k_1xk_2)=f(x)$ for all $x\in G, k_1, k_2\in K$.  We denote the set of all $K$-biinvariant functions in $L^1(G)$ by $L^1(G//K)$.

Let $\mathbb D(G/K)$ be the algebra of $G$-invariant differential operators on $G/K$. The elementary spherical functions $\phi$ are $C^\infty$ functions and are joint eigenfunctions of all $D\in\mathbb D(G/K)$ for some complex eigenvalue $\lambda(D)$. That is $$D\phi=\lambda(D)\phi, D\in\mathbb D(G/K).$$They are parametrized by $\lambda\in\C$. The algebra $\mathbb D(G/K)$ is generated by the Laplace-Beltrami operator $ L$. Then we have, for all $\lambda\in\C, \phi_\lambda$ is a $C^\infty$ solution of  
\begin{equation}\label{phi-lambda}
L\phi=-(\lambda^2 + \rho^2)\phi.
\end{equation}
The $A$-radial part of the Laplace-Beltrami operator is given by 
\begin{equation} 
L_Af(a_t):=\frac{d^2}{dt^2}f (a_t)+\left((m_1+ m_2)\coth t + m_2\tanh t\right)\frac{d}{dt}f(a_t), t>0. 
\end{equation}
Therefore equation (\ref{phi-lambda}) reduces to 
\begin{equation}\label{diff-hyper}
\frac{d^2\phi}{dt^2}+\left((m_1+ m_2)\coth t + m_2\tanh t\right)\frac{d\phi}{dt} + (\lambda^2 + \rho^2)\phi=0, t>0.
\end{equation}
 The change of variable $z:=-\sinh^2 t$ reduces the equation above into the hypergeometric differential equation
\begin{equation}\label{hyper-geo}
z(1-z)\frac{d^2\psi}{dz^2} + [c-(1 + a + b)z]\frac{d\psi}{dz}-abz=0
\end{equation}
with $a=\frac{\rho-i\lambda}{2}, b=\frac{\rho+ i\lambda}{2}, c=\frac{m_1 + m_2 +1}{2}$.
Therefore we have, \begin{eqnarray}\label{phi}\phi_\lambda(a_t)=_2F_1\left(\frac{\rho-i\lambda}{2}, \frac{\rho+ i\lambda}{2}; \frac{m_1 + m_2 +1}{2}; -\sinh^2t\right)
\end{eqnarray}
which is regular at $0$.

Also for $\lambda\not=-i, -2i, \cdots$, another solution $\Phi_\lambda$ of (\ref{diff-hyper}) (or, (\ref{phi-lambda})) on $(0,\infty)$ is given by (see \cite[\S 2.9, (9) (11)]{Erdelyi-1}), 
\begin{eqnarray}\label{formula-Phi-1}
\Phi_\lambda(a_t)&=& (2\cosh t)^{i\lambda-\rho}  \,_2F_1(\frac{\rho-i\lambda}{2}, \frac{m_1 +2}{4} -\frac{i\lambda}{2}; 1-i\lambda; \cosh^{-2} t) \\
&=& (2\sinh t)^{i\lambda-\rho}  \,_2F_1(\frac{\rho-i\lambda}{2}, \frac{-m_1 +2}{4} -\frac{i\lambda}{2}; 1-i\lambda; -\sinh^{-2} t)
\label{formula-Phi-2}
\end{eqnarray}

This solution has singularity at $t=0$. This function $\Phi_\lambda$ has a series representation, called {\em Harish-Chandra series}, for $t>0$. Through the Cartan decomposition we extend $\Phi_\lambda$ as a $K$-biinvariant function on $G\setminus K$. Therefore $\Phi_\lambda$ is a solution of (\ref{phi-lambda}) on $G\setminus K$.

We have for $t\rightarrow\infty$, \begin{equation}\label{est-Phi}
\Phi_\lambda(a_t)=e^{(i\lambda-\rho)t}(1 + O(1)).
\end{equation}
For $\lambda\in \C\setminus i\Z$, $\Phi_\lambda$ and $\Phi_{-\lambda}$ are two linearly independent solutions of (\ref{diff-hyper}). So $\phi_\lambda$ is a linear combination of both $\Phi_\lambda$ and $\Phi_{-\lambda}$. We have $$\phi_\lambda=c(\lambda)\Phi_\lambda + c(-\lambda)\Phi_{-\lambda}$$ where $c(\lambda)$ is the Harish-Chandra $c$-function given by $$c(\lambda)=\frac{2^{\rho-i\lambda} \Gamma(\frac{m_1 + m_2 +1}{2})\Gamma(i\lambda)}{\Gamma(\frac{\rho+ i\lambda}{2}) \Gamma(\frac{m_1+ 2}{4} + \frac{i\lambda}{2})}.$$
It is normalized such that $c(-i\rho)=1$.

Hence, for $\Im\lambda<0$ and as $t\rightarrow\infty$, 
\begin{equation}\label{est-phi}
\phi_\lambda(a_t)=c(\lambda)e^{(i\lambda-\rho)t}(1 + O(1)).
\end{equation}

For any $\lambda\in \C$  the elementary spherical function $\phi_\lambda$ has the following integral representation
$$\phi_\lambda(x)=\int_K e^{-(i\lambda+\rho)H(xk)}\,dk \text{ for all } x\in G.$$ We have the following properties of $\phi_\lambda$:
\begin{enumerate}
\item $\phi_\lambda$ is a $K$-biinvariant function.
\item $\phi_\lambda=\phi_{-\lambda}$, $\phi_\lambda(x)=\phi_\lambda(x^{-1})$. 
\item For fixed $x\in G$, $\lambda\mapsto\phi_\lambda(x)$ is an entire function. 
\item $|\phi_\lambda(x)|\leq 1$ for all $x\in G$ if and only if $\lambda\in S_1$.

\end{enumerate}
The spherical transform $\what{f}$ of a function $f\in L^1(G)$ is defined by the formula
$$\what{f}(\lambda)=\int_Gf(x)\phi_\lambda(x^{-1})\,dx \text{ for all } \lambda\in S_1.$$
Then it follows that $\what{f}$ is  analytic on $S_1^\circ$, continuous on $S_1$. Also $|\what{f}(\lambda)|\rightarrow 0$ for $|\lambda|\rightarrow\infty$ in $S_1$.

Let $C_c^\infty(G//K)$ be the set of all $C^\infty$ compactly supported $K$-biinvariant functions on $G$.  Also let $PW(\C)$ be the set of all entire functions $h:\C\rightarrow \C$ such that for each $N\in \N$, $$\sup_{\lambda\in\C}(1 +|\lambda|)^N |h(\lambda)| e^{-|\Im\lambda|}<\infty$$ and let $PW(\C)_e$ be the set of all even functions in $PW(\C)$. 

Let $\mathcal U(\mathfrak g)$ be the universal
enveloping algebra of $\mathfrak{g}$. For $0<p\leq 2$, let $\mathcal C^p(G//K)$ be the set of all $f\in C^\infty(G//K)$ such that for all  $D_1,D_2\in \mathcal U(\mathfrak g)$, for all $N\in\mathbb N$
$$\sup_{t\geq 0}
|f(D_1;a_t;D_2)|(1+t)^N e^{\frac{2}{p}\rho t}<\infty.$$

Here $f(D_{1};a_{t};D_{2})$ is the usual left and right derivatives of $f$ by $D_1$ and $D_2$ evaluated at $a_t$.
It is topologized by these seminorms and it is a Fr\'{e}chet space.

We define
$\mathcal S(S_p)$ to be be the set of all functions
$h:S_p\rightarrow \mathbb C$ which are continuous on $S_p$,
holomorphic on $S_p^\circ$ (when $p=2$ then the function is simply
$C^\infty$ on $S_2=\R$) and satisfies for all $r,m\in \mathbb N\cup\{0\}$, $$\sup_{\lambda\in
S_p}(1+|\lambda|^{r})|\frac{d^{m}}{d\lambda^{m}}h(\lambda)|<
\infty.$$ Let $\mathcal
S(S_p)_e$  denote the subspaces of
$\mathcal S(S_p)$ consisting of even  functions. Topologized by the seminorms above it  can be
verified that $\mathcal S(S_p), \mathcal S(S_p)_e$  are Fr\'{e}chet spaces.

Then we have the following Paley-Wiener and Schwartz space isomorphism theorems:

\begin{theorem}\label{Paley-Wiener}
The function $f\mapsto \what{f}$ is a topological isomorphism between $C_c^\infty(G//K)$ and $PW(\C)_e$. Also it is a topological isomorphism between $\mathcal C^p(G//K)$ and $\mathcal S(S_p)_e$.
\end{theorem}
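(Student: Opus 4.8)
To prove Theorem~\ref{Paley-Wiener} the plan is to reduce both isomorphisms to classical one--dimensional Fourier analysis via the Abel (horocyclic) transform. For $f\in L^1(G//K)$ put $\mathcal Af(t)=e^{\rho t}\int_N f(a_tn)\,dn$. Using the Iwasawa decomposition $G=KAN$, the $K$-biinvariance of $f$, and the integral representation $\phi_\lambda(x)=\int_Ke^{-(i\lambda+\rho)H(xk)}\,dk$, a routine unfolding shows that $\mathcal Af$ is an even function on $\R$ (equivalently this reflects the symmetry $\phi_\lambda=\phi_{-\lambda}$) and that
\[
\what f(\lambda)=\int_\R \mathcal Af(t)\,e^{-i\lambda t}\,dt=\mathcal F(\mathcal Af)(\lambda),
\]
where $\mathcal F$ is the Fourier transform on $\R$. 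In real rank one $\mathcal A$ is an explicit operator on functions of $t$: it is a composition of two Riemann--Liouville / Weyl type fractional integral operators with kernels built from powers of $2\cosh t$ and $2\sinh t$ (one factor attached to the multiplicity $m_1$, the other to $m_2$), and both $\mathcal A$ and $\mathcal A^{-1}$ are completely understood from the theory of the Jacobi transform. The plan is then: (i) establish the mapping properties of $\mathcal A$ on the relevant space; (ii) apply the corresponding Euclidean statement for $\mathcal F$; (iii) invoke the open mapping theorem to turn the continuous bijection so obtained into a topological isomorphism.

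For the first isomorphism I would first check directly that the spherical transform carries $C_c^\infty(G//K)$ continuously into $PW(\C)_e$: evenness follows from $\phi_\lambda=\phi_{-\lambda}$, holomorphy from the entirety of $\lambda\mapsto\phi_\lambda(x)$ together with compact support, and the required Paley--Wiener bounds from the elementary estimate $|H(xk)|\le\sigma(x)$ combined with differentiation under the integral sign, powers of $\lambda$ being absorbed by $G$-invariant differential operators acting on $f$. Injectivity is immediate (from the inversion formula, or from injectivity of $\mathcal F$ and of the Volterra--type operator $\mathcal A$). Surjectivity is where the factorization is used: $\mathcal A$ is a continuous bijection of $C_c^\infty(G//K)$ onto the even functions in $C_c^\infty(\R)$ that does not enlarge the support radius (the kernels of $\mathcal A$ and $\mathcal A^{-1}$ are supported in $\{(t,s):|s|\ge|t|\}$), while the classical Paley--Wiener theorem identifies $\mathcal F$ as a bijection of $C_c^\infty(\R)_e$ onto $PW(\C)_e$; composing gives a continuous bijection, and the open mapping theorem---applied on the Fr\'echet subspaces of functions with support, resp.\ exponential type, bounded by a fixed $R$, and then passed to the inductive limit---finishes the argument.

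For the second isomorphism the scheme is identical with $\mathcal C^p(G//K)$ in place of $C_c^\infty(G//K)$. One shows that $\mathcal A$ is a topological isomorphism of $\mathcal C^p(G//K)$ onto the space of even $C^\infty$ functions $F$ on $\R$ with $\sup_{t\in\R}(1+|t|)^r|F^{(m)}(t)|\,e^{(\frac2p-1)\rho|t|}<\infty$ for all $r,m$; the weight shift by $\rho$ is produced by the factor $e^{\rho t}$ in $\mathcal A$, and the content lies in transporting derivatives and polynomial factors through the fractional integral operators constituting $\mathcal A^{\pm1}$. It then remains to identify the Fourier image of this weighted Schwartz space with $\mathcal S(S_p)_e$: for $p=2$ this is the even part of $\mathcal F(\mathcal S(\R))=\mathcal S(\R)$, and for $p<2$ it is the Paley--Wiener--Schwartz theorem for the horizontal strip $S_p$---an even Schwartz function together with all its derivatives decays like $e^{-(\frac2p-1)\rho|t|}$ with arbitrary polynomial gain precisely when its Fourier transform extends holomorphically to $S_p^\circ$, continuously to $S_p$, and is rapidly decreasing there together with all $\lambda$-derivatives, which are exactly the defining conditions of $\mathcal S(S_p)_e$. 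Here too the forward inclusion (the spherical transform maps $\mathcal C^p(G//K)$ into $\mathcal S(S_p)_e$) can be done by hand, using holomorphy of $\lambda\mapsto\phi_\lambda(x)$, the tempered estimate for $\phi_\lambda$ on $S_1$ together with its extensions to wider strips via $\phi_\lambda=c(\lambda)\Phi_\lambda+c(-\lambda)\Phi_{-\lambda}$ and $\Phi_\lambda(a_t)=e^{(i\lambda-\rho)t}(1+O(1))$, plus differentiation under the integral; the substantive part is again surjectivity, supplied by $\mathcal A$ together with the Euclidean input, after which the open mapping theorem applies.

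The main obstacle throughout is the assertion that $\mathcal A$ (equivalently $\mathcal A^{-1}$) is a topological isomorphism at the level of the $L^p$-Schwartz spaces: one must carry the weight $e^{(\frac2p-1)\rho|t|}$, every derivative, and every polynomial factor $(1+|t|)^r$ through the (inverse) fractional integral operators with no loss, and on the transform side produce holomorphy and uniform rapid decay of $\mathcal F F$ and of each of its $\lambda$-derivatives up to the boundary of $S_p$---the boundary being the delicate locus, where one must invoke the splitting into the two Harish--Chandra solutions $\Phi_{\pm\lambda}$ and the precise asymptotics of $c(\lambda)$. In the compactly supported case the analogous point is far milder, since one needs only that $\mathcal A^{\pm1}$ do not enlarge supports, which is read off the explicit kernels. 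Both technical cores are classical: the Paley--Wiener theorem is due to Helgason and Gangolli and the $L^p$-Schwartz isomorphism to Trombi--Varadarajan, and in the rank one parametrization used here everything is contained in Koornwinder's treatment of the Jacobi transform, so within the present paper it is legitimate simply to cite these sources.
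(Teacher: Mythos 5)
The paper states this theorem without proof, treating it as a classical result (the Helgason--Gangolli Paley--Wiener theorem and the Trombi--Varadarajan $L^p$-Schwartz isomorphism, available in the rank one/Jacobi setting from the cited references \cite{Helga-GGA, GV, Koornwinder}). Your outline — factoring $\what f=\mathcal F\circ\mathcal A$ through the Abel transform, using support preservation resp.\ weighted-Schwartz mapping properties of $\mathcal A^{\pm 1}$, the Euclidean Paley--Wiener(--Schwartz) theorem for the strip, and the open mapping theorem — is exactly how those references prove it, so your proposal is correct and takes essentially the same route the paper implicitly relies on.
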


\noindent{\bf Hypergeometric function:}  We need the following properties of the hypergeometric functions:
\begin{enumerate}
\item[(a)] The hypergeometric function has the following integral representation for $\Re c>\Re b>0$,
\begin{eqnarray}\label{integral representation of hypergeometric function}
_2F_1(a,b;c;z)=\frac{\Gamma(c)}{\Gamma(b)\Gamma(c-b)}\int_0^1 s^{b-1}(1-s)^{c-b-1}(1-sz)^{-a}ds,\hspace{3mm}|z|<1.
\end{eqnarray}

\item [(b)] \begin{eqnarray}\label{properties of hypergeometric 
function-1}
_2F_1(a,b;c;z)=(1-z)^{-b}{}_2F_1\left(c-a,b;c;\frac{z}{z-1}\right),\hspace{3mm} z\in\mathbb{C}\setminus[1,\infty).
\end{eqnarray}
(see \cite[p. 247, eqn. (9.5.2)]{Lebedev})
\item [(c)] \begin{eqnarray}\label{properties of hypergeometric function-2}
\nonumber c(c+1){}_2F_1(a,b;c;z)&=& c(c-a+1)_2F_1(a,b+1;c+2;z)\\&&+
a\left[c-(c-b)z\right]{}
 _2F_1(a+1,b+1;c+2;z), \hspace{3mm} z\in\mathbb{C}\setminus[1,\infty). 
\end{eqnarray}
(see \cite[p. 240, eqn. (9.1.7)]{Lebedev})

\item[(d)] \begin{eqnarray}\label{properties of hypergermetric function-3}
\nonumber \int_0^1 x^{d-1}(1-x)^{b-d-1}{}_2F_1(a,b;c;x)dx &=&\frac{\Gamma(c)\Gamma(d)\Gamma(b-d)\Gamma(c-a-d)}{\Gamma(b)\Gamma(c-a)\Gamma(c-d)},\\
&&\textup{if}\hspace{1mm}\Re d>0, \Re (b-d)>0,\Re(c-a-d)>0.
\end{eqnarray}
(see \cite[p. 821, 7.512 (3)]{Grad})

\end{enumerate}

\section{The functions $b_\lambda$ : Representatives of $B_\lambda, \Im\lambda>\rho$}
\noindent Let $\mathbb{C_+}=\{z\in\C\mid \Im z>0\}$  be the open upper half plane in $\mathbb{C}$.

For $\lambda\in \mathbb{C}_+$, we define 
\begin{eqnarray}\label{definition of b-lambda}
b_\lambda(a_t):=\frac{i}{2\lambda c(-\lambda)}\Phi_\lambda(a_t), t>0
\end{eqnarray}

where $c$ is the Harish-Chandra $c$-function. We note that $b_\lambda$  is positive when $\lambda=i\eta$ with $\eta>0$. 

\vspace{.5cm}
In this section we show that $b_\lambda$ can be written as a sum of $L^1$ and $L^p (p<2)$ functions. We also show that $b_\lambda\in L^1(G//K)$ if and only if $\Im\lambda>\rho$ (Lemma \ref{lemma-2}), and  estimate their $L^1$-norm (Lemma \ref{lemma- L-1 norm of b-lambda}). We prove that for $\lambda\in\C_+$, $\what{b_\lambda}(\xi)=\frac{1}{\xi^2-\lambda^2}, \xi\in\R$ (Lemma \ref{lemma-spherical transform of b-lambda}).  Finally we prove that $\{b_\lambda\mid\Im\lambda>\rho\}$ is dense in $L^1(G//K)$ (Lemma \ref{denseness of b-lambda}).

\vspace{.5cm}

From the Frobenious method (\cite[Chapter 4, \S 8]{Coddington}) it is known that for each $\lambda\in\C_+$, $b_\lambda(a_t)$ is asymptotically equal to $t^{-(m_1+m_2-1)}$ if $m_1+m_2>1$ and $\log \frac{1}{t}$ if $m_1+m_2=1$ as $t\rightarrow 0+$. But for our purpose we need the estimates also with respect to $\lambda$.

\begin{lemma}\label{lemma- estimates of b lambda}
Let $\lambda\in \mathbb{C}_+$. Then $b_\lambda$ satisfies the following estimates near $0$ and $\infty$.

\noindent{(a)} There is a positive constant $C$ and a natural number $N$ such that for all $t\in(0,1/2]$,
\begin{eqnarray*}
|b_\lambda(a_t)|\leq \begin{cases}C (1+|\lambda|)^Nt^{-(m_1+m_2-1)},\hspace{3mm}\textup{if}\hspace{1mm} m_1+m_2>1\\
C \log \frac{1}{t}\hspace{33.5mm}\textup{if}\hspace{1mm}m_1+m_2=1.
\end{cases}
\end{eqnarray*}
\noindent (b) There is a positive constant $C$ and a natural number $M$ such that for all $t\in [1/2,\infty]$,
$$
|b_\lambda(a_t)|\leq C (1+|\lambda|)^Me^{-(\Im\lambda+\rho)t}.
$$
\end{lemma}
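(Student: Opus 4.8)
The plan is to extract both estimates directly from the two hypergeometric representations (\ref{formula-Phi-1}) and (\ref{formula-Phi-2}) of $\Phi_\lambda$, together with the explicit formula $c(\lambda)=\frac{2^{\rho-i\lambda}\Gamma(\frac{m_1+m_2+1}{2})\Gamma(i\lambda)}{\Gamma(\frac{\rho+i\lambda}{2})\Gamma(\frac{m_1+2}{4}+\frac{i\lambda}{2})}$, keeping careful track of the dependence on $\lambda$. Throughout, $\lambda\in\C_+$, so $\Im\lambda>0$ and $i\lambda$ stays away from the nonpositive reals; the Gamma factors in $c(-\lambda)=\frac{2^{\rho+i\lambda}\Gamma(\frac{m_1+m_2+1}{2})\Gamma(-i\lambda)}{\Gamma(\frac{\rho-i\lambda}{2})\Gamma(\frac{m_1+2}{4}-\frac{i\lambda}{2})}$ are therefore all evaluated at points with controlled argument, and by Stirling's formula the prefactor $\frac{i}{2\lambda c(-\lambda)}$ is bounded by a polynomial in $(1+|\lambda|)$ uniformly on $\C_+$ (one checks that the poles of $1/c(-\lambda)$, i.e. the zeros of $\Gamma(\frac{\rho-i\lambda}{2})\Gamma(\frac{m_1+2}{4}-\frac{i\lambda}{2})$, lie in $\Im\lambda<0$, so $1/c(-\lambda)$ is holomorphic and of polynomial growth on $\C_+$). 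This reduces everything to estimating the two ${}_2F_1$'s and the explicit powers of $\cosh t$ and $\sinh t$.

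For part (b), the natural tool is representation (\ref{formula-Phi-1}): $\Phi_\lambda(a_t)=(2\cosh t)^{i\lambda-\rho}\,{}_2F_1\!\left(\frac{\rho-i\lambda}{2},\frac{m_1+2}{4}-\frac{i\lambda}{2};1-i\lambda;\cosh^{-2}t\right)$. For $t\ge 1/2$ we have $\cosh^{-2}t\le\cosh^{-2}(1/2)<1$ bounded away from $1$, so the series defining the ${}_2F_1$ converges; but I must bound it uniformly in $\lambda$. Here I would use the integral representation (\ref{integral representation of hypergeometric function}) — valid since $\Re(1-i\lambda)>\Re(\frac{m_1+2}{4}-\frac{i\lambda}{2})>0$ for $\Im\lambda>0$ — to write the ${}_2F_1$ as $\frac{\Gamma(1-i\lambda)}{\Gamma(\frac{m_1+2}{4}-\frac{i\lambda}{2})\Gamma(1-\frac{m_1+2}{4}-\frac{i\lambda}{2})}\int_0^1 s^{b-1}(1-s)^{c-b-1}(1-sz)^{-a}\,ds$ with $z=\cosh^{-2}t$, and estimate $|(1-sz)^{-a}|\le (1-z)^{-\Re a}\le C$ (since $\Re a=\frac{\rho+\Im\lambda}{2}>0$ and $1-z$ is bounded below), the $s$-integral by a Beta function, and the ratio of Gamma functions by Stirling (it is of polynomial growth in $\lambda$). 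Absorbing $(2\cosh t)^{-\rho}\le C$ and $|(2\cosh t)^{i\lambda}|=e^{-\Im\lambda\,(\log 2\cosh t)}\le C e^{-\Im\lambda\, t}$ for $t\ge 1/2$, and combining with the prefactor estimate, gives $|b_\lambda(a_t)|\le C(1+|\lambda|)^M e^{-(\Im\lambda+\rho)t}$; the extra $e^{-\rho t}$ comes from the genuine $(2\cosh t)^{-\rho}\asymp e^{-\rho t}$ decay.

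For part (a), the relevant representation is (\ref{formula-Phi-2}): $\Phi_\lambda(a_t)=(2\sinh t)^{i\lambda-\rho}\,{}_2F_1\!\left(\frac{\rho-i\lambda}{2},\frac{-m_1+2}{4}-\frac{i\lambda}{2};1-i\lambda;-\sinh^{-2}t\right)$, but now $-\sinh^{-2}t\to-\infty$ as $t\to0+$, so this form is not directly usable near $0$; the singularity at $t=0$ is the crux. The standard fix is to apply the connection formula (\ref{properties of hypergeometric function-1}) with $z=-\sinh^{-2}t$, giving $(1-z)^{-b}\,{}_2F_1(c-a,b;c;\frac{z}{z-1})$ where $\frac{z}{z-1}=\frac{\sinh^{-2}t}{1+\sinh^{-2}t}=\frac{1}{\cosh^2 t}=\cosh^{-2}t\in(0,1)$, which converts the argument back into the unit disc and in fact reproduces (\ref{formula-Phi-1}) — consistent, but it tells us the ${}_2F_1$ part is bounded near $t=0$ while $(2\sinh t)^{i\lambda-\rho}$ blows up like $(2\sinh t)^{-\rho}$, which is $t^{-\rho}$, not $t^{-(m_1+m_2-1)}$. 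So a single term is not enough: near $t=0$ the two solutions $\Phi_\lambda$ and $\Phi_{-\lambda}$ of the second-order ODE (\ref{diff-hyper}) are \emph{both} singular (the indicial exponents at $t=0$ are $0$ and $-(m_1+m_2-1)$ when $m_1+m_2>1$, with a logarithmic term when $m_1+m_2=1$, exactly matching the Frobenius analysis already quoted from \cite{Coddington}). The right approach is therefore: write $b_\lambda=\frac{i}{2\lambda c(-\lambda)}\Phi_\lambda$ and use $\Phi_\lambda$'s local behaviour at $0$ via the hypergeometric equation (\ref{hyper-geo}) in the variable $z=-\sinh^2 t$ near $z=0$ — there $\Phi_\lambda$ corresponds to the solution with the singular exponent, whose leading coefficient and the coefficients of its Frobenius expansion are ratios of Gamma functions in the parameters $a,b,c$ (hence polynomial in $|\lambda|$ by Stirling), so that $|\Phi_\lambda(a_t)|\le C(1+|\lambda|)^N t^{-(m_1+m_2-1)}$ on $(0,1/2]$ for $m_1+m_2>1$ and $\le C(1+|\lambda|)^N\log\frac1t$ for $m_1+m_2=1$. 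Multiplying by the polynomially-bounded prefactor and noting the prefactor's $\lambda$-growth can be absorbed into $N$ (and that for $m_1+m_2=1$ one even gets a clean $C\log\frac1t$, the $\lambda$-dependence being swallowed since $\log\frac1t$ dominates and we may enlarge $C$ — more precisely one must check the $\lambda$-dependent constant stays bounded, which it does after dividing by $\log(1/t)\ge\log 2$), yields (a).

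The main obstacle is part (a): one must identify $\Phi_\lambda$ as precisely the Frobenius solution with the \emph{non-trivial} indicial exponent at $t=0$ and control the coefficients of its local expansion uniformly in $\lambda$ — equivalently, track how the connection coefficients between $\{\phi_\lambda\}$ (regular at $0$) and $\{\Phi_\lambda,\Phi_{-\lambda}\}$ behave as functions of $\lambda$. The cleanest route is to avoid re-deriving Frobenius from scratch: use the known asymptotics already cited, combined with the Gamma-function structure of the connection formula coefficients, and use Stirling to get the polynomial-in-$\lambda$ bounds; the logarithmic case $m_1+m_2=1$ must be handled as a degenerate limit where the two indicial exponents coincide.
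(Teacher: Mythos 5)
Your pivot in part (a) rests on an arithmetic slip, and the route you abandon is in fact the correct one (and the paper's). Applying the Pfaff transformation (\ref{properties of hypergeometric function-1}) to (\ref{formula-Phi-2}) with $z=-\sinh^{-2}t$ produces not only the new argument $\frac{z}{z-1}=\cosh^{-2}t$ but also the prefactor $(1-z)^{-b}=(1+\sinh^{-2}t)^{\frac{m_1-2}{4}+\frac{i\lambda}{2}}$, whose modulus near $t=0$ contributes an extra $(\sinh t)^{-\frac{m_1-2}{2}+\Im\lambda}$. Combined with $|(2\sinh t)^{i\lambda-\rho}|=(2\sinh t)^{-\Im\lambda-\rho}$, the $\Im\lambda$'s cancel and the total exponent is $-\rho-\frac{m_1-2}{2}=-(m_1+m_2-1)$ --- exactly the claimed singularity, not $t^{-\rho}$. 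So no two-term Frobenius analysis is needed. Your replacement route is moreover incomplete: the connection problem at $z=0$ degenerates whenever $c=\frac{m_1+m_2+1}{2}$ is a positive integer, i.e. for every odd value of $m_1+m_2$ (not only $m_1+m_2=1$), where logarithmic terms enter the local expansion; and you never address the one genuinely nontrivial point, namely a bound for ${}_2F_1(a,b;c;x)$ as $x\to 1^-$ that is polynomial in $|\lambda|$. For the transformed parameters one has $\Re(c-a-b)>0$, and the paper obtains such a bound by induction on the contiguous relation (\ref{properties of hypergeometric function-2}) until the Euler integral applies (Lemma \ref{bound of hypergeometric functon near 1}); some such device is indispensable.

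There are two further quantitative gaps. In part (b) you assert $|(1-sz)^{-a}|\le(1-z)^{-\Re a}\le C$; but $\Re a=\frac{\rho+\Im\lambda}{2}$ is unbounded on $\C_+$, so with $z=\cosh^{-2}t$ this costs $(1-z)^{-\Re a}=(\tanh t)^{-(\rho+\Im\lambda)}$, which is of order $e^{c\,\Im\lambda}$ for $t$ near $1/2$ and is not absorbed by the surplus decay $(2\cosh t)^{-\Im\lambda-\rho}e^{(\Im\lambda+\rho)t}=(1+e^{-2t})^{-(\Im\lambda+\rho)}$: the net loss is $(1-e^{-2t})^{-(\Im\lambda+\rho)}$, exponential in $\Im\lambda$ on all of $[1/2,\infty)$. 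Your argument therefore yields an exponential rather than polynomial factor in $\lambda$; one should instead invoke the Harish-Chandra expansion behind (\ref{est-Phi}), with coefficients controlled uniformly in $\lambda\in\C_+$. Finally, for $m_1+m_2=1$ the statement requires a constant independent of $\lambda$, and ``dividing by $\log(1/t)$'' cannot remove a $(1+|\lambda|)^N$ factor. The mechanism that works is that in the Euler integral (\ref{integral representation of hypergeometric function}) applied to (\ref{formula-Phi-1}) the $\Gamma$-prefactor cancels exactly against the normalization $\frac{i}{2\lambda c(-\lambda)}$, and the residual $\Im\lambda$-dependence of the integrand, $\bigl(\tfrac{s(1-s)}{x+s}\bigr)^{\Im\lambda/2}$ after the substitution $s\mapsto 1-s$ with $x=\sinh^2 t$, is bounded by $1$; the $\log\frac1t$ then emerges from an integration by parts with a genuinely $\lambda$-free constant.
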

\begin{proof}
(a) Case-1 : Let $m_1+m_2>1$. By (\ref{formula-Phi-2}) and (\ref{properties of hypergeometric 
function-1}),
$$
\Phi_\lambda(a_t)=(2\sinh t)^{i\lambda-\rho}(1+\sinh ^{-2}t)^{\frac{m_1-2}{4}+\frac{i\lambda}{2}}{}_2F_1\left(\frac{-\rho+2}{2}-\frac{i\lambda}{2},\frac{-m_1+2}{4}-\frac{i\lambda}{2};1-i\lambda;\frac{1}{1+\sinh^2 t}\right),t>0.
$$
Now, for $0< t\leq 1/2$,
\begin{eqnarray*}
&&\left|(2\sinh t)^{i\lambda-\rho}(1+\sinh ^{-2}t)^{\frac{m_1-2}{4}+\frac{i\lambda}{2}}\right|\\
&=&  2^{-\Im\lambda-\rho}(\sinh t)^{-(m_1+m_2-1)}(1+\sinh^2t)^{\frac{m_1-2}{4}-\frac{\Im\lambda}{2}},\\
&\leq &C  2^{-\Im\lambda-\rho}(\sinh t)^{-(m_1+m_2-1)}\\
&\leq &C  2^{-\Im\lambda-\rho} t^{-(m_1+m_2-1)}\hspace{2mm}\textup{(since $\sinh t\geq t$)},
\end{eqnarray*}
where $C$ is a constant independent of $\lambda\in\mathbb{C}+$.
Again, letting
$$
a=a(\lambda)=\frac{-\rho+2}{2}-\frac{i\lambda}{2},\hspace{2mm}b=b(\lambda)=\frac{-m_1+2}{4}-\frac{i\lambda}{2},\hspace{2mm}c=c(\lambda)=1-i\lambda.
$$
we see that, for all $\lambda\in \mathbb{C}+$, $a,b,c$ satisfy the following
$$
\Re(c-b)>\frac{m_1+2}{4},\hspace{2mm}\Re(c-a-b)>\frac{m_1+m_2-1}{2},\hspace{2mm}\Re b>\frac{-m_1+2}{4}>-k+\frac{1}{2},
$$
where $k=\left[\frac{m_1}{4}+1\right]$.
Therefore, by Lemma \ref{bound of hypergeometric functon near 1} (in appendix), there is a $2k$-th degree polynomial $P$ of three variables such that 
$$
\left|{}_2F_1\left(a,b;c;\frac{1}{1+\sinh^2 t}\right)\right|\leq \frac{P(|a|,|b|,|c|)}{|(c)_{2k}|}\left|\frac{\Gamma(c+2k)}{\Gamma(b+k)\Gamma(c-b+k)}\right|,\hspace{3mm}t>0.
$$ 
Now it is easy to see that $P(|a|,|b|,|c|)$ is dominated by a $(2k)$-th degree polynomial in $|\lambda|$, where as $(c)_{2k}$ is $2k$-th degree polynomial in $\lambda$ which has no zero in the region $\Im\lambda\geq 0$. Therefore we conclude that
$$
\frac{P(|a|,|b|,|c|)}{|(c)_{2k}|}
$$
is bounded with bound independent of $\lambda\in\mathbb{C}+$. So we have the following estimate for $\Phi_\lambda$ :
$$
|\Phi_\lambda(a_t)|\leq C 2^{-\Im\lambda-\rho} t^{-(m_1+m_2-1)}\left|\frac{\Gamma(2k+1-i\lambda)}{\Gamma(k+\frac{-m_1+2}{4}-\frac{i\lambda}{2})\Gamma(k+\frac{m_1+2}{4}-\frac{i\lambda}{2})}\right|.
$$
Since 
$$
b_\lambda(a_t)=\frac{i}{2\lambda c(-\lambda)}\Phi_\lambda(a_t)= 2^{-\rho-1-i\lambda}\frac{\Gamma(\frac{\rho-i\lambda}{2})\Gamma(\frac{m_1+2}{4}-\frac{i\lambda}{2})}{\Gamma(\frac{m_1+m_2+1}{2})\Gamma(1-i\lambda)}\Phi_\lambda(a_t),
$$
the proof follows from the following fact which can be proved by the Starling approximation formula (see Lemma \ref{gamma-1} in appendix ) : let $A,B>0$ be fixed, then
$
\left|\frac{\Gamma(A+z)}{\Gamma(B+z)}\right|
$
is dominated by a polynomial on the region $\Re z>0$.

Case-2 : Let $m_1+m_2=1$. The proof is similar to the $\mathrm{SL}_2(\R)$ case (see \cite[Lemma 2.3]{Ben-2} ). Using equation (\ref{formula-Phi-1}) and the integral representation of the hypergeometric function (\ref{integral representation of hypergeometric function}), we  write $b_\lambda$, for $\lambda\in\mathbb{C}+$, as
$$
b_\lambda(a_t)=
2^{-2\rho-1
}\int_0^1s^{\frac{m_1+2}{4}-\frac{i\lambda}{2}-1}(1-s)^{\frac{-m_1+2}{4}-\frac{i\lambda}{2}-1}(\cosh^2t-s)^{-\frac{\rho-i\lambda}{2}}ds.
$$
Putting $\cosh^2 t=x+1$, and making the change of variable $s\rightarrow 1-s$, we get 
$$
b_\lambda(a_t)=
2^{-2\rho-1
}\int_0^1 (1-s)^{\frac{m_1+2}{4}-\frac{i\lambda}{2}-1} s^{\frac{-m_1+2}{4}-\frac{i\lambda}{2}-1}(x+s)^{-\frac{\rho-i\lambda}{2}}ds.
$$
Therefore $$|b_\lambda(a_t)|\leq 2^{-2\rho-1
}\int_0^1 (1-s)^{\frac{m_1-2}{4}+\frac{\Im\lambda}{2}} s^{\frac{-m_1-2}{4}+\frac{\Im\lambda}{2}}(x+s)^{-\frac{\rho+\Im\lambda}{2}}ds.$$Now we break the integration into two parts $I_1$ and $I_2$ on $(0, 1/2]$ and $[1/2, 1)$ respectively. It is easy to check that $I_2$ is bounded by a constant independent of $\lambda\in \C_+$.
On the other part, it is easy to check that $$I_1\leq C \int_0^{1/2}s^{\frac{-m_1-2}{4}+\frac{\Im\lambda}{2}}(x+s)^{\frac{m_1-2}{4}-\frac{\Im\lambda}{2}}ds,$$ since $m_1 + m_2=1$. Using integration by parts with $(x+s)^{\frac{m_1-2}{4}-\frac{\Im\lambda}{2}}$ as  first function and $s^{\frac{-m_1-2}{4}+\frac{\Im\lambda}{2}}$ as second function we can deduce that $$I_1\leq C+ C\log\left(1 + \frac{1}{2x}\right).$$ Since $x=\sinh^2 t$ the desired estime follows.

(b) We have $$b_\lambda(a_t)=\frac{i}{2\lambda c(-\lambda)}\Phi_\lambda(t)=\frac{2^{-\rho-i\lambda-1}}{\Gamma(\frac{m_1 + m_2 +1}{2})}\frac{\Gamma(\frac{\rho-i\lambda}{2})\Gamma(\frac{m_1+2}{4}-\frac{i\lambda}{2})}{\Gamma(1-i\lambda)}\Phi_\lambda(a_t).$$Then using Lemma \ref{estimate-c-function} (in appendix) and the asymptotic behavior of $\Phi_\lambda$ (see equation (\ref{est-Phi})) we have, $$|b_\lambda(a_t)| \leq C(1 + |\lambda|)^Me^{-(\rho+\Im\lambda)t}$$ for some $M>0$ .
\end{proof}

Using Lemma \ref{lemma- estimates of b lambda} and the fact that  $\Delta(t)\asymp t^{m_1+m_2}$ near $0$ and $\Delta(t)\asymp e^{2\rho t}$ near $\infty$, we have the following  Lemma:
\begin{lemma}\label{lemma-2}
\begin{enumerate}
\item [(a)] For all $\lambda\in\C_+$, $b_\lambda$ is locally integrable at $e$.
\item [(b)] For $\Im \lambda>\rho$, $b_\lambda\in L^1(G//K)$.
\item[(c)]For all $\lambda\in\C_+$ $b_\lambda$ is in $L^2$ outside neighbourhood of $e$. 
\item[(d)]For each $\lambda\in\C_+$, there exists $p<2$ (depending on $\lambda$) such that $b_\lambda$ is in $L^p$ outside neighbourhood of $e$. 
\end{enumerate}
\end{lemma}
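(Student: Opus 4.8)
The statements are all of the form ``$b_\lambda$ (or a power of it) is integrable on a prescribed region of $G$'', so the plan is simply to reduce each assertion to a one-dimensional integral via the polar decomposition~(\ref{polar}) and then plug in the two-sided estimates from Lemma~\ref{lemma- estimates of b lambda} together with the stated asymptotics $\Delta(t)\asymp t^{m_1+m_2}$ near $0$ and $\Delta(t)\asymp e^{2\rho t}$ near $\infty$. Concretely, since $b_\lambda$ is $K$-biinvariant, for $0<p\le 2$ and a measurable set of the form $G_\delta=K\{a_t:t>\delta\}K$ we have $\int_{G_\delta}|b_\lambda(g)|^p\,dg=\int_\delta^\infty |b_\lambda(a_t)|^p\,\Delta(t)\,dt$, up to the harmless constant coming from $\int_K dk=1$; and local integrability at $e$ amounts to finiteness of $\int_0^{1/2}|b_\lambda(a_t)|\,\Delta(t)\,dt$. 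The whole argument is then exponent bookkeeping, split into the region $t\in(0,1/2]$ (Lemma~\ref{lemma- estimates of b lambda}(a)) and $t\in[1/2,\infty)$ (Lemma~\ref{lemma- estimates of b lambda}(b)); on any compact subinterval $[\delta,1/2]$ the function $b_\lambda$ is continuous, hence bounded, so that piece never causes trouble.

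For~(a), on $(0,1/2]$ Lemma~\ref{lemma- estimates of b lambda}(a) gives $|b_\lambda(a_t)|\,\Delta(t)\le C(1+|\lambda|)^N t^{-(m_1+m_2-1)}\cdot t^{m_1+m_2}=C(1+|\lambda|)^N t$ when $m_1+m_2>1$, which is integrable near $0$, and $|b_\lambda(a_t)|\,\Delta(t)\le C\,t\log(1/t)$ when $m_1+m_2=1$, again integrable near $0$; this proves local integrability at $e$. For~(b) we combine~(a) with the behaviour at infinity: for $\Im\lambda>\rho$ and $t\ge 1/2$, Lemma~\ref{lemma- estimates of b lambda}(b) yields $|b_\lambda(a_t)|\,\Delta(t)\le C(1+|\lambda|)^M e^{-(\Im\lambda+\rho)t}e^{2\rho t}=C(1+|\lambda|)^M e^{(\rho-\Im\lambda)t}$, and since $\rho-\Im\lambda<0$ the integral over $[1/2,\infty)$ converges; together with~(a) this shows $b_\lambda\in L^1(G//K)$.

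For~(c) and~(d) we only work on a region $\{t>\delta\}$, so the estimate near $0$ is irrelevant and only Lemma~\ref{lemma- estimates of b lambda}(b) is used (after noting $b_\lambda$ is bounded on $[\delta,1/2]$). For~(c), with $p=2$ and any $\lambda\in\C_+$ we get $|b_\lambda(a_t)|^2\,\Delta(t)\le C(1+|\lambda|)^{2M} e^{-2(\Im\lambda+\rho)t}e^{2\rho t}=C(1+|\lambda|)^{2M}e^{-2(\Im\lambda)t}$, which is integrable on $[\delta,\infty)$ because $\Im\lambda>0$; hence $b_\lambda\in L^2$ off a neighbourhood of $e$. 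For~(d), with exponent $p$ the same computation gives an integrand bounded by $C(1+|\lambda|)^{pM}e^{(2\rho-p(\Im\lambda+\rho))t}$, so convergence at $\infty$ holds as soon as $2\rho-p(\Im\lambda+\rho)<0$, i.e. $p>\frac{2\rho}{\Im\lambda+\rho}$; since $\Im\lambda>0$ this lower bound is strictly less than $2$, so any $p$ with $\frac{2\rho}{\Im\lambda+\rho}<p<2$ works, proving~(d).

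There is essentially no serious obstacle here: the content of the lemma is entirely contained in Lemma~\ref{lemma- estimates of b lambda} and the asymptotics of $\Delta$. The only point requiring a little care is making sure the borderline case $m_1+m_2=1$ in~(a) still gives an integrable singularity (the extra $\log(1/t)$ is absorbed because $\Delta(t)\asymp t$ there), and, for~(d), observing that the threshold $\frac{2\rho}{\Im\lambda+\rho}$ is $<2$ precisely because $\Im\lambda>0$, which is exactly the hypothesis $\lambda\in\C_+$.
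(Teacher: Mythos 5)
Your proposal is correct and follows exactly the route the paper intends: the paper states the lemma as an immediate consequence of Lemma~\ref{lemma- estimates of b lambda} and the asymptotics $\Delta(t)\asymp t^{m_1+m_2}$ near $0$ and $\Delta(t)\asymp e^{2\rho t}$ near $\infty$, without writing out the details. Your exponent bookkeeping (including the logarithmic borderline case $m_1+m_2=1$ and the threshold $p>2\rho/(\Im\lambda+\rho)<2$ in part (d)) supplies precisely those details correctly.
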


Let $\lambda\in\C_+$. Then it follows that $b_\lambda$ is a $L^2$-tempered $K$-biinvariant distribution. Also we note that $b_\lambda$ can be written as a sum of $L^1$ and $L^p$ ($p<2$) function on $G$. Therefore its spherical transform is a continuous function on $\R$, vanishing at infinity.

Next we calculate the spherical transform of $b_\lambda$. For that we need the following lemma:

\begin{lemma}\label{integral 1}
If $\Im\lambda>\rho$, then $\int_0^\infty\Phi_\lambda(a_t)\Delta(t)dt=
\frac{2i\lambda c(-\lambda)}{\rho^2+\lambda^2}$.
\end{lemma}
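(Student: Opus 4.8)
The plan is to evaluate the integral directly by a change of variable that puts it into the form of the Euler-type formula (\ref{properties of hypergermetric function-3}). Using (\ref{formula-Phi-1}) for $\Phi_\lambda$ together with $\Delta(t)=(2\sinh t)^{m_1+m_2}(2\cosh t)^{m_2}$, and remembering $m_1+2m_2=2\rho$, one gets
\begin{equation*}
\Phi_\lambda(a_t)\Delta(t)=2^{\rho+i\lambda}(\sinh t)^{m_1+m_2}(\cosh t)^{i\lambda-\rho+m_2}\,{}_2F_1\!\left(\tfrac{\rho-i\lambda}{2},\tfrac{m_1+2}{4}-\tfrac{i\lambda}{2};1-i\lambda;\cosh^{-2} t\right).
\end{equation*}
Now substitute $s=\cosh^{-2} t$, so that $\sinh^2 t=(1-s)/s$, $dt=-\tfrac12\,s^{-1}(1-s)^{-1/2}\,ds$, and $t:0\to\infty$ corresponds to $s:1\to 0$; all the half-integer powers of $s$ and $1-s$ combine cleanly and
\begin{equation*}
\int_0^\infty\Phi_\lambda(a_t)\Delta(t)\,dt=2^{\rho+i\lambda-1}\int_0^1 s^{d-1}(1-s)^{\frac{m_1+m_2-1}{2}}\,{}_2F_1(a,b;c;s)\,ds,
\end{equation*}
with $a=\tfrac{\rho-i\lambda}{2}$, $b=\tfrac{m_1+2}{4}-\tfrac{i\lambda}{2}$, $c=1-i\lambda$ and $d=-\tfrac{\rho+i\lambda}{2}$.

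The key observation is that this is exactly the integral in (\ref{properties of hypergermetric function-3}): one checks $b-d-1=\tfrac{m_1+2}{4}+\tfrac{\rho}{2}-1=\tfrac{m_1+m_2-1}{2}$, so the exponent of $(1-s)$ is the prescribed one, and the three hypotheses of (\ref{properties of hypergermetric function-3}) read $\Re d=\tfrac{\Im\lambda-\rho}{2}>0$, $\Re(b-d)=\tfrac{m_1+m_2+1}{2}>0$, and $\Re(c-a-d)=1>0$. Thus all three hold exactly when $\Im\lambda>\rho$, which is also the range in which the original integral converges absolutely (Lemma \ref{lemma-2}(b), since $\Phi_\lambda$ is a nonzero constant multiple of $b_\lambda$). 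Applying (\ref{properties of hypergermetric function-3}) and recording $c-a=1-\tfrac{\rho+i\lambda}{2}$, $c-d=1+\tfrac{\rho-i\lambda}{2}$, $b-d=\tfrac{m_1+m_2+1}{2}$, $c-a-d=1$ yields
\begin{equation*}
\int_0^\infty\Phi_\lambda(a_t)\Delta(t)\,dt=2^{\rho+i\lambda-1}\,\frac{\Gamma(1-i\lambda)\,\Gamma\!\left(-\tfrac{\rho+i\lambda}{2}\right)\Gamma\!\left(\tfrac{m_1+m_2+1}{2}\right)}{\Gamma\!\left(\tfrac{m_1+2}{4}-\tfrac{i\lambda}{2}\right)\Gamma\!\left(1-\tfrac{\rho+i\lambda}{2}\right)\Gamma\!\left(1+\tfrac{\rho-i\lambda}{2}\right)}.
\end{equation*}

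It then remains to simplify using the functional equation $\Gamma(z+1)=z\Gamma(z)$: this gives $\Gamma(-\tfrac{\rho+i\lambda}{2})/\Gamma(1-\tfrac{\rho+i\lambda}{2})=-2/(\rho+i\lambda)$ and $\Gamma(1+\tfrac{\rho-i\lambda}{2})=\tfrac{\rho-i\lambda}{2}\Gamma(\tfrac{\rho-i\lambda}{2})$, so the factor $(\rho+i\lambda)(\rho-i\lambda)=\rho^2+\lambda^2$ appears in the denominator; comparing the surviving Gamma quotient with the explicit formula $c(-\lambda)=\dfrac{2^{\rho+i\lambda}\Gamma(\frac{m_1+m_2+1}{2})\Gamma(-i\lambda)}{\Gamma(\frac{\rho-i\lambda}{2})\Gamma(\frac{m_1+2}{4}-\frac{i\lambda}{2})}$ and using $\Gamma(1-i\lambda)=-i\lambda\,\Gamma(-i\lambda)$, the powers of $2$ cancel and one is left with $\tfrac{2i\lambda\,c(-\lambda)}{\rho^2+\lambda^2}$, as claimed. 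I expect the only point demanding genuine care — apart from this Gamma bookkeeping — to be the legitimacy of invoking (\ref{properties of hypergermetric function-3}): near $s=1$ one has $\Re(c-a-b)=-\tfrac{m_1+m_2-1}{2}\le 0$, so ${}_2F_1(a,b;c;s)$ grows like $(1-s)^{-(m_1+m_2-1)/2}$ (with a logarithm when $m_1+m_2=1$), which is exactly absorbed by the $(1-s)^{(m_1+m_2-1)/2}$ weight, making the transformed integrand integrable; and, as noted, the three conditions of (\ref{properties of hypergermetric function-3}) are met on the whole strip $\Im\lambda>\rho$, so no analytic continuation is needed.
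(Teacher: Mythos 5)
Your proof is correct and is essentially the paper's own argument: the paper performs the same reduction (via the intermediate substitution $y=\cosh 2t$ followed by $x=2/(y+1)$, which is exactly your $s=\cosh^{-2}t$), applies the same Euler-type integral (\ref{properties of hypergermetric function-3}) with the same parameters $a,b,c,d$, and does the same Gamma simplification. Your added remarks on the convergence near $s=1$ and on where the hypothesis $\Im\lambda>\rho$ enters are a harmless (and welcome) elaboration of points the paper leaves implicit.
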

\begin{proof}
\begin{eqnarray*}
&&\int_0^\infty \Phi_\lambda(a_t)\Delta(t)dt \\
&=&\int_0^\infty(2\cosh t)^{i\lambda-\rho}{}_2F_1\left(\frac{\rho-i\lambda}{2},\frac{m_1+2}{4}-\frac{i\lambda}{2};1-i\lambda;\cosh^{-2}t\right)(2\sinh t)^{m_1+m_2}(2\cosh t)^{m_2}dt\\
&=& 2^{\frac{i\lambda+\rho-2}{2}}\int_1^\infty{}_2F_1\left(\frac{\rho-i\lambda}{2},\frac{m_1+2}{4}-\frac{i\lambda}{2};1-i\lambda;\frac{2}{y+1}\right)(y+1)^{\frac{i\lambda-\rho+m_2-1}{2}}(y-1)^{\frac{m_1+m_2-1}{2}}dy\\
&&\textup{( by the change of variable $y=\cosh 2t$) }\\
&=& 2^{i\lambda+\rho-1}\int_0^1 {}_2F_1\left(\frac{\rho-i\lambda}{2},\frac{m_1+2}{4}-\frac{i\lambda}{2};1-i\lambda;x\right)x^{-\frac{i\lambda+\rho+2}{2}}(1-x)^{\frac{m_1+m_2-1}{2}}dx\\
&&\textup{( by the change of variable $x=\frac{2}{y+1}$) }\\
&=& 2^{i\lambda+\rho-1}\frac{\Gamma(1-i\lambda)\Gamma(\frac{-i\lambda-\rho} {2})\Gamma(\frac{m_1+m_2+1}{2})}{\Gamma(\frac{m_1+2}{4}-\frac{i\lambda}{2})\Gamma(1+\frac{-i\lambda-\rho}{2})\Gamma(1+\frac{-i\lambda+\rho}{2})}\\
&&\textup{( by}\hspace{1mm}\ref{properties of hypergermetric function-3}\hspace{1mm} \textup{ with}\hspace{1mm} d=\frac{-i\lambda-\rho}{2}, a=\frac{\rho-i\lambda}{2},b=\frac{m_1+2}{4}-\frac{i\lambda}{2},c=1-i\lambda\textup{ )}\\
&=& -\frac{2^{i\lambda+\rho+1}}{\rho^2+\lambda^2}\frac{\Gamma(1-i\lambda)\Gamma(\frac{m_1+m_2+1}{2})}{\Gamma(\frac{m_1+2}{4}-\frac{i\lambda}{2})\Gamma(\frac{-i\lambda+\rho}{2})}=\frac{2i\lambda c(-\lambda)}{\rho^2+\lambda^2}.
\end{eqnarray*}
\end{proof}

\begin{lemma}\label{lemma-spherical transform of b-lambda}
Let $\lambda\in \mathbb{C}_+$. Then $\widehat{b}_\lambda(\xi)=\frac{1}{\xi^2-\lambda^2}$ for all $\xi\in \mathbb{R}$.
\end{lemma}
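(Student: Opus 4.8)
\emph{Proof proposal.} The plan is to read off $\what b_\lambda$ from the fact that $b_\lambda$ is, up to a constant, the solution $\Phi_\lambda$ of the radial eigenvalue equation (\ref{diff-hyper}) on $(0,\infty)$, whose only singularity lies at the origin and is mild enough (Lemma \ref{lemma-2}(a)) to contribute, after applying the radial Laplacian, nothing worse than a Dirac mass at $e$. Concretely I would argue in three steps.

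\emph{Step 1: the shape of $\what b_\lambda$.} First I would note that for every $\lambda\in\C_+$ and $\xi\in\R$ the integral $\int_0^\infty b_\lambda(a_t)\phi_\xi(a_t)\Delta(t)\,dt$ converges absolutely: near $0$ this is Lemma \ref{lemma-2}(a), and near $\infty$ the bound $|b_\lambda(a_t)|\le C(1+|\lambda|)^M e^{-(\Im\lambda+\rho)t}$ of Lemma \ref{lemma- estimates of b lambda}(b), together with $|\phi_\xi(a_t)|\,\Delta(t)\le C(1+t)e^{\rho t}$, leaves an integrand of size $O\big((1+t)e^{-(\Im\lambda)t}\big)$, which is integrable since $\Im\lambda>0$. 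Hence by (\ref{polar}) one has $\what b_\lambda(\xi)=\int_0^\infty b_\lambda(a_t)\phi_\xi(a_t)\Delta(t)\,dt$. Writing $L_A=\Delta(t)^{-1}\frac{d}{dt}\big(\Delta(t)\frac{d}{dt}\big)$ (one checks $\Delta'/\Delta=(m_1+m_2)\coth t+m_2\tanh t$), so that $L_A$ is formally self-adjoint for $\Delta(t)\,dt$, I would apply Green's identity on $[\varepsilon,R]$ to the pair $b_\lambda$ (solving $L_A u=-(\lambda^2+\rho^2)u$ on $(0,\infty)$ by (\ref{diff-hyper})) and $\phi_\xi$ (solving $L_A u=-(\xi^2+\rho^2)u$), getting $(\lambda^2-\xi^2)\int_\varepsilon^R b_\lambda\phi_\xi\Delta\,dt=\big[\Delta(t)\big(b_\lambda(a_t)\phi_\xi'(a_t)-b_\lambda'(a_t)\phi_\xi(a_t)\big)\big]_\varepsilon^R$. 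Letting $R\to\infty$ the upper boundary term vanishes by the above decay estimates (this is where $\Im\lambda>0$ is used). Letting $\varepsilon\to 0$, the Frobenius asymptotics of $b_\lambda$ and $b_\lambda'$ near $0$ (of order $t^{-(m_1+m_2-1)}$, resp. $\log(1/t)$) together with $\phi_\xi(a_0)=1$ and $\phi_\xi'(a_0)=0$ (since $\phi_\xi(a_t)$ is a smooth even function of $t$) show that the $b_\lambda\phi_\xi'$ cross term is $O(\varepsilon^2)$ and that only the $\Delta(\varepsilon)b_\lambda'(a_\varepsilon)\phi_\xi(a_\varepsilon)$ term survives, converging to a finite limit $c_0(\lambda)$ that does not involve $\xi$. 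Thus $\what b_\lambda(\xi)=\dfrac{c_0(\lambda)}{\lambda^2-\xi^2}$ for all $\xi\in\R$, $\lambda\in\C_+$.

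\emph{Step 2: identifying the constant, and extending in $\lambda$.} For $\Im\lambda>\rho$ we have $b_\lambda\in L^1(G//K)$ by Lemma \ref{lemma-2}(b), so $\what b_\lambda$ is analytic on $S_1^\circ$ and continuous on $S_1$; since $\pm\lambda\notin S_1$ the function $\xi\mapsto c_0(\lambda)/(\lambda^2-\xi^2)$ is analytic on $S_1$, and the identity of Step 1, valid on $\R$, extends to all of $S_1$. Evaluating at $\xi=i\rho$, where $\phi_{i\rho}\equiv 1$, gives, using (\ref{definition of b-lambda}) and Lemma \ref{integral 1}, $\what b_\lambda(i\rho)=\int_G b_\lambda(x)\,dx=\int_0^\infty b_\lambda(a_t)\Delta(t)\,dt=\dfrac{i}{2\lambda c(-\lambda)}\cdot\dfrac{2i\lambda c(-\lambda)}{\rho^2+\lambda^2}=-\dfrac{1}{\rho^2+\lambda^2}$, while $c_0(\lambda)/(\lambda^2-(i\rho)^2)=c_0(\lambda)/(\rho^2+\lambda^2)$; hence $c_0(\lambda)=-1$ for $\Im\lambda>\rho$. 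To remove this restriction I would observe that for fixed $\xi\in\R$ the map $\lambda\mapsto\what b_\lambda(\xi)=\int_0^\infty b_\lambda(a_t)\phi_\xi(a_t)\Delta(t)\,dt$ is holomorphic on $\C_+$: the integrand is holomorphic in $\lambda$ (both $\Phi_\lambda(a_t)$ and $1/c(-\lambda)$ are), and on any compact subset of $\C_+$, where $\Im\lambda$ is bounded below by a positive constant, Lemma \ref{lemma- estimates of b lambda} furnishes a fixed integrable majorant, so one may differentiate under the integral. Therefore $c_0(\lambda)=(\lambda^2-\xi^2)\what b_\lambda(\xi)$ is holomorphic on $\C_+$ and equals $-1$ on the nonempty open set $\{\Im\lambda>\rho\}$, hence $c_0\equiv-1$ on $\C_+$ by the identity theorem. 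This gives $\what b_\lambda(\xi)=\dfrac{1}{\xi^2-\lambda^2}$ for all $\lambda\in\C_+$ and $\xi\in\R$.

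\emph{Expected main obstacle.} Everything except Step 1 is routine once Lemma \ref{lemma- estimates of b lambda} is in hand (absolute convergence, vanishing of the boundary term at $\infty$, holomorphy in $\lambda$, and the clean special value $\xi=i\rho$ furnished by Lemma \ref{integral 1}). The delicate point is the $\varepsilon\to 0$ analysis of the lower boundary term in Green's identity: one needs the precise Frobenius asymptotics of both $b_\lambda$ and $b_\lambda'$ at the origin, must check that the cross terms indeed die and that the surviving term has a limit independent of $\xi$, and must run the logarithmic case $m_1+m_2=1$ separately. (Equivalently, this is exactly the statement that the $K$-biinvariant distribution $(L+\lambda^2+\rho^2)b_\lambda$ on $G$ is supported at $e$ and has no derivative-of-$\delta_e$ components — which one could alternatively justify via the Paley--Wiener theorem, Theorem \ref{Paley-Wiener}, noting that the spherical transform of such a distribution is an entire function of exponential type $0$ with at most polynomial growth on $\R$, hence a polynomial, which combined with the boundedness and decay of $\what b_\lambda$ forces it to be constant.)
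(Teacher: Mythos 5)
Your proposal is correct, and Step 2 (evaluating at $\xi=i\rho$ via Lemma \ref{integral 1}, then Morera/identity-theorem continuation in $\lambda$) coincides exactly with the paper's normalization of the constant. Where you genuinely diverge is Step 1. The paper does not integrate by parts at all: it observes that $T:=Lb_\lambda+(\rho^2+\lambda^2)b_\lambda$ is a $K$-invariant distribution supported at $\{eK\}$, hence of the form $c_0\delta+c_1L\delta+\cdots+c_kL^k\delta$, so that $\what b_\lambda(\xi)=\bigl(c_0-c_1(\xi^2+\rho^2)+\cdots\bigr)/(\lambda^2-\xi^2)$; the higher coefficients are then killed by the single soft fact that $\what b_\lambda$ vanishes at infinity (as $b_\lambda\in L^1+L^p$). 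Your Green's-identity argument on $[\varepsilon,R]$ reaches the same conclusion more concretely, but at a price the paper deliberately avoids: you need the precise first-order Frobenius asymptotics of $b_\lambda'$ (not just $b_\lambda$) at $t=0$ to see that $\Delta(\varepsilon)b_\lambda'(a_\varepsilon)$ has a limit, you need decay of $b_\lambda'$ at infinity to kill the upper boundary term (neither derivative estimate is recorded in the paper, though both follow from the hypergeometric representation and Frobenius theory), and you must treat the logarithmic case $m_1+m_2=1$ and the integer-indicial-root cases separately. In exchange, your route is self-contained at the level of ODEs and never invokes the structure theorem for distributions supported at a point or the spherical transform of $L^2$-tempered distributions. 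Your closing parenthetical — that the statement amounts to $(L+\lambda^2+\rho^2)b_\lambda$ being a pure Dirac mass, with the absence of derivative-of-$\delta$ terms forced by decay of $\what b_\lambda$ — is in fact a precise description of the paper's actual proof, so you have effectively identified both arguments; only the boundary-term computation still needs to be written out carefully if you keep the Green's identity version.
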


\begin{proof}
We view $b_\lambda$ as a $K$-invariant function on $G/K$. We define the $K$-invariant distribution $T$ on $G/K$ by 
$$
T:=L b_\lambda+(\rho^2+\lambda^2)b_\lambda.
$$
Since $L b_\lambda(gK)=-(\rho^2+\lambda^2)b_\lambda(gK)$ for all $gK\not=eK$, $T$ must be supported at $\{eK\}$. Therefore we can write 
$$
T=c_0\delta+c_1 L\delta+\cdots+
c_k L^k\delta
$$
for some constants $c_0,c_1,\cdots, c_k$, where $\delta$ denotes the  distribution on $G/K$ defined by $\delta(\phi)=\phi(eK)$ for $\phi\in C_c^\infty(G/K)$. Taking the spherical transform of both side (in the $L^2$-tempered distribution sense), and keeping in mind that $\widehat b_\lambda$ is continuous on $\mathbb{R}$, we get
$$
\widehat b_\lambda(\xi)=\frac{c_0-c_1(\xi^2+\rho^2)+\cdots +c_k(-1)^k(\xi^2+\rho^2)^k }{-\xi^2+\lambda^2} \hspace{3mm}\textup{for all}\hspace{1mm}\xi\in\mathbb{R}.
$$
Since $\widehat b_\lambda$ vanishes at infinity, we must have $c_k=c_{k-1}=\cdots=c_1=0$. Since the constant $c_0$ may depend on $\lambda$, we write $\kappa(\lambda)$ instead of $c_0$. So we have
\begin{eqnarray}\label{1}
\widehat{b}_\lambda(\xi)=\frac{\kappa(\lambda)}{-\xi^2+\lambda^2}\hspace{3mm}\textup{for all}\hspace{1mm}\xi\in\mathbb{R}.
\end{eqnarray}
Therefore, in order to complete the proof we only need to show that $\kappa(\lambda)=-1$. 
First we assume that $\Im\lambda>\rho$. 
Since, under this assumption, $b_\lambda$ is in $L^1$, its spherical transform is a well-defined continuous function on the strip $ S_1$ which is holomorphic in its interior. Therefore, by analytic continuation, we can write (for $\Im\lambda>\rho$)
\begin{eqnarray}
\widehat{b}_\lambda(z)=\frac{\kappa(\lambda)}{-z^2+\lambda^2}\hspace{3mm}\textup{for all}\hspace{1mm}z\in S_1.
\end{eqnarray}
Putting $z=i\rho$ in the above equation we get 
$$
\kappa(\lambda)=(\rho^2+\lambda^2)\int_0^\infty b_\lambda(a_t)\Delta(t)dt\hspace{3mm}\big(\textup{as}\hspace{1mm}\phi_{i\rho}(a_t)\equiv 1\big)
$$ 
which is equal to $-1$ by the definition of $b_\lambda$ (\ref{definition of b-lambda}) and Lemma \ref{integral 1}. Now we drop the restriction on $\lambda$ and assume that $\lambda\in \mathbb{C}_+$. Putting $\xi=0$ in equation \ref{1}, we get 
$$
\kappa(\lambda)=\lambda^2\int_0^\infty b_\lambda(a_t)\phi_0(a_t)\Delta(t)dt.
$$
Now, it is well known that for each fixed $t>0$, $\lambda\rightarrow\Phi_\lambda(t)$ is holomorphic on $\mathbb{C}_+$ so that the same is true for $b_\lambda(a_t)$. Since $b_\lambda$ has the estimates as in Lemma \ref{lemma- estimates of b lambda} and $\phi_0$ satisfies the estimate $|\phi_0(a_t)|\leq C(1+t)e^{-\rho t}$, $t\geq 0$, the integral converges absolutely and an application of Morera's theorem gives that $\kappa(\lambda)$ is holomorphic on $\mathbb{C}_+$. But we have already proved that $\kappa(\lambda)=-1$ if $\Im\lambda>\rho$. Therefore, by analytic continuation, $\kappa(\lambda)=-1$ for all $\lambda\in \mathbb{C}_+$, as required to prove.
\end{proof}

\begin{lemma}\label{lemma- L-1 norm of b-lambda}

\noindent (a) If $\Im\lambda>\rho$,
$
||b_\lambda||_1\leq C\frac{(1+|\lambda|)^K}{\Im\lambda-\rho}
$ for some $C>0$ and non-negative integer $K$.

(b) $||b_\lambda||_1\rightarrow 0$ if $\lambda\rightarrow \infty$ along the positive imaginary axis.
\end{lemma}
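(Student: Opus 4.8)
The plan is to estimate $\|b_\lambda\|_1$ directly from the Cartan integral formula (\ref{polar}), which for the $K$-biinvariant function $b_\lambda$ reduces to $\|b_\lambda\|_1=\int_0^\infty|b_\lambda(a_t)|\,\Delta(t)\,dt$, splitting the integral at $t=1/2$ and inserting the two estimates of Lemma \ref{lemma- estimates of b lambda} (whose constants and exponents are uniform in $\lambda\in\C_+$) together with the asymptotics $\Delta(t)\asymp t^{m_1+m_2}$ near $0$ and $\Delta(t)\le Ce^{2\rho t}$ for $t\ge 1/2$ (the latter from $2\sinh t<e^t$, $2\cosh t<2e^t$, and $m_1+2m_2=2\rho$).

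For part (a): on $(0,1/2]$, Lemma \ref{lemma- estimates of b lambda}(a) gives $|b_\lambda(a_t)|\,\Delta(t)\le C(1+|\lambda|)^N t^{-(m_1+m_2-1)}t^{m_1+m_2}=C(1+|\lambda|)^N t$ when $m_1+m_2>1$, and $\le Ct\log(1/t)$ (with no $\lambda$-dependence) when $m_1+m_2=1$; in either case the integrand is integrable on $(0,1/2]$ and this portion contributes at most $C(1+|\lambda|)^N$. On $[1/2,\infty)$, Lemma \ref{lemma- estimates of b lambda}(b) gives $|b_\lambda(a_t)|\,\Delta(t)\le C(1+|\lambda|)^M e^{-(\Im\lambda+\rho)t}e^{2\rho t}=C(1+|\lambda|)^M e^{-(\Im\lambda-\rho)t}$; since $\Im\lambda-\rho>0$, integrating over $[1/2,\infty)$ yields at most $C(1+|\lambda|)^M\frac{e^{-(\Im\lambda-\rho)/2}}{\Im\lambda-\rho}\le\frac{C(1+|\lambda|)^M}{\Im\lambda-\rho}$. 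Adding the two contributions and using the trivial bound $0<\Im\lambda-\rho<\Im\lambda\le|\lambda|<1+|\lambda|$ to write $C(1+|\lambda|)^N\le\frac{C(1+|\lambda|)^{N+1}}{\Im\lambda-\rho}$, one obtains $\|b_\lambda\|_1\le\frac{C(1+|\lambda|)^K}{\Im\lambda-\rho}$ with $K=\max\{N+1,M\}$.

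For part (b): the bound from (a) is useless here since it grows along the imaginary axis, so I would instead use positivity. For $\lambda=i\eta$ with $\eta>0$ the function $b_{i\eta}$ is nonnegative (as noted right after (\ref{definition of b-lambda})), so for $\eta>\rho$ one has $\|b_{i\eta}\|_1=\int_0^\infty b_{i\eta}(a_t)\,\Delta(t)\,dt$. By the definition (\ref{definition of b-lambda}) and Lemma \ref{integral 1} (equivalently, by evaluating $\widehat{b_{i\eta}}$ at $z=i\rho$ via Lemma \ref{lemma-spherical transform of b-lambda} and $\phi_{i\rho}\equiv1$), this integral equals $\frac{i}{2(i\eta)c(-i\eta)}\cdot\frac{2i(i\eta)c(-i\eta)}{\rho^2+(i\eta)^2}=\frac{-1}{\rho^2-\eta^2}=\frac{1}{\eta^2-\rho^2}$, which tends to $0$ as $\eta\to\infty$.

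The substantive analytic work has already been done in Lemmas \ref{lemma- estimates of b lambda} and \ref{integral 1}, so I do not anticipate a real obstacle; the only point needing slight care is the bookkeeping of the exponents of $(1+|\lambda|)$ in part (a) and the observation that the non-fractional term can be absorbed into the fractional one at the cost of a single extra power of $(1+|\lambda|)$.
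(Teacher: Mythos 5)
Your proof is correct and follows essentially the same route as the paper: split the integral at $t=1/2$, apply the two estimates of Lemma \ref{lemma- estimates of b lambda} together with the asymptotics of $\Delta$, absorb the bounded near-origin contribution into the $\frac{1}{\Im\lambda-\rho}$ term at the cost of one extra power of $(1+|\lambda|)$, and for part (b) use the positivity of $b_{i\eta}$ to identify $\|b_{i\eta}\|_1$ with $\widehat{b_{i\eta}}(i\rho)=\frac{1}{\eta^2-\rho^2}$. The bookkeeping of exponents and the evaluation via Lemma \ref{integral 1} both check out.
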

\begin{proof}
(a)
Since $\Delta(t)\asymp t^{m_1+m_2}$ near $0$ and $\Delta(t)\asymp e^{2\rho t}$ near $\infty$, (a) and (b) of Lemma \ref{lemma- estimates of b lambda} respectively implies that 
$$
\int_0^{1/2}|b_\lambda(a_t)|\Delta(t)dt\leq C(1+|\lambda|)^N
$$
and 
$$
\int_{1/2}^\infty |b_\lambda(a_t)|\Delta(t)
dt\leq C\frac{(1+|\lambda|)^M}{\Im\lambda-\rho}.
$$
But the right hand side of the first inequality is clearly less than or equal to $C\frac{(1+|\lambda|)^{N+1}}{\Im\lambda-\rho}$. Taking $K$ to be the maximum of $N+1$ and $M$ the lemma follows.

\noindent (b) If $\lambda=i\eta$ ($\eta>0$),  then $b_{i\eta}$ is non negative function. Therefore, if $\eta>\rho$,
 $$
||b_{i\eta}||_1=\int_0^\infty b_{i\eta}(a_t)\Delta(t)dt=\what{b_{i\eta}}(i\rho)=\frac{1}{-\rho^2+\eta^2}\hspace{3mm}\textup{by Theorem}\hspace{1mm}\ref{lemma-spherical transform of b-lambda}.
$$ 
Hence the proof of (b) follows.
\end{proof}

\begin{lemma}\label{denseness of b-lambda}
The functions $\{b_\lambda\mid \Im\lambda>\rho\}$ span a dense subspace of $L^1(G//K)$.
\end{lemma}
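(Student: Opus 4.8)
The plan is to use the Hahn--Banach theorem: it suffices to show that any $g\in L^\infty(G//K)$ that annihilates every $b_\lambda$ with $\Im\lambda>\rho$ must be zero. So fix such a $g$ and consider the function
\[
F(\lambda):=\langle b_\lambda,g\rangle=\int_0^\infty b_\lambda(a_t)\,\overline{g(a_t)}\,\Delta(t)\,dt,\qquad \Im\lambda>\rho.
\]
(We may drop the conjugate since everything is $K$-biinvariant and real pairing suffices.) First I would check that $F$ is holomorphic on the half-plane $\{\Im\lambda>\rho\}$: for each fixed $t>0$ the map $\lambda\mapsto b_\lambda(a_t)=\frac{i}{2\lambda c(-\lambda)}\Phi_\lambda(a_t)$ is holomorphic on $\mathbb C_+$ (this was already used in the proof of Lemma \ref{lemma-spherical transform of b-lambda}), and the estimates in Lemma \ref{lemma- estimates of b lambda}, together with $\Delta(t)\asymp t^{m_1+m_2}$ near $0$ and $\asymp e^{2\rho t}$ near $\infty$, make the integral converge locally uniformly in $\lambda$; Morera's theorem then gives holomorphy.

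Next, the key point is to recognize $F$ as (essentially) the spherical transform of $g$ evaluated on an imaginary variable. Indeed, by Lemma \ref{lemma-spherical transform of b-lambda}, $\widehat{b_\lambda}(\xi)=\frac{1}{\xi^2-\lambda^2}$ for $\xi\in\mathbb R$; and when $\Im\lambda>\rho$, $b_\lambda\in L^1(G//K)$ so its spherical transform extends holomorphically to $S_1$ with $\widehat{b_\lambda}(z)=\frac{1}{z^2-\lambda^2}$ there. The idea is that pairing $b_\lambda$ against $g$ amounts, after interchanging the order of integration, to integrating $\widehat{g}$ against the kernel of $b_\lambda$; more precisely, writing $\mu$ for the (tempered) spherical transform of the distribution $g$, one gets
\[
F(\lambda)=\Big\langle \frac{1}{z^2-\lambda^2},\,\mu(z)\Big\rangle,
\]
i.e. $F$ is (up to normalization) the Cauchy/resolvent transform of the bounded object $\widehat g$, holomorphic in $\lambda^2$. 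The cleanest route, and the one I would actually take, avoids distribution theory: choose a test function $\varphi\in C_c^\infty(G//K)$, and consider $\varphi*b_\lambda$; since $\widehat{\varphi*b_\lambda}(\xi)=\widehat\varphi(\xi)/(\xi^2-\lambda^2)$ lies in $PW(\mathbb C)_e$ away from $\pm\lambda$, one can use the Paley--Wiener isomorphism (Theorem \ref{Paley-Wiener}) and the inversion formula to express $\langle\varphi*b_\lambda,g\rangle$ in terms of $\widehat g$ and deduce that $\langle b_\lambda,g\rangle$ determines $\widehat g$.

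Concretely I would argue as follows. The hypothesis $F\equiv 0$ on $\{\Im\lambda>\rho\}$ means $\widehat{b_\lambda}$ annihilates $g$ for all such $\lambda$. Approximating $\frac{1}{\xi^2-\lambda^2}$ (for varying $\lambda$ in the half-plane) by their spans, one checks that these functions generate a weak-$*$ dense subspace of the relevant space of symbols, hence $g$ annihilates a weak-$*$ dense set and so $g=0$. Alternatively, and more in the spirit of this paper, one invokes the injectivity of the spherical transform on $L^\infty$-tempered $K$-biinvariant distributions: from $\langle b_\lambda,g\rangle=0$ for all $\Im\lambda>\rho$ one gets that the holomorphic function $\lambda^2\mapsto\langle b_\lambda,g\rangle$ vanishes, and since $\{b_\lambda\}$ together with $C_c^\infty(G//K)$ convolutions recover all of $L^1(G//K)$ by a Paley--Wiener/Wiener-Tauberian-for-$\mathbb R_+$ argument, $g$ must kill a dense subspace. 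I expect the main obstacle to be making the interchange-of-integration and the ``these kernels span a dense subspace'' step fully rigorous: one must control the $\lambda$-dependence uniformly (which is exactly what Lemma \ref{lemma- L-1 norm of b-lambda} and Lemma \ref{lemma- estimates of b lambda} are for) and identify precisely which space of symbols $\{\,\xi\mapsto(\xi^2-\lambda^2)^{-1}:\Im\lambda>\rho\,\}$ spans densely; once that bookkeeping is in place the vanishing of $g$ is immediate.
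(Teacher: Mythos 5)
Your overall framework (Hahn--Banach duality: show that any $g\in L^\infty(G//K)$ annihilating all $b_\lambda$ with $\Im\lambda>\rho$ must vanish) is a legitimate way to phrase the problem, but as written your argument has a genuine gap at its core: the step ``these kernels span a weak-$*$ dense subspace of the relevant space of symbols'' is not an auxiliary bookkeeping issue --- it \emph{is} the lemma, merely restated on the transform side. Saying that $g$ annihilates a weak-$*$ dense set because the functions $\xi\mapsto(\xi^2-\lambda^2)^{-1}$ ``generate'' the symbol space is circular unless you actually exhibit how a general symbol (say $\widehat{f}$ for $f\in C_c^\infty(G//K)$) is approximated by finite linear combinations of the $(\xi^2-\lambda^2)^{-1}$. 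You flag this yourself as ``the main obstacle,'' but you never supply the mechanism, and neither of your two suggested routes (the convolution $\varphi*b_\lambda$ trick, or ``injectivity of the spherical transform on tempered distributions'') closes it: injectivity of the transform on distributions tells you nothing until you know that the data $\{\langle b_\lambda,g\rangle\}_{\Im\lambda>\rho}$ determines $\widehat{g}$, which again is the density statement.

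The missing ingredient, and the one the paper uses, is an explicit reproducing formula. For $f\in C_c^\infty(G//K)$, the Paley--Wiener theorem gives polynomial decay of $\widehat{f}$ on horizontal strips, so Cauchy's formula over the two lines $\Im z=\pm(\rho+1)$ applies; folding the lower contour onto the upper one using the evenness of $\widehat{f}$ yields
\[
\widehat{f}(w)=\frac{1}{2\pi i}\int_{\Gamma_1}\frac{2z\,\widehat{f}(z)}{z^2-w^2}\,dz
=\frac{1}{2\pi i}\int_{\Gamma_1}2z\,\widehat{f}(z)\,\widehat{b_z}(w)\,dz,\qquad \Gamma_1=\R+i(\rho+1),
\]
and the $L^1$-norm bound $\|b_z\|_1\leq C(1+|z|)^K/(\Im z-\rho)$ together with the rapid decay of $\widehat{f}$ on $\Gamma_1$ shows that the corresponding $L^1(G//K)$-valued integral $\frac{1}{2\pi i}\int_{\Gamma_1}2z\widehat{f}(z)\,b_z\,dz$ converges to $f$. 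Its Riemann sums are finite linear combinations of the $b_z$, which is exactly the density claim (and, dually, exactly what forces your $g$ to vanish on all of $C_c^\infty(G//K)$). Without this identity, or some equivalent quantitative approximation of $\widehat{f}$ by the resolvent kernels, your proof does not go through.
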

\begin{proof}
The proof is similar to the $\mathrm{SL}_2(\mathbb{R})$ case (see \cite[Lemma 3.2]{Ben-2}).   It is enough to show that    $\overline{\text{ span}\{b_\lambda\mid \Im\lambda>\rho\}}$ contains $C_c^\infty(G//K)$.
 
 Let $f\in C_c^\infty(G//K)$. Since $\what{f}$ is entire and it has polynomial decay on any bounded horizontal strip (see Theorem \ref{Paley-Wiener}), Cauchy's formula implies that
$$
\widehat{f}(w)=\frac{1}{2\pi i}\int_{\Gamma_1+\Gamma_2}\frac{\widehat{f}(z)}{z-w}dz, w\in S_1
$$
where $\Gamma_1=\mathbb{R}+i(\rho+ 1)$ rightward and $\Gamma_2=\mathbb{R}-i(\rho+ 1)$ leftward. In the second integral (i.e. integration over $\Gamma_2$), we make the change of variable $z\rightarrow -z$ to get 
\begin{eqnarray*}
\widehat{f}(w)&=&\frac{1}{2\pi i}\int_{\Gamma_1}\frac{\widehat{f}(z)}{z-w}dz+\frac{1}{2\pi i}\int_{\Gamma_1}\frac{\widehat{f}(-z)}{-z-w}(-dz)\\
&=& \frac{1}{2\pi i}\int_{\Gamma_1}\frac{2z\widehat{f}(z)}{z^2-w^2}dz \hspace{3mm}\textup{(since}\hspace{1mm}\widehat{f}\hspace{1mm}\textup{is even) }
\end{eqnarray*}
Now for $z\in\Gamma_1$, $\Im z>\rho$ so (by Lemma \ref{lemma-2}(b)) $b_z$ is in $L^1$ and hence its spherical transform is well-defined continuous function on $ S_1$ and holomorphic in its interior. Therefore, by Lemma \ref{lemma-spherical transform of b-lambda}, $\widehat{b}_z(w)=\frac{1}{z^2-w^2}$. So we can write 
$$
\widehat{f}(w)=\frac{1}{2\pi i}\int_{\Gamma_1}2z\widehat{f}(z)\widehat{b}_z(w)dz.
$$
Lemma \ref{lemma- L-1 norm of b-lambda} together with the decay condition on $\widehat{f}$ imply that the $L^1(G//K)$-valued integral
$$
\frac{1}{2\pi i}\int_{\Gamma_1}2z\widehat{f}(z)b_z(\cdot)dz
$$ converges; and the above equation shows that it must converge to $f$. Since the Riemann sums of the integral are nothing but finite linear combinations of $b_\lambda$'s, we conclude that $f$ is in the closed subspace spanned by $\{b_\lambda\mid \Im\lambda =\rho +1\}$. Hence the Lemma.
\end{proof}

\section{Representatives of $B_\lambda, \,\,\,0<\Im\lambda<\rho$}
Let $f$ be a $K$-biinvariant integrable function on $G$. For each $\lambda$, with $0<\Im\lambda<\rho$, we define
\begin{eqnarray}\label{definition of T-lambda-f}
T_\lambda f:=\widehat{f}(\lambda)b_\lambda -f*b_\lambda.
\end{eqnarray}
Since $b_\lambda$ can be written as a sum of $L^1$ and $L^p$ ($p<2$) function, $T_\lambda f$ is well-defined; in fact it also has the same form i.e. can be written as a sum of $L^1$ and $L^p$ function. In particular its spherical transform is a continuous function on $\mathbb{R}$. The following lemma is an easy consequence of Lemma \ref{lemma-spherical transform of b-lambda}. 

\begin{lemma}\label{spherical transform of T-lambda-f}
Let $0<\Im\lambda<\rho$ and $f$ be a $K$-biinvariant integrable function on $G$ . Then
$$
\widehat{T_\lambda f}(\xi)=\frac{\widehat{f}(\lambda)-\widehat{f}(\xi)}{\xi^2-\lambda^2},\hspace{3mm}\textup{for all}\hspace{1mm} \xi\in\mathbb{R}.
$$
\end{lemma}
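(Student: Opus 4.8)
The plan is to compute the spherical transform of $T_\lambda f = \widehat f(\lambda) b_\lambda - f * b_\lambda$ directly from the definition, using the fact already noted that $b_\lambda$ splits as a sum of an $L^1$ function and an $L^p$ function for some $p < 2$. This decomposition matters because it guarantees two things: first, that the convolution $f * b_\lambda$ makes sense for $f \in L^1(G//K)$ (convolving $L^1$ with $L^1 + L^p$ lands in $L^1 + L^p$), and second, that $T_\lambda f$ is again of the form $L^1 + L^p$, so that it is an $L^2$-tempered $K$-biinvariant distribution whose spherical transform is a genuine continuous function on $\mathbb R$. This is exactly what lets me evaluate $\widehat{T_\lambda f}$ pointwise on $\mathbb R$ rather than only as a distribution.

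The key computational step is the multiplicativity of the spherical transform on $K$-biinvariant functions: for $\xi \in \mathbb R$ (where $\phi_\xi$ is bounded, so pairing against $L^1 + L^p$ functions is legitimate), one has $\widehat{f * b_\lambda}(\xi) = \widehat f(\xi)\, \widehat{b_\lambda}(\xi)$. Combined with linearity this gives
\[
\widehat{T_\lambda f}(\xi) = \widehat f(\lambda)\,\widehat{b_\lambda}(\xi) - \widehat f(\xi)\,\widehat{b_\lambda}(\xi) = \bigl(\widehat f(\lambda) - \widehat f(\xi)\bigr)\,\widehat{b_\lambda}(\xi).
\]
Now I invoke Lemma~\ref{lemma-spherical transform of b-lambda}, which says $\widehat{b_\lambda}(\xi) = \frac{1}{\xi^2 - \lambda^2}$ for all $\xi \in \mathbb R$ and all $\lambda \in \mathbb C_+$ — in particular for $0 < \Im\lambda < \rho$. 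Substituting yields $\widehat{T_\lambda f}(\xi) = \frac{\widehat f(\lambda) - \widehat f(\xi)}{\xi^2 - \lambda^2}$, which is the claim. Note that $\widehat f$ is defined and analytic on $S_1^\circ$, so $\widehat f(\lambda)$ makes sense for $0 < \Im\lambda < \rho$; and although the right-hand side has an apparent singularity where $\xi^2 = \lambda^2$, this does not occur for real $\xi$ since $\lambda$ is non-real, so the expression is genuinely a continuous function of $\xi \in \mathbb R$, consistent with the left-hand side being continuous.

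The only point requiring a little care — and the place I would be most careful in writing this out — is justifying the convolution identity $\widehat{f * b_\lambda} = \widehat f \cdot \widehat{b_\lambda}$ on $\mathbb R$ when $b_\lambda$ is merely in $L^1 + L^p$ rather than $L^1$. The clean way is to handle the $L^1$ summand by the standard multiplicativity of the spherical transform on $L^1(G//K)$, and the $L^p$ summand ($p < 2$) by the corresponding multiplicativity for the $L^p$-spherical transform (valid since the $L^p$-Fourier transform is defined on the smaller strip $S_p$ and $\mathbb R = \partial S_2 \subset S_p$), using that $\phi_\xi$ is bounded for real $\xi$ so the defining integrals converge absolutely after Fubini. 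Alternatively, since $f \in C_c^\infty$ would suffice by density for the applications and $f * b_\lambda$ is then manifestly in $L^1 + L^p$ with controlled behaviour, one can reduce to that case; but the direct argument via the two summands is cleanest and needs no approximation. Once this is in place, the lemma is immediate.
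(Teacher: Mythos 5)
Your argument is exactly the one the paper intends: it states the lemma as an ``easy consequence'' of Lemma~\ref{lemma-spherical transform of b-lambda}, relying precisely on the definition $T_\lambda f=\widehat f(\lambda)b_\lambda-f*b_\lambda$, the multiplicativity of the spherical transform, and the $L^1+L^p$ decomposition of $b_\lambda$. The only nitpick is your side remark that boundedness of $\phi_\xi$ justifies absolute convergence against the $L^p$ summand --- on a noncompact group one should instead use that $\phi_\xi\in L^{p'}$ for real $\xi$ and $p'>2$ (or the $L^p$-Schwartz space theory already invoked in the paper) --- but this does not affect the correctness of the proof.
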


Next we  show that $T_\lambda f$ is integrable and estimate its $L^1$ norm. But, for this first we formulate $T_\lambda f$ in another way using the following lemma.

\begin{lemma}\label{lemma-functional equation of b-lambda}
Let $\lambda\in\mathbb{C}_+$. Then
\begin{eqnarray*}\int_K b_\lambda(a_ska_t)dk=
\begin{cases}
b_\lambda(a_t)\phi_\lambda 
(a_s)\hspace{3mm}\textup{if}
\hspace{1mm}t> s\geq 0,\\
b_\lambda(a_s)\phi_\lambda(a_t)\hspace{3mm}\textup{if}
\hspace{1mm}s > t\geq 0.
\end{cases}
\end{eqnarray*}
\end{lemma}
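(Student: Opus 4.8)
The plan is to prove the claimed functional equation by reducing it, via the very definition $b_\lambda = \frac{i}{2\lambda c(-\lambda)}\Phi_\lambda$, to a product formula for the solution $\Phi_\lambda$ of the radial equation \eqref{diff-hyper}. The key point is that $\Phi_\lambda$ is, up to a constant, the unique solution of $L\phi = -(\lambda^2+\rho^2)\phi$ on $G\setminus K$ whose behavior at infinity is $e^{(i\lambda-\rho)t}(1+o(1))$ (cf.\ \eqref{est-Phi}), and that the elementary spherical function $\phi_\lambda$ is the normalized solution regular at the origin with $\phi_\lambda(e)=1$. First I would fix $s\geq 0$ and consider the $K$-biinvariant function
$$
F_s(a_t) := \int_K b_\lambda(a_s k a_t)\,dk, \qquad t > s.
$$
One checks that $g\mapsto \int_K b_\lambda(a_s k\, g)\,dk$ is again a $K$-biinvariant eigenfunction of the Laplace--Beltrami operator with the same eigenvalue $-(\lambda^2+\rho^2)$ wherever $b_\lambda$ is smooth, i.e.\ away from the "diagonal"; more precisely, for $t>s$ the element $a_s k a_t$ never lies in $K$ (its Cartan projection is bounded below in terms of $|t-s|>0$, uniformly in $k$), so differentiation under the integral sign and the fact that $L$ commutes with left and right translations show that $t\mapsto F_s(a_t)$ solves the ODE \eqref{diff-hyper} on $(s,\infty)$. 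Since $b_\lambda$ is a constant multiple of $\Phi_\lambda$, which decays like $e^{(i\lambda-\rho)t}$ at infinity (and $\Im\lambda > 0$ guarantees genuine decay and $L^2$-temperedness, Lemma \ref{lemma-2}), one can evaluate the asymptotics of $F_s(a_t)$ as $t\to\infty$: by the integral formula $\phi_\lambda(x)=\int_K e^{-(i\lambda+\rho)H(xk)}\,dk$ and standard estimates on the Iwasawa projection, $\int_K b_\lambda(a_s k a_t)\,dk \sim b_\lambda(a_t)\,\phi_\lambda(a_s)$ in the sense that the ratio tends to $1$. Because the space of solutions of \eqref{diff-hyper} on $(s,\infty)$ that decay at infinity is one-dimensional (spanned by $\Phi_\lambda$, equivalently by $b_\lambda$), this forces $F_s(a_t) = \phi_\lambda(a_s)\,b_\lambda(a_t)$ for all $t>s$, which is the first case. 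The second case $s>t\geq 0$ then follows either by the symmetry $\int_K b_\lambda(a_s k a_t)\,dk = \int_K b_\lambda(a_t k a_s)\,dk$ (a consequence of $K$-biinvariance of $b_\lambda$ together with $\phi_\lambda(x)=\phi_\lambda(x^{-1})$ and the standard $KAK$ symmetrization identity $\int_K f(a_s k a_t)\,dk=\int_K f(a_t k a_s)\,dk$ for $K$-biinvariant $f$), or by running the same argument with the roles of $s$ and $t$ exchanged.

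An alternative, perhaps cleaner, route avoids asymptotics: one uses that for a $K$-biinvariant eigenfunction $\psi$ of the full algebra $\mathbb D(G/K)$ with eigenvalues matching those of $\phi_\lambda$, the function $x\mapsto \int_K \psi(a_s k x)\,dk$ is proportional to $\phi_\lambda$ evaluated suitably --- this is the classical "functional equation" $\int_K \phi_\lambda(xky)\,dk = \phi_\lambda(x)\phi_\lambda(y)$ for elementary spherical functions. Since $b_\lambda$ is a constant multiple of $\Phi_\lambda = c(\lambda)^{-1}(\phi_\lambda - c(-\lambda)\Phi_{-\lambda})$ off $K$, one may split the integral accordingly and use the known product formula for $\phi_\lambda$ plus a matching computation for $\Phi_{-\lambda}$; but since $\Phi_{-\lambda}$ is not globally defined on $G$ this requires care, and I expect the cleanest presentation is the asymptotic/uniqueness-of-decaying-solution argument above.

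The main obstacle is the justification that $t\mapsto F_s(a_t)$ genuinely satisfies the radial eigenvalue equation on the open interval $(s,\infty)$ and the control of its asymptotic constant: one must verify (i) that $a_s k a_t \notin K$ for all $k\in K$ when $t>s$, so that $b_\lambda$ is smooth along the whole range of integration and differentiation under the integral is legitimate (here the Cartan projection estimate, e.g.\ $\sigma(a_s k a_t)\geq |t-s|$, does the job), and (ii) that the decaying-solution space is exactly one-dimensional and that the limiting ratio $F_s(a_t)/b_\lambda(a_t) \to \phi_\lambda(a_s)$ is computed correctly, using \eqref{est-Phi}, \eqref{est-phi} and dominated convergence with the bound $|\phi_\lambda(a_t)|\le 1$ on $S_1$ when applicable, or the explicit Harish-Chandra expansion otherwise. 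Once these two points are in place, the identification of the two solutions and hence the stated formula is immediate.
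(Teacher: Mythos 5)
Your overall strategy (show that the $K$-average is again an eigenfunction of $L$ away from the singular set, then identify it inside the two-dimensional solution space of \eqref{diff-hyper}) is the right one, but the way you pin down the solution has two genuine problems. First, your claim that ``the space of solutions of \eqref{diff-hyper} on $(s,\infty)$ that decay at infinity is one-dimensional, spanned by $\Phi_\lambda$'' is false for $0<\Im\lambda<\rho$: there $|\Phi_{-\lambda}(a_t)|\sim e^{(\Im\lambda-\rho)t}\to 0$ as well, so \emph{every} solution decays and decay at infinity selects nothing. (One could repair this by arguing that convergence of the ratio $F_s(a_t)/\Phi_\lambda(a_t)$ to a finite limit forces the $\Phi_{-\lambda}$-coefficient to vanish, since $|\Phi_{-\lambda}/\Phi_\lambda|\to\infty$, but that is not the argument you wrote.) Second, and more seriously, the step you would then lean on --- that $\int_K b_\lambda(a_ska_t)\,dk \sim b_\lambda(a_t)\phi_\lambda(a_s)$ as $t\to\infty$ --- is asserted via ``standard estimates on the Iwasawa projection'' but never carried out. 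Making it precise requires the asymptotics of the Cartan projection of $a_ska_t$ uniformly in $k$, an error term good enough to beat $e^{2\Im\lambda\, t}$, and a dominated-convergence argument near the singular set; this is essentially the content of the lemma itself in asymptotic form, so as written the argument is close to circular and the hard part is missing.

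The paper avoids all of this by looking at the \emph{other} endpoint. Using the symmetry $\int_K b_\lambda(a_ska_t)\,dk=\int_K b_\lambda(a_tka_s)\,dk$, fix $s>0$ and consider $t\in(0,s)$: then $g\mapsto\int_K b_\lambda(a_skg)\,dk$ is a smooth $K$-biinvariant eigenfunction of $L$ on the whole open ball of radius $s$, so its radial part is a solution of \eqref{diff-hyper} on $(0,s)$ that is \emph{regular at the origin}. The regular solution space at $0$ genuinely is one-dimensional (spanned by $\phi_\lambda$; the second Frobenius exponent is singular), so the average equals $C\phi_\lambda(a_t)$, and the constant is read off by the trivial evaluation at $t=0$: $C=\int_K b_\lambda(a_sk)\,dk=b_\lambda(a_s)$. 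No asymptotics, no interchange of limits. I would strongly recommend switching to this normalization at $t=0$ rather than at $t=\infty$.
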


\begin{proof}
First we note that $s\not=t$ imples that $a_ska_t\not \in K$.
Therefore the integral is well defined whenever $s\neq t$, because $b_\lambda$ is smooth outside $K$. Since $b_\lambda$ is $K$-biinvariant, $\int_K b_\lambda(a_ska_t)dk=\int_K b_\lambda(a_tka_s)dk$. We prove the second case. Fix $s>0$. Since $b_\lambda$ is a smooth $K$-biinvariant eigenfunction of $L$ on $G\setminus K$ with eigenvalue $-(\rho^2+\lambda^2)$, the function $g\mapsto\int_K b_\lambda(a_skg)dk$ is a smooth $K$-biinvariant eigenfunction of $L$ on the open ball $B_s$ with the same eigenvalue. Here $B_s=\{k_1a_r k_2\in K \overline{A^+} K\mid r<s\}$. Therefore the function $t\mapsto\int_Kb_\lambda(a_ska_t)dk$ is solution of the  eqn. (\ref{diff-hyper}) on the interval $(0,s)$ which is regular at $0$. Therefore 
$$
\int_Kb_\lambda(a_ska_t)dk=C\phi_\lambda(a_t)\hspace
{3mm}\textup{for all}\hspace{1mm} 0\leq t<s,
$$
for some constant $C$. Putting $t=0$ or equivalently $a_t=e$ in the above equation we get $C=b_\lambda(a_s)$. Hence the proof.
\end{proof}

\begin{lemma}\label{lemma-another formula of T-lambda-f}
Let $0<\Im\lambda<\rho$ and $f$ be a $K$-biinvariant integrable function on $G$. Then for all $t>0$,
$$
T_\lambda f(a_t)=b_\lambda(a_t)\int_t^\infty f(a_s)\phi_\lambda(a_s)\Delta(s)ds-\phi_\lambda(a_t)
\int_t^\infty f(a_s)b_\lambda(a_s)\Delta(s)ds.
$$
\end{lemma}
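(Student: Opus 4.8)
The plan is to start from the definition $T_\lambda f = \what{f}(\lambda) b_\lambda - f * b_\lambda$ and unfold the convolution $f*b_\lambda$ using the Cartan decomposition integration formula (\ref{polar}). Since everything in sight is $K$-biinvariant, for $t>0$ we have
\[
(f*b_\lambda)(a_t) = \int_G f(g) b_\lambda(g^{-1}a_t)\,dg = \int_0^\infty f(a_s)\left(\int_K b_\lambda(a_t k a_s)\,dk\right)\Delta(s)\,ds,
\]
where the inner $K$-integral is exactly the object computed in Lemma~\ref{lemma-functional equation of b-lambda}. (Some care is needed because $b_\lambda$ is only a sum of an $L^1$ and an $L^p$ function and has a singularity at $e$; but $f$ is integrable and, by Lemma~\ref{lemma-2}(a), $b_\lambda$ is locally integrable at $e$, so Fubini applies and the inner integral is well-defined for a.e.\ $s$, in fact for all $s\neq t$.)

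Next I would split the $s$-integral at $s=t$ and apply the two cases of Lemma~\ref{lemma-functional equation of b-lambda}: on $\{s<t\}$ the inner integral equals $b_\lambda(a_t)\phi_\lambda(a_s)$, and on $\{s>t\}$ it equals $b_\lambda(a_s)\phi_\lambda(a_t)$. This gives
\[
(f*b_\lambda)(a_t) = b_\lambda(a_t)\int_0^t f(a_s)\phi_\lambda(a_s)\Delta(s)\,ds + \phi_\lambda(a_t)\int_t^\infty f(a_s) b_\lambda(a_s)\Delta(s)\,ds.
\]
On the other hand, since $\phi_\lambda$ is the spherical function, $\what{f}(\lambda) = \int_0^\infty f(a_s)\phi_\lambda(a_s)\Delta(s)\,ds$, so $\what{f}(\lambda) b_\lambda(a_t) = b_\lambda(a_t)\int_0^\infty f(a_s)\phi_\lambda(a_s)\Delta(s)\,ds$. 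Subtracting, the $\int_0^t$ piece cancels against the first term of the full integral, leaving
\[
T_\lambda f(a_t) = b_\lambda(a_t)\int_t^\infty f(a_s)\phi_\lambda(a_s)\Delta(s)\,ds - \phi_\lambda(a_t)\int_t^\infty f(a_s) b_\lambda(a_s)\Delta(s)\,ds,
\]
which is the claimed formula.

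The main point requiring attention — the ``hard part'' — is justifying the interchange of integration and the splitting at $s = t$ rigorously, i.e.\ checking absolute convergence of all the integrals that appear. Near $s=0$ one uses local integrability of $b_\lambda$ (Lemma~\ref{lemma-2}(a)) together with $\Delta(s)\asymp s^{m_1+m_2}$ and boundedness of $\phi_\lambda$ on compacta; near $s=\infty$ one uses the decay $|b_\lambda(a_s)|\le C(1+|\lambda|)^M e^{-(\Im\lambda+\rho)s}$ from Lemma~\ref{lemma- estimates of b lambda}(b), the estimate $|\phi_\lambda(a_s)|\le Ce^{(\Im\lambda-\rho)s}$ (valid since $0<\Im\lambda<\rho$, so that $\lambda\in S_1^\circ$ makes $\phi_\lambda$ bounded, or more precisely one invokes the standard bound on $\phi_\lambda$), and $\Delta(s)\asymp e^{2\rho s}$, together with $f\in L^1$ hence $\int|f(a_s)|\Delta(s)\,ds<\infty$. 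Once these bounds are in hand the manipulation above is legitimate for all $t>0$, and the identity of the two $L^1+L^p$ representatives (already known to have the same spherical transform by Lemma~\ref{spherical transform of T-lambda-f}) forces them to agree pointwise a.e., hence everywhere by continuity/smoothness off $K$.
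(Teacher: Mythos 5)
Your argument is correct and is essentially the paper's own proof: the paper likewise writes $T_\lambda f(a_t)=\int_0^\infty f(a_s)\bigl[b_\lambda(a_t)\phi_\lambda(a_s)-\int_K b_\lambda(a_ska_t)\,dk\bigr]\Delta(s)\,ds$ and then invokes Lemma \ref{lemma-functional equation of b-lambda}, so that the integrand vanishes for $s<t$ and reduces to the claimed expression for $s>t$. Your additional remarks on absolute convergence only make explicit what the paper leaves implicit.
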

\begin{proof}
Since 
$$
f*b_\lambda(a_t)=\int_0^\infty f(a_s)\left(\int_K b_\lambda(a_ska_t)dk\right)\Delta(s)ds,
$$
$T_\lambda f(a_t)$ can be written as
\begin{eqnarray*}
T_\lambda f(a_t)=\int_0^\infty f(a_s)\left[b_\lambda(a_t)\phi_\lambda(a_s)-\left(\int_K b_\lambda(a_ska_t)dk\right)\right]\Delta(s)ds.
\end{eqnarray*}
So the proof follows from Lemma \ref{lemma-functional equation of b-lambda}.
\end{proof}

\begin{lemma}\label{L-1 norm of T-lambda-f}
Let $0<\Im\lambda<\rho$ and $f$ be a $K$-biinvariant integrable function on $G$. Also assume that $\lambda\notin B_\rho(0)$. Then 
$T_\lambda f\in L^1(G//K)$ and its $L^1$ norm satisfies $||T_\lambda f||_1\leq C||f||_1(1+|\lambda|)^Ld(\lambda,\partial S_1)^{-1}$, for some non-negative integer $L$, where $d(\lambda,\partial S_1)$ denotes the Euclidean distance of $\lambda$
from the boundary $\partial S_1$ of the strip $ S_1$. 
\end{lemma}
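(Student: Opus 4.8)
The plan is to read off the $L^1$ norm from the closed formula established in Lemma \ref{lemma-another formula of T-lambda-f}, namely $T_\lambda f(a_t)=b_\lambda(a_t)\int_t^\infty f(a_s)\phi_\lambda(a_s)\Delta(s)\,ds-\phi_\lambda(a_t)\int_t^\infty f(a_s)b_\lambda(a_s)\Delta(s)\,ds$. Writing $\|T_\lambda f\|_1=\int_0^\infty|T_\lambda f(a_t)|\Delta(t)\,dt$, the triangle inequality gives $\|T_\lambda f\|_1\le I+II$ with
$$I=\int_0^\infty|b_\lambda(a_t)|\Delta(t)\int_t^\infty|f(a_s)|\,|\phi_\lambda(a_s)|\Delta(s)\,ds\,dt,\qquad II=\int_0^\infty|\phi_\lambda(a_t)|\Delta(t)\int_t^\infty|f(a_s)|\,|b_\lambda(a_s)|\Delta(s)\,ds\,dt.$$
The integrands are nonnegative, so Tonelli's theorem lets us interchange the order of integration over the region $\{0<t<s\}$:
$$I=\int_0^\infty|f(a_s)|\,|\phi_\lambda(a_s)|\Delta(s)\Bigl(\int_0^s|b_\lambda(a_t)|\Delta(t)\,dt\Bigr)ds,\qquad II=\int_0^\infty|f(a_s)|\,|b_\lambda(a_s)|\Delta(s)\Bigl(\int_0^s|\phi_\lambda(a_t)|\Delta(t)\,dt\Bigr)ds.$$
Since $\|f\|_1=\int_0^\infty|f(a_s)|\Delta(s)\,ds$, it suffices to bound the factors $|\phi_\lambda(a_s)|\int_0^s|b_\lambda(a_t)|\Delta(t)\,dt$ and $|b_\lambda(a_s)|\int_0^s|\phi_\lambda(a_t)|\Delta(t)\,dt$ uniformly in $s>0$.

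The ingredients are the estimates of $b_\lambda$ in Lemma \ref{lemma- estimates of b lambda}, the asymptotics $\Delta(t)\asymp t^{m_1+m_2}$ near $0$ and $\Delta(t)\asymp e^{2\rho t}$ near $\infty$, the trivial bound $|\phi_\lambda(a_t)|\le 1$ (valid since $\lambda\in S_1$), and a $\lambda$-uniform decay estimate $|\phi_\lambda(a_t)|\le C(1+|\lambda|)^{d_0}e^{(\Im\lambda-\rho)t}$ for all $t\ge0$. The last estimate follows from the decomposition $\phi_\lambda=c(\lambda)\Phi_\lambda+c(-\lambda)\Phi_{-\lambda}$, the asymptotics (\ref{est-Phi}) for $\Phi_\lambda$ and their analogue for $\Phi_{-\lambda}$, and polynomial bounds for $c(\pm\lambda)$ on the region $\{0<\Im\lambda<\rho\}\setminus B_\rho(0)$; crucially no extra factor $(1+t)$ appears, because that region is disjoint from $\tfrac i2\Z$ — indeed $\lambda\notin B_\rho(0)$ together with $\Im\lambda<\rho$ forces $\Re\lambda\ne0$ and excludes a neighbourhood of each half-integer point on the imaginary axis, which are exactly the points at which $\Phi_\lambda$ and $\Phi_{-\lambda}$ coalesce. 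Integrating, one obtains $\int_0^s|b_\lambda(a_t)|\Delta(t)\,dt\le C(1+|\lambda|)^{K_1}\bigl(1+(\rho-\Im\lambda)^{-1}e^{(\rho-\Im\lambda)s}\bigr)$ (the integrand being at most $C(1+|\lambda|)^{K_1}t$ on $(0,\tfrac12]$ and at most $C(1+|\lambda|)^{K_1}e^{(\rho-\Im\lambda)t}$ on $[\tfrac12,\infty)$) and, similarly, $\int_0^s|\phi_\lambda(a_t)|\Delta(t)\,dt\le C(1+|\lambda|)^{K_2}\bigl(1+e^{(\rho+\Im\lambda)s}\bigr)$.

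Substituting these in, the exponentials cancel exactly: $|\phi_\lambda(a_s)|\int_0^s|b_\lambda(a_t)|\Delta(t)\,dt\le C(1+|\lambda|)^{d_0+K_1}\bigl(e^{(\Im\lambda-\rho)s}+(\rho-\Im\lambda)^{-1}\bigr)\le C(1+|\lambda|)^{d_0+K_1}(\rho-\Im\lambda)^{-1}$, using $e^{(\Im\lambda-\rho)s}\le1$ and $\rho-\Im\lambda<\rho$, so $I\le C(1+|\lambda|)^{L}\,\|f\|_1\,(\rho-\Im\lambda)^{-1}$. For $II$, when $s\ge\tfrac12$ one has $|b_\lambda(a_s)|\int_0^s|\phi_\lambda(a_t)|\Delta(t)\,dt\le C(1+|\lambda|)^{M+K_2}e^{-(\rho+\Im\lambda)s}\bigl(1+e^{(\rho+\Im\lambda)s}\bigr)\le C(1+|\lambda|)^{M+K_2}$, while for $0<s<\tfrac12$ the singular bound $|b_\lambda(a_s)|\le C(1+|\lambda|)^{N}s^{-(m_1+m_2-1)}$ (or $C\log\tfrac1s$ when $m_1+m_2=1$) against $\int_0^s|\phi_\lambda(a_t)|\Delta(t)\,dt\le Cs^{m_1+m_2+1}$ gives a quantity bounded by $C(1+|\lambda|)^{N}s^2$; hence $II\le C(1+|\lambda|)^{L}\|f\|_1$. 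The right-hand sides being finite shows in particular that $T_\lambda f\in L^1(G//K)$ (it is $K$-biinvariant and, being a sum of an $L^1$ and an $L^p$ function, locally integrable, with finite total integral), and adding the two bounds — using $d(\lambda,\partial S_1)=\rho-\Im\lambda<\rho$ for $0<\Im\lambda<\rho$ — yields $\|T_\lambda f\|_1\le C(1+|\lambda|)^{L}\|f\|_1\,d(\lambda,\partial S_1)^{-1}$, as claimed.

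I expect the delicate step to be the $\lambda$-uniform decay estimate for $\phi_\lambda$: a priori $\phi_\lambda(a_t)$ carries a factor $(1+t)$ for $\lambda$ near $\tfrac i2\Z$, and that would force $\|f\|_1$ to be replaced by $\int_0^\infty|f(a_s)|(1+s)\Delta(s)\,ds$ in $I$, destroying the bound. Ruling this out, and keeping all implied constants polynomial in $|\lambda|$ (rather than blowing up like $(\rho-\Im\lambda)^{-1}$) near the edge of the strip, is precisely where the hypothesis $\lambda\notin B_\rho(0)$ is used; the accompanying $\lambda$-uniform control of $\Phi_{-\lambda}$ and of $c(\pm\lambda)$, established by the same hypergeometric estimates as in Lemma \ref{lemma- estimates of b lambda} and the appendix, is the remaining routine-but-lengthy bookkeeping.
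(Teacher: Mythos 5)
Your proof is correct and follows essentially the same route as the paper: both start from the formula of Lemma \ref{lemma-another formula of T-lambda-f}, interchange the order of integration over $\{t<s\}$, combine the estimates of Lemma \ref{lemma- estimates of b lambda} with $|\phi_\lambda(a_s)|\lesssim|c(-\lambda)|e^{(\Im\lambda-\rho)s}$ and $|\phi_\lambda|\leq 1$, and use $\lambda\notin B_\rho(0)$ to keep all constants polynomial in $|\lambda|$. The differences are purely organizational: the paper first splits the outer $t$-integral at $1/2$ into four pieces $I_1,\dots,I_4$ and only then applies Fubini to the large-$t$ pieces, and it carries the factor $|c(-\lambda)|$ explicitly (bounding it by a polynomial at the very end via the Gamma-function formula), whereas you perform Tonelli globally and absorb that factor into a single $(1+|\lambda|)^{d_0}$ bound for $\phi_\lambda$ — a bound the paper uses with the same level of justification.
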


\begin{proof}
We use the formula of $T_\lambda f$ as given in Lemma \ref{lemma-another formula of T-lambda-f}. First we estimate $\int_0^{1/2}|T_\lambda f(a_t)|\Delta(t)dt$. For that we need to rewrite the estimate of $b_\lambda$ given in Lemma \ref{lemma- estimates of b lambda} (a) in the following way : $|b_\lambda(a_t)|\leq r_\lambda(t)$ for all $t\in(0,1/2],$ where
\begin{eqnarray*}
r_\lambda(t)=
\begin{cases}C(1+|\lambda|)^Nt^{-(m_1+m_2-1)},\hspace{3mm}\textup{if}\hspace{1mm} m_1+m_2>1\\
C \log \frac{1}{t}\hspace{33.5mm}\textup{if}\hspace{1mm}m_1+m_2=1.
\end{cases}. 
\end{eqnarray*}
We use the following properties of $r_\lambda$ :

\noindent (i) $r_\lambda$ is a decreasing function,

\noindent (ii) $\int_0^{1/2}r_\lambda(t)\Delta(t)dt\leq C(1+|\lambda|)^N$.

 \noindent Also we use $|b_\lambda(a_t)|\leq C (1+|\lambda|)^Me^{-(\Im\lambda+\rho)t}, t\in [1/2, \infty)$ (see Lemma \ref{lemma- estimates of b lambda}, (b)).

We can write $$\|T_\lambda f\|_1 \leq I_1 + I_2 + I_3 + I_4,$$
where $$I_1=\int_0^{1/2}|b_\lambda(a_t)|\left(\int_t^{\infty}|f(a_s)|\Delta(s)ds\right)\Delta(t)dt $$  $$I_2=\int_0^{1/2}\left(\int_t^\infty|f(a_s)||b_\lambda(a_s)|\Delta(s)ds\right)\Delta(t)dt,$$

$$I_3=\int_{1/2}^\infty|b_\lambda(a_t)|\left(\int_t^\infty |f(a_s)|\,|\phi_\lambda(a_s)|\Delta(s)ds\right)\Delta(t)dt$$ and $$I_4=\int_{1/2}^\infty  |\phi_\lambda(a_t)|\left(\int_t^\infty |f(a_s)|\,|b_\lambda(a_s)|\Delta(s)ds\right)\Delta(t)dt.$$
Then, $$\hspace{-3.15in} I_1\leq\|f\|_1 \int_0^{1/2}r_\lambda(t)\Delta(t) \,dt\leq C(1+|\lambda|)^N \|f\|_1,$$ 

\begin{eqnarray*}
I_2 &=& \int_0^{1/2}\left(\int_t^{1/2}|f(a_s)|\,|b_\lambda(a_s)|\Delta(s)ds\right)\Delta(t)dt  + \int_0^{1/2}\left(\int_{1/2}^\infty|f(a_s)|\,|b_\lambda(a_s)|\Delta(s)ds\right)\Delta(t)dt\\ 
&\leq & \int_0^{1/2}\left(\int_t^{1/2}|f(a_s)| \, r_\lambda(s) \Delta(s)ds\right)\Delta(t)dt  + C \int_{1/2}^\infty|f(a_s)||b_\lambda(a_s)|\Delta(s)ds \\ 
&\leq & \int_0^{1/2}r_\lambda(t) \left(\int_t^{1/2}|f(a_s)| \Delta(s)ds\right)\Delta(t)dt   +C (1 + |\lambda|)^M \int_{1/2}^\infty|f(a_s)| e^{-(\Im\lambda + \rho)s}\Delta(s)ds \\ 
&\leq & C(1 +|\lambda|)^{\max\{M, N\}} \|f\|_1.
\end{eqnarray*}

Using the estimate of $b_\lambda$ and $\phi_\lambda$ we get,
\begin{eqnarray*}
I_3&\leq &  C (1 + |\lambda|)^M |c(-\lambda)|\int_{1/2}^\infty e^{(-\Im\lambda-\rho)t}e^{2\rho t}\left(\int_t^\infty |f(a_s)|e^{(\Im\lambda-\rho)s}\Delta(s)ds\right)dt\\
&=& C (1 + |\lambda|)^M |c(-\lambda)| \int_{1/2}^\infty |f(a_s)|e^{(\Im\lambda-\rho)s} \left(\int_{1/2}^s e^{(-\Im\lambda-\rho)t}e^{2\rho t} dt\right)\Delta(s)ds\\
&=& C (1 + |\lambda|)^M |c(-\lambda)|  \int_{1/2}^\infty |f(a_s)|e^{(\Im\lambda-\rho)s} \left(\frac{e^{(-\Im\lambda+\rho)s}-e^{\frac{(-\Im\lambda+\rho)}{2}}}{\rho-\Im\lambda}\right)\Delta(s)ds\\
&\leq &  \frac{C (1 + |\lambda|)^M |c(-\lambda)| }{\rho-\Im\lambda}||f||_1,\\
&=& C (1 + |\lambda|)^M |c(-\lambda)|\, ||f||_1d(\lambda,\partial S_1)^{-1}.
\end{eqnarray*}
Similarly we can prove that, 
$$
\hspace{-3.3in}  I_4\leq C (1 + |\lambda|)^M |c(-\lambda)|\, ||f||_1d(\lambda,\partial S_1)^{-1}.
$$ 
Since  $$c(-\lambda)=\frac{2^{\rho + i\lambda} \Gamma(\frac{m_1 + m_2 +1}{2})\Gamma(-i\lambda)}{\Gamma(\frac{\rho- i\lambda}{2}) \Gamma(\frac{m_1+ 2}{4} - \frac{i\lambda}{2})}=\frac{2^{\rho + i\lambda} \Gamma(\frac{m_1 + m_2 +1}{2})\Gamma(1-i\lambda)}{(-i\lambda)\Gamma(\frac{\rho- i\lambda}{2}) \Gamma(\frac{m_1+ 2}{4} - \frac{i\lambda}{2})},$$ by Lemma \ref{estimate-c-function} (see appendix), $$|c(-\lambda)|\leq \frac{C}{|\lambda|(1 + |\lambda|)^{(m_1 + m_2 -2)/2}}.$$
Since $\lambda\notin B_\rho(0),$ $|c(-\lambda)|$ is dominated by a polynomial.
Adding the estimates of $I_1, I_2, I_3$ and $I_4$ the desired result follows.
\end{proof}

\begin{remark}
Let $0<\Im\lambda<\rho$. The proof of the above lemma, in fact shows that $T_\lambda f$ always in $L^1$. To get the desired estimate of the $L^1$ norm of $T_\lambda f$ we only need to throw out some neighborhood of $0$. 
\end{remark}

\section{Resolvent transform}
Let $\delta$ be the $K$-biinvariant distribution on $G$ defined by $\delta(\phi)=\phi(e)$ for all $\phi\in C_c^\infty(G//K)$. Also let $L^1_\delta(G//K)$ be the unital Banach algebra generated by $L^1(G//K)$and $\{\delta\}$. Its maximal ideal space is one point compactification $ S_1\cup\{\infty\}$ of $ S_1$, i.e., more precisely, the maximal ideal space is $\big\{L_z:z\in S_1\cup\{\infty\}\big\}$, where $L_z$ is the complex homomorphism on $L^1_\delta(G//K)$ defined by $L_z(f)=\widehat{f}(z)$. Since the spherical transform of elements in $I$ have no common zeros in $ S_1$, it follows that the maximal ideal space of the quotient algebra $L^1_\delta(G//K)/I$ is $\{\infty\}$ i.e. it consists of only one complex homomorphism, namely $f+I\mapsto\widehat{f}(\infty)$. So, by the Banach algebra theory, an element $f+I$ is invertible in $L^1_\delta(G//K)/I$ iff $\widehat{f}(\infty)\neq 0$.

Let $\lambda_0$ be a fixed complex number with $\Im\lambda_0>\rho$. By Lemma \ref{lemma-2}(b),  $b_{\lambda_0}$ is in $L^1$. Therefore, for $\lambda\in \mathbb{C}$, the function $\widehat{\delta}-(\lambda^2-
\lambda_0^2)\widehat{b}_{\lambda_0}$ does not vanish at $\infty$, and hence $\delta-(\lambda^2-\lambda_0^2)b_{\lambda_0}+I$ is inverible in the quotient algebra $L^1_\delta(G//K)/I$. We put
\begin{eqnarray}\label{definition of B-lambda}
B_\lambda=\left(\delta-(\lambda^2-
\lambda_0^2)b_{\lambda_0}+I\right)^{-1}*\left
(b_{\lambda_0}+I\right),\hspace{3mm}\lambda\in\mathbb{C}
\end{eqnarray}
which is, in fact, an element of $L^1(G//K)/I$.
Now, let $g\in L^\infty(G//K)$ annihilates $I$, so that we may consider $g$ as a bounded linear functional on $L^1(G//K)/I$. We define the resolvent tansform $\mathcal{R}[g]$ of $g$ by
\begin{eqnarray}\label{defn-R-g}
\mathcal{R}[g](\lambda)=\left\langle B_\lambda,g\right \rangle
\end{eqnarray}
From (\ref{definition of B-lambda}) it is easy to see that $\lambda\mapsto B_\lambda$ is a Banach space valued even entire function. It follows that $\mathcal{R}[g]$ is an even entire function. The resolvent transform $\mathcal{R}[g]$ has the following properties.
\begin{lemma} \label{properties of resolvent transform}
Assume $g\in L^\infty(G//K)$ annihilates $I$, and fix a function $f\in I$. Let $Z(\widehat{f}):=\{z\in S_1:\widehat{f}(z)=0\}$.

\noindent (a) $\mathcal{R}[g](\lambda)$ is an even entire function. It is given by the following formula :
\begin{eqnarray*}
\mathcal{R}[g](\lambda)=
\begin{cases} 
\langle b_\lambda,g\rangle,\hspace{3mm}\Im \lambda>\rho,\\
\frac{\langle T_\lambda f,g\rangle}{\widehat{f}(\lambda)}, \hspace{3mm}0<\Im\lambda<\rho, \lambda\notin Z(\widehat{f}).
\end{cases}
\end{eqnarray*}
\noindent (b) For $|\Im\lambda|>\rho$, $\left|\mathcal{R}[g](\lambda)\right|\leq C||g||_\infty\frac{(1+|\lambda|)^K}{d(\lambda,\partial S_1)},$

\noindent (c) For $|\Im\lambda|<\rho$, $\left|\widehat{f}(\lambda)\mathcal{R}[g](\lambda)\right|\leq C||f||_1||g||_\infty\frac{(1+|\lambda|)^L}{d(\lambda,\partial S_1)}$, where the constant $C$ is independent of $f\in I$. 
\end{lemma}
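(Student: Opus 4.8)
The plan is to prove the three parts of Lemma \ref{properties of resolvent transform} in order, combining the identifications of $B_\lambda$ already available in two strips with the norm estimates from Lemmas \ref{lemma- L-1 norm of b-lambda} and \ref{L-1 norm of T-lambda-f}.

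\textbf{Part (a): the explicit formula.} First I would check that $b_\lambda + I$ and $T_\lambda f/\what f(\lambda)$ are indeed representatives of the coset $B_\lambda$ on their respective strips, where $B_\lambda$ is defined by (\ref{definition of B-lambda}). For $\Im\lambda>\rho$, both $b_\lambda$ and $b_{\lambda_0}$ lie in $L^1(G//K)$ (Lemma \ref{lemma-2}(b)), and taking spherical transforms, $\what{b_\lambda}(\xi)=\frac{1}{\xi^2-\lambda^2}$ and $\what{b_{\lambda_0}}(\xi)=\frac{1}{\xi^2-\lambda_0^2}$ (Lemma \ref{lemma-spherical transform of b-lambda}); a direct computation gives $\bigl(\what\delta - (\lambda^2-\lambda_0^2)\what{b_{\lambda_0}}\bigr)\what{b_\lambda} = \what{b_{\lambda_0}}$ on $\R$, so $\bigl(\delta - (\lambda^2-\lambda_0^2)b_{\lambda_0}\bigr)*b_\lambda - b_{\lambda_0}$ has vanishing spherical transform, hence lies in $I$ (its closure is irrelevant since it is literally $0$ as an $L^1$ function by injectivity of the spherical transform on $L^1(G//K)$ — actually injectivity suffices here); therefore $b_\lambda + I = B_\lambda$. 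Consequently $\mathcal R[g](\lambda)=\langle b_\lambda,g\rangle$ for $\Im\lambda>\rho$. For $0<\Im\lambda<\rho$ with $\lambda\notin Z(\what f)$, I would verify similarly that $T_\lambda f/\what f(\lambda) + I = B_\lambda$: by Lemma \ref{spherical transform of T-lambda-f}, $\what{T_\lambda f}(\xi) = \frac{\what f(\lambda) - \what f(\xi)}{\xi^2-\lambda^2}$, so $\what{T_\lambda f/\what f(\lambda)}(\xi) = \frac{1}{\xi^2-\lambda^2} - \frac{\what f(\xi)}{\what f(\lambda)(\xi^2-\lambda^2)}$, and since $f\in I$ the second term is the spherical transform of an element of $I$ (namely a scalar multiple of $f*b_\lambda$, noting $f*b_\lambda\in L^1$); one checks $\bigl(\what\delta - (\lambda^2-\lambda_0^2)\what{b_{\lambda_0}}\bigr)\cdot\what{T_\lambda f/\what f(\lambda)} \equiv \what{b_{\lambda_0}} \pmod{\what I}$, giving $T_\lambda f/\what f(\lambda) + I = B_\lambda$. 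Since $B_\lambda$ is independent of the choice of $f\in I$, the two branches agree. Evenness and entirety of $\mathcal R[g]$ follow from those of $\lambda\mapsto B_\lambda$, as already noted after (\ref{defn-R-g}).

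\textbf{Parts (b) and (c): the estimates.} These are now immediate from the integral formula. For $|\Im\lambda|>\rho$: by evenness it suffices to treat $\Im\lambda>\rho$, where $\mathcal R[g](\lambda)=\langle b_\lambda,g\rangle$, so $|\mathcal R[g](\lambda)|\le \|b_\lambda\|_1\|g\|_\infty$, and Lemma \ref{lemma- L-1 norm of b-lambda}(a) bounds $\|b_\lambda\|_1$ by $C(1+|\lambda|)^K/(\Im\lambda-\rho) = C(1+|\lambda|)^K/d(\lambda,\partial S_1)$. For $|\Im\lambda|<\rho$, again by evenness take $0<\Im\lambda<\rho$; if $\lambda\in Z(\what f)$ then $\what f(\lambda)=0$ and the inequality in (c) is trivial (the left side is $0$ — here one should note $\mathcal R[g]$ is entire, so the product $\what f(\lambda)\mathcal R[g](\lambda)$ extends continuously and vanishes at zeros of $\what f$), and if $\lambda\notin Z(\what f)$ then $\what f(\lambda)\mathcal R[g](\lambda) = \langle T_\lambda f, g\rangle$, so the bound follows from $|\langle T_\lambda f,g\rangle|\le\|T_\lambda f\|_1\|g\|_\infty$ together with Lemma \ref{L-1 norm of T-lambda-f}, which gives $\|T_\lambda f\|_1\le C\|f\|_1(1+|\lambda|)^L d(\lambda,\partial S_1)^{-1}$ with $C$ independent of $f$ — this last point being exactly why the estimate can later be applied uniformly over $f\in I$. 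One small bookkeeping item: Lemma \ref{L-1 norm of T-lambda-f} assumes $\lambda\notin B_\rho(0)$, so in part (c) for $\lambda$ in the bounded set $\overline{B_\rho(0)}\cap\{0<\Im\lambda<\rho\}$ one argues separately using that $\what f(\lambda)\mathcal R[g](\lambda)$ is entire hence bounded on compacta, absorbing this into the constant (and $d(\lambda,\partial S_1)$ stays bounded below only away from $\partial S_1$, but near $\partial S_1$ one is outside $B_\rho(0)$ for $|\Im\lambda|$ close to $\rho$ — a moment's care handles the overlap).

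\textbf{Main obstacle.} The genuinely substantive content is Part (a), specifically the verification that the natural candidates represent the abstractly-defined coset $B_\lambda$; the estimates in (b) and (c) are then purely mechanical given the earlier lemmas. Within (a), the delicate point is handling the convolutions $b_{\lambda_0}*b_\lambda$ and $f*b_\lambda$ rigorously given that $b_\lambda$ is only a sum of $L^1$ and $L^p$ ($p<2$) functions for $0<\Im\lambda<\rho$ — one must ensure these convolutions are well-defined $L^1$ functions (for $f*b_\lambda$: $f\in L^1$ convolved with $L^1+L^p$ is still in $L^1+L^p$, but actually one wants it in $L^1$; here the hypothesis $f\in I\subseteq L^1(G//K)$ and compact-support-type approximations, or directly Young's inequality showing $f*(L^p)\subseteq L^p$ is not enough — rather one should invoke that $T_\lambda f\in L^1$ was already established in Lemma \ref{L-1 norm of T-lambda-f}, so $f*b_\lambda = \what f(\lambda)b_\lambda - T_\lambda f$ decomposes correctly) and that the spherical-transform identities, a priori valid only pointwise on $\R$ for the $L^2$-tempered distributions involved, legitimately force membership in the closed ideal $I$. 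Once one grants the spherical transform is injective on $L^1(G//K)$ and that $\what I$ is characterized by its common zero set (which is where the no-common-zeros hypothesis enters, via the preceding paragraph's Banach-algebra discussion), this goes through cleanly.
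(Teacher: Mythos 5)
Your proposal follows essentially the same route as the paper: identify $b_\lambda+I$ (for $\Im\lambda>\rho$) and $T_\lambda f/\what{f}(\lambda)+I$ (for $0<\Im\lambda<\rho$, $\lambda\notin Z(\what{f})$) with the abstractly defined $B_\lambda$ via the resolvent identity against the invertible element $\delta-(\lambda^2-\lambda_0^2)b_{\lambda_0}+I$, then read off (b) and (c) from the norm bounds of Lemmas \ref{lemma- L-1 norm of b-lambda} and \ref{L-1 norm of T-lambda-f}, treating $B_\rho(0)$ separately exactly as the paper does.

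One sub-justification in your treatment of the lower strip is wrong as written: you assert that $\what{f}(\xi)/(\xi^2-\lambda^2)$ is the transform of $f*b_\lambda$ with $f*b_\lambda\in L^1$ and hence in $I$. For $0<\Im\lambda<\rho$ the function $b_\lambda$ is not in $L^1(G//K)$ (only $L^1+L^p$), so $f*b_\lambda=\what{f}(\lambda)b_\lambda-T_\lambda f$ is likewise not in $L^1$ when $\what{f}(\lambda)\neq 0$, and "$I$ is an ideal" does not apply. The correct bookkeeping — and what the paper's "small calculation" actually produces — is that after multiplying $\what{T_\lambda f}/\what{f}(\lambda)$ by $1-(\lambda^2-\lambda_0^2)\what{b}_{\lambda_0}$, the error term is $\what{f}\,\what{b}_{\lambda_0}/\what{f}(\lambda)$, i.e.\ the transform of $f*b_{\lambda_0}/\what{f}(\lambda)$, which lies in $I$ precisely because $b_{\lambda_0}\in L^1(G//K)$. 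Your displayed congruence modulo $\what{I}$ is the right one; only the identification of which element of $I$ appears (and its integrability) needs this correction. The rest of the argument, including the uniformity in $f$ for part (c), matches the paper.
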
 
\begin{proof}
(a) Let $\Im\lambda>\rho$. By Lemma \ref{lemma-2}(b) and  Lemma \ref{lemma-spherical transform of b-lambda}, $b_\lambda$ is in $L^1$ and $\widehat{b}_\lambda(z)=\frac{1}{z^2-\lambda^2}$, $z\in S_1$. We observe that for $z\in S_1$,
$$
\frac{1}{\widehat{b}_{\lambda_0}(z)}-\frac{1}{\widehat{b}_\lambda(z)}=\lambda^2-\lambda_0^2
$$
which is equivalent to saying that
$$
\left(1-(\lambda^2-\lambda_0^2)
\widehat{b}_{\lambda_0}(z)\right)\widehat{b}_\lambda(z)=\widehat{b}_{\lambda_0}(z),\hspace{3mm}z\in S_1.
$$
Apply the inverse spherical transform and mod out $I$ to get
$$
\left(\delta-(\lambda^2-\lambda_0^2)
b_{\lambda_0}+I\right)*(b_\lambda+I) =b_{\lambda_0}+I,
$$
Since $\left(\delta-(\lambda^2-\lambda_0^2)
b_{\lambda_0}+I\right)$ is invertible in $L^1_\delta(G//K)/I$, comparing the above equation with \ref{definition of B-lambda} we get $B_\lambda=b_\lambda+I$. Therefore, by the definition of $\mathcal{R}[g](\lambda)$, $\mathcal{R}[g](\lambda)=\langle b_\lambda,g\rangle$.

Next we assume that $0<\Im\lambda<\rho$, $\lambda\notin Z(\widehat{f})$. By Lemma \ref{L-1 norm of T-lambda-f} and Lemma \ref{spherical transform of T-lambda-f}, $T_\lambda f$ is in $L^1$ and $\widehat{T_\lambda f}(z)=\frac{\widehat{f}(\lambda)-\widehat{f}(z)}{z^2-\lambda^2}, z\in S_1$. A small calculation shows that
$$
\left(1-(\lambda^2-\lambda_0^2)
\widehat{b}_{\lambda_0}(z)\right)\frac{\widehat{T_\lambda f}(z)}{\widehat{f}(\lambda)}=\widehat{b}_{\lambda_0}(z)-\frac{\widehat{f}(z)\widehat{b}_{\lambda_0}(z)}{\widehat{f}(\lambda)},\hspace{3mm}z\in S_1.
$$
Again, apply inverse spherical transform and mod out $I$ to get
$$
\left(\delta-(\lambda^2-\lambda_0^2)
b_{\lambda_0}+I\right)*\left(\frac{T_\lambda f}{\widehat{f}(\lambda)}+I\right) =b_{\lambda_0}+I.
$$
Therefore $B_\lambda=\frac{T_\lambda f}{\widehat{f}(\lambda)}+I$ which gives the desired formula for $\mathcal{R}[g](\lambda)$ in this case.
 
(b) It follows from Lemma \ref{lemma- L-1 norm of b-lambda} and the fact that $\mathcal{R}[g](\lambda)$ is even.

(c) From Lemma \ref{L-1 norm of T-lambda-f} it follows that $$\left|\widehat{f}(\lambda)\mathcal{R}[g](\lambda)\right|\leq C||f||_1||g||_\infty\frac{(1+|\lambda|)^L}{d(\lambda,\partial S_1)}$$ for $0<\Im\lambda<\rho,\lambda\not\in B_\rho(0)$, where $C$ is independent of $f\in I$. Since $\widehat{f}(\lambda)\mathcal{R}[g](\lambda)$ is an even continuous function on $ S_1$, the same estimate is true for $|\Im\lambda|<\rho, \lambda\not\in B_\rho(0)$. From (\ref{defn-R-g}) it follows that  $\mathcal{R}[g](\lambda)$ is bounded on $ B_\rho(0)$, with bound independent of $f$. Therefore  on $ B_\rho(0)$$$\left|\widehat{f}(\lambda)\mathcal{R}[g](\lambda)\right|\leq C||f||_1$$ where $C$ is independent of $f$. Hence the proof follows.
\end{proof}

\section{some results from complex analysis}
For any function $F$ on $\R$, we let $$\delta_\infty^+(F)=-\limsup_{t\rightarrow\infty} e^{-\frac{\pi}{2\rho}t}\log|F(t)|$$ and $$\delta_\infty^-(F)=-\limsup_{t\rightarrow\infty} e^{-\frac{\pi}{2\rho}t}\log|F(-t)|.$$
We start with the following Theorem \cite[Theorem 6.8]{Dah} (see also \cite{Hedenmalm-1985}). The proof of the theorem uses the log-log theorem, the Paley-Wiener theorem, Alhfors distortion theorem and the Phragm\'{e}n-Lindel\''{o}f principle. 
\begin{theorem}
Let $M:(0,\infty)\rightarrow (e,\infty)$ be a continuously differentiable decreasing function with 
$$
\lim_{t\rightarrow 0^+}t\log\log M(t)<\infty,\hspace{3mm}\int_0^\infty\log\log M(t)dt<\infty.
$$
Let $\Omega$ be a collection of bounded holomorphic functions on $ S_1^0$ such that 
$$
\inf_{F\in \Omega}\delta^+_\infty(F)=\inf_{F\in \Omega}\delta^-_\infty(F)=0.
$$
Suppose $G$ is a holomorphic function on $\mathbb{C}\setminus Z$ (where $Z$ is a finite subset of $ S_1$) satisfying the following estimates :
\begin{eqnarray*}
|G(z)|&\leq & M\left(d(z,\partial S_1)\right),
\hspace{3mm}z\in\mathbb{C}\setminus S_1,\\
|F(z)G(z)|&\leq & M\left(d(z,\partial S_1)\right),
\hspace{3mm}z\in S_1^0\setminus Z,
\hspace{1mm}\textup{for all}\hspace{1mm}F\in \Omega.
\end{eqnarray*}
Then $G$ is bounded outside a bounded neighborhood of $Z$. 
\end{theorem}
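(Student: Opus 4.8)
The plan is to reduce the assertion to a Phragm\'{e}n--Lindel\"{o}f estimate on a half-plane obtained by conformally straightening the strip $S_1$. Since $Z$ is finite one fixes a bounded open set $V\supset Z$; then it suffices to prove that $u:=\log|G|$, which is subharmonic on $\C\setminus Z$, is bounded above on $\C\setminus V$. The two hypotheses on $G$ read $u(z)\le\log M\big(d(z,\partial S_1)\big)$ on $\C\setminus S_1$ and $u(z)\le\log M\big(d(z,\partial S_1)\big)-\log|F(z)|$ on $S_1^\circ\setminus Z$ for every $F\in\Omega$. I would record at once that the conditions $\int_0^\infty\log\log M<\infty$ and $\lim_{t\to0^+}t\log\log M(t)<\infty$ are precisely the hypotheses making the Levinson--Domar ``log-log'' theorem applicable to the majorant $z\mapsto\log M\big(d(z,\partial S_1)\big)$, while the conditions $\inf_{F\in\Omega}\delta_\infty^+(F)=\inf_{F\in\Omega}\delta_\infty^-(F)=0$ will be used only to control $u$ along sequences on the real axis.

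First I would straighten the strip by $\psi(z)=\exp\!\big(\tfrac{\pi}{2\rho}z\big)$, a conformal map of $S_1^\circ$ onto the right half-plane $\Pi=\{\Re\zeta>0\}$ that sends $\partial S_1$ to $i\R$ and the real axis to $(0,\infty)$ with $|\psi(t)|=e^{\pi t/(2\rho)}$; the two ends $\Re z\to\pm\infty$ of the strip go to $\zeta\to\infty$ and $\zeta\to0$, which are interchanged by the self-map $\zeta\mapsto1/\zeta$ of $\Pi$. The transplant $\tilde u:=u\circ\psi^{-1}$ is subharmonic on $\Pi\setminus\psi(Z)$, and since $G$ is holomorphic across $\partial S_1$ the function $\tilde u$ in fact extends subharmonically a little beyond $i\R$. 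Near $i\R$ I would invoke the Ahlfors distortion theorem to compare the harmonic measure of the thin sub-regions of $S_1$ on which $\log M\big(d(z,\partial S_1)\big)$ is large with their conformal length --- a comparison whose finiteness is exactly the integrability $\int_0^\infty\log\log M<\infty$ --- and combine this with the log-log theorem and the Paley--Wiener logarithmic-integral theorem to produce a harmonic function $h$ on $\Pi$ with $\tilde u\le h$ near $i\R$ and $h$ quasi-bounded; this is the step that dissolves the spurious blow-up of $\log M\big(d(z,\partial S_1)\big)$ at $\partial S_1$.

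Next, given $\epsilon>0$, I would choose $F_\epsilon\in\Omega$ with $\delta_\infty^\pm(F_\epsilon)<\epsilon$. From $\limsup_{t\to+\infty}e^{-\pi t/(2\rho)}\log|F_\epsilon(t)|>-\epsilon$ one gets a sequence $t_n\to+\infty$ with $-\log|F_\epsilon(t_n)|<\epsilon\,e^{\pi t_n/(2\rho)}$, and since $d(t_n,\partial S_1)=\rho$ this gives $u(t_n)<\log M(\rho)+\epsilon\,|\psi(t_n)|$, i.e. $\tilde u(\zeta_n)\le\log M(\rho)+\epsilon|\zeta_n|$ along $\zeta_n=\psi(t_n)\to\infty$; the $\delta_\infty^-$ bound similarly gives $\tilde u(\zeta_n')\le\log M(\rho)+\epsilon/|\zeta_n'|$ along $\zeta_n'\to0$. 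The defining bounds also show $\tilde u$ is of at most exponential type one in $\zeta$ near $\infty$ and in $1/\zeta$ near $0$, so a Phragm\'{e}n--Lindel\"{o}f argument on $\Pi$ --- run near each end, the one at $0$ after the inversion $\zeta\mapsto1/\zeta$, with boundary data on $i\R$ given by $h$ and interior control along $\{\zeta_n\}$, $\{\zeta_n'\}$ --- yields $\tilde u(\zeta)\le\log M(\rho)+C\epsilon\big(|\zeta|+1/|\zeta|\big)+h(\zeta)$ throughout $\Pi$. Letting $\epsilon\to0$ gives $\tilde u\le\log M(\rho)+h$ on $\Pi$, so $u$ is bounded above on $S_1$ outside any fixed neighbourhood of $Z$. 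Finally, $u$ being now bounded on $\partial S_1$, one applies the log-log theorem to the half-planes $\{\pm\Im z>\rho\}$, where $u\le\log M\big(d(z,\partial S_1)\big)$, to bound $u$ there as well, which completes the proof on $\C\setminus V$.

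The step I expect to be the main obstacle is the passage at $\partial S_1$: the hypotheses only give $u\le\log M(d)$, which diverges as $d\to0$, so bounded boundary data for the Phragm\'{e}n--Lindel\"{o}f step cannot be read off directly and must instead be manufactured from the log-log integrability of $M$ via the Ahlfors distortion theorem --- this is the technical heart of the argument. A secondary, more routine, point is that the control on the spine of the strip is available only along a discrete sequence rather than a curve, which is why one must establish the a priori exponential-type-one growth before invoking Phragm\'{e}n--Lindel\"{o}f.
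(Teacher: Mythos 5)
First, a point of reference: this theorem is not proved in the paper at all --- it is quoted from \cite{Dah} (Theorem 6.8; see also \cite{Hedenmalm-1985}), and the paper only records which tools that proof uses (the log-log theorem, the Paley--Wiener theorem, the Ahlfors distortion theorem and the Phragm\'en--Lindel\"of principle), together with the remark that the single-function argument carries over to a family $\Omega$. Your proposal invokes the same four tools, so at the level of ingredients it matches the cited source; but as an argument it has a genuine gap at exactly the point where the theorem is hard.

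The gap is in the treatment of the ends of the strip. Inside $S_1^0$ the hypotheses control only $|FG|$, not $|G|$; since $F$ is merely a bounded holomorphic function, $1/|F|$ may a priori grow faster than any prescribed rate off the real axis and between the points $t_n$. Hence your assertion that ``the defining bounds also show $\tilde u$ is of at most exponential type one in $\zeta$ near $\infty$'' is unjustified --- no a priori growth bound on $G$ in $S_1^0$ is available --- and without such a bound no Phragm\'en--Lindel\"of principle can be invoked at all. Moreover, even granting a type-one bound, Phragm\'en--Lindel\"of on the half-plane is critical at type exactly one (witness $e^{\zeta}$, bounded on $i\R$ and unbounded in $\Pi$), and interior control only along a discrete sequence $\{\zeta_n\}$ on the positive axis does not propagate to the whole half-plane for a subharmonic function. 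This is precisely the step on which the cited proof spends its effort: the sparse information $-\log|F(t_n)|\leq\epsilon\, e^{\pi t_n/(2\rho)}$ is upgraded to control throughout the end of the strip by using that $\log\left(\|F\|_\infty/|F|\right)$ is a nonnegative superharmonic function on $S_1^0$, whose exponentially growing component at $+\infty$ is exactly what $\delta_\infty^+(F)$ measures, combined with harmonic-measure estimates for the regions where $|F|$ is small --- and it is there, not in taming the blow-up of $M\left(d(z,\partial S_1)\right)$ at $\partial S_1$ (which the log-log theorem handles on its own), that the Ahlfors distortion theorem enters. A smaller point: the hypothesis gives $\inf_{F}\delta_\infty^+(F)=0$ and $\inf_{F}\delta_\infty^-(F)=0$ separately, so you are not entitled to a single $F_\epsilon$ with $\delta_\infty^{+}(F_\epsilon)<\epsilon$ and $\delta_\infty^{-}(F_\epsilon)<\epsilon$ simultaneously; this is harmless only because the two ends must in any case be treated independently, but it should be said.
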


\begin{remark}
(i) The  theorem above is stated in  \cite[Theorem 6.8]{Dah} when $\Omega$ is singleton. But from the proof the statement above  follows. 

(ii) It easy to see that the above theorem remains true if $M$ is continuously differentiable except possibly at finite number of points. 
\end{remark}
For our purpose we need the following theorem which is an easy consequence of the above theorem. 
\begin{theorem}\label{loglog theorem}
Let $M$ and $\Omega$ be as in the previous theorem. Suppose $H$ is an entire function such that, for some non-negative integer $N$, it satisfies the following estimates :
\begin{eqnarray*}
|H(z)|&\leq &(1+|z|)^NM\left(d(z,\partial S_1)\right),
\hspace{3mm}z\in\mathbb{C}\setminus S_1,\\
|F(z)H(z)|&\leq & (1+|z|)^NM\left(d(z,\partial S_1)\right),
\hspace{3mm}z\in S_1^0,
\hspace{1mm}\textup{for all}\hspace{1mm}F\in \Omega.
\end{eqnarray*}
Then $H$ is a polynomial. 
\end{theorem}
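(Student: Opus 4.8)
The plan is to reduce Theorem \ref{loglog theorem} to the previous theorem by absorbing the polynomial factor $(1+|z|)^N$ into a modified majorant. The key observation is that $H$ entire with the stated polynomial-times-$M$ growth must, a priori, be of finite exponential type zero in a suitable sense, but more directly: consider the function $G(z) := H(z)/(z-z_0)^{N}$ for a fixed $z_0$ chosen off $S_1$ (say $z_0 = 2i(\rho+1)$), or more symmetrically $G(z) := H(z)/(z^2 + c^2)^{N}$ with $c$ large enough that $\pm ic \notin S_1$ and the zeros $\{\pm ic\}$ form the finite set $Z$. Then $G$ is holomorphic on $\mathbb{C}\setminus Z$ with $Z = \{ic, -ic\}$ finite and $Z \subset \mathbb{C}\setminus S_1$ (so in particular the hypothesis ``$Z$ finite subset of $S_1$'' — wait, the previous theorem wants $Z \subset S_1$; I would instead just take $Z = \emptyset$ if I divide by a polynomial with no zeros, e.g. normalize so the divisor never vanishes on $\mathbb C$, which is impossible for a polynomial, so I keep $Z=\{\pm ic\}$ and note $Z$ need only be finite for the proof of the previous theorem to go through, as remarked).

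First I would fix $c > \rho$ and set $G(z) = H(z)/(z^2+c^2)^N$. For $|z|$ large, $(1+|z|)^N \lesssim |z^2+c^2|^N$, so the estimate $|H(z)| \le (1+|z|)^N M(d(z,\partial S_1))$ gives $|G(z)| \le C\, M(d(z,\partial S_1))$ for $z \in \mathbb{C}\setminus S_1$ outside a large ball; similarly $|F(z)G(z)| \le C\, M(d(z,\partial S_1))$ for $z \in S_1^\circ$ outside a large ball, for all $F \in \Omega$. Near the ball and near $Z$, $G$ is holomorphic except at $\pm ic$ and all quantities are bounded there anyway. One subtlety: $M$ is only required to be bounded below by $e$, so multiplying by a constant $C$ only enlarges $M$; to stay within the hypotheses I would replace $M$ by $\widetilde M := C M$, which is still continuously differentiable, decreasing, and satisfies $\lim_{t\to 0^+} t\log\log \widetilde M(t) < \infty$ and $\int_0^\infty \log\log \widetilde M(t)\,dt < \infty$ because $\log\log(CM(t)) = \log(\log C + \log M(t))$ differs from $\log\log M(t)$ by a bounded amount for small $t$ (where $M(t)$ is large) and is integrable. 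Then the previous theorem applies to $G$ with majorant $\widetilde M$, concluding that $G$ is bounded outside a bounded neighborhood of $Z = \{\pm ic\}$.

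Now $G = H/(z^2+c^2)^N$ is bounded outside a bounded set, and $H$ is entire, hence $G$ is meromorphic on $\mathbb{C}$ with poles only possibly at $\pm ic$ of order at most $N$; being bounded near infinity, $G$ extends to a meromorphic function on the Riemann sphere, so it is rational. Therefore $H(z) = (z^2+c^2)^N G(z)$ is rational and entire, hence a polynomial. The main obstacle — really the only place requiring care — is verifying that the modified majorant $\widetilde M = CM$ still satisfies the two integral/limit conditions, and matching up the ``finite set $Z$'' hypothesis: here I rely on the remark following the previous theorem that $Z$ need only be finite (not necessarily inside $S_1$) for its proof to work, or alternatively I avoid the issue entirely by choosing the divisor to be $(z - a)^N(z+a)^N$ with $a$ real and large, placing $Z$ on the real axis $\subset \partial S_1 \subset S_1$. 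I would write it the second way to stay literally within the stated hypotheses.
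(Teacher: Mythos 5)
There is a genuine gap, and it lies precisely in the step you flag as ``the only place requiring care'': the placement of the zeros of the normalizing polynomial. In the previous theorem the two growth estimates must hold on \emph{all} of $\mathbb{C}\setminus S_1$, respectively on \emph{all} of $S_1^0\setminus Z$, so in particular at points arbitrarily close to the zeros of your divisor, where $G$ blows up like $|z-z_0|^{-N}$. If you put the zeros at $\pm ic$ with $c>\rho$, they lie in $\mathbb{C}\setminus S_1$, where $d(z,\partial S_1)$ is bounded away from $0$ near $\pm ic$; hence $M(d(z,\partial S_1))$ stays bounded there while $|G|$ does not, and the first hypothesis fails (the remarks after the previous theorem do not license moving $Z$ outside $S_1$; they only concern $\Omega$ being a family and $M$ having finitely many non-smooth points). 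Your fallback of taking $\pm a$ real fares no better: the real axis is \emph{not} $\partial S_1$ (which is $\{\,|\Im z|=\rho\,\}$) but lies in $S_1^0$, so near $\pm a$ we again have $d(z,\partial S_1)=\rho$ bounded below, $M(d(z,\partial S_1))$ bounded, and $|F(z)G(z)|\sim |z\mp a|^{-N}$ unbounded; the second hypothesis fails. In both variants the assertion that ``all quantities are bounded there anyway'' is exactly what is false at the poles of $G$.

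The fix, which is what the paper does, is to place the pole \emph{on} the boundary: set $G(z)=H(z)/(z-i\rho)^N$. Then $|z-i\rho|\geq d(z,\partial S_1)$, so the blow-up of $G$ near $i\rho$ is controlled by $d(z,\partial S_1)^{-N}$, a quantity the majorant can see. One then replaces $M(t)$ not by $CM(t)$ but by $M'(t)=Ct^{-N}M(t)$ for $0<t\leq 1$ (and $CM(t)$ for $t>1$); the point is that $\log\log\bigl(Ct^{-N}M(t)\bigr)\leq \log(Ct^{-N})+\log\log M(t)$, so $M'$ still satisfies the two log-log conditions, and the previous theorem applies with $Z=\{i\rho\}\subset S_1$. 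Your constant-multiple modification $\widetilde M=CM$ is not enough even with a correctly placed pole, since near $i\rho$ the ratio $|G|/M$ grows like $d(z,\partial S_1)^{-N}$, not like a constant. The concluding step (boundedness of $G$ off a bounded set, hence $|H(z)|\leq C(1+|z|)^N$, hence $H$ is a polynomial by Liouville) is fine once the previous theorem has been correctly invoked.
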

\begin{proof}
Define the holomorphic function $G$ on $\mathbb{C}-\{i\rho\}$ by
$$
G(z)=\frac{H(z)}{(z-i\rho)^N}.
$$
Since $|z-i\rho|\geq d(z,\partial S_1)$, it follows that $|G(z)|$ is dominated by some constant times $d(z,\partial S_1)^{-N}M\left(d(z,\partial S_1)\right)$ near $i\rho$ (inside $\mathbb{C}\setminus S_1$); where as it is clearly dominated by constant times $M(d(z,\partial S_1))$ outside a neighborhood of $i\rho$ (inside $\mathbb{C}\setminus S_1$). Similar estimates are true for $F(z)G(z)$. In particular we can write the following estimates :
\begin{eqnarray*}
|G(z)| &\leq &
\begin{cases}
C d(z,\partial S_1)^{-N}M\left(d(z,\partial S_1)\right),\,\,\, z\in\mathbb{C}\setminus S_1\,\,\textup{with}\,\,d(z,\partial S_1)\leq 1\\
C M\left(d(z,\partial S_1)\right),\,\,\hspace{18mm}z\in\mathbb{C}\setminus S_1\,\,\textup{with}\,\,d(z,\partial S_1)>1
\end{cases}\\
|F(z)G(z)| &\leq &
\begin{cases}
C d(z,\partial S_1)^{-N}M\left(d(z,\partial S_1)\right),\,\,\,\,\,\hspace{2mm}z\in S_1^0,\,\,\textup{with}\,\,d(z,\partial S_1)\leq 1\,\,\textup{for all}\,\,F\in\Omega,\\
CM\left(d(z,\partial S_1)\right),\,\,\,\,\,\hspace{19mm}z\in S_1^0,\,\,\textup{with}\,\,d(z,\partial S_1)> 1\,\,\textup{for all}\,\,F\in\Omega,
\end{cases}
\end{eqnarray*}
for some constant $C$ which can be chosen to be $\geq 1$. Define $M^\prime:(0,\infty)\rightarrow (e,\infty)$ by
\begin{eqnarray*}
M^\prime(t)=
\begin{cases}
Ct^{-N}M(t),\,\,\, 0<t\leq 1,\\
CM(t)\,\,\,\, t>1
\end{cases}
\end{eqnarray*}
Note that $M^\prime$ is decreasing and continuously differentiable except possible at the point $1$. 
Now, we observe that, for $0<t\leq 1$,
$$
\log\log(Ct^{-N}M(t))=\log\left(\log(Ct^{-N})+\log M(t)\right)\leq \log(Ct^{-N})+\log\log M(t)
$$
Therefore, it follows that, $M^\prime$ satisfies all the  required properties i.e.
$$
\lim_{t\rightarrow 0^+}t\log\log M^\prime(t)<\infty,\hspace{3mm}\int_0^\infty\log\log M^\prime(t)dt<\infty.
$$ 
Also, by the definition of $M^\prime$, we have 
\begin{eqnarray*}
|G(z)|&\leq &M^\prime\left(d(z,\partial S_1)\right),
\hspace{3mm}z\in\mathbb{C}\setminus S_1,\\
|F(z)G(z)|&\leq & M^\prime\left(d(z,\partial S_1)\right),
\hspace{3mm}z\in S_1^0,
\hspace{1mm}\textup{for all}\hspace{1mm}F\in \Omega.
\end{eqnarray*}
Therefore, by the previous theorem,
$G$ is bounded outside a bounded neighborhood of $i\rho$, and hence $|H(z)|\leq C(1+|z|)^N$ in the same region. But $H$ being entire, by Liouville's theorem, it must be a polynomial.
\end{proof}

\section{Proof of the main theorem}
\begin{proof}[proof of Theorem \ref{main-theorem}:]
Since the ideal generated by $\{f_\alpha\mid \alpha\in\Lambda\}$ is same as the ideal generated by the elements $\left\{\frac{f_\alpha}{||f_\alpha||}\mid \alpha\in\Lambda\right\}$ and $\delta_\infty(\widehat{f})=
\delta_\infty
\left({\widehat{\frac{f}{||f||_1}}}\right)$, we can assume that the functions $f_\alpha$ are of unit $L^1$ norm. Let $g\in L^\infty(G//K)$ annihilates the (closed) ideal $I$ generated by $\{f_\alpha\mid \alpha\in\Lambda\}$. It is enough to show that $g=0$. By the given condition we have
$$
\inf_{\alpha\in\Lambda}\delta^+_\infty(\widehat{f_\alpha})=\inf_{\alpha\in\Lambda}\delta^-_\infty(\widehat{f_\alpha})=0.
$$
 Also, by Lemma \ref{properties of resolvent transform}, the entire function $\mathcal{R}[g]$ satisfies the following estimates
\begin{eqnarray*}
|\mathcal{R}[g](z)|&\leq & C(1+|z|)^N\left(d(z,\partial S_1)\right)^{-1},
\hspace{3mm}z\in\mathbb{C}\setminus S_1,\\
|\widehat{f_\alpha}(z)\mathcal{R}[g](z)|&\leq & C(1+|z|)^N\left(d(z,\partial S_1)\right)^{-1},
\hspace{3mm}z\in S_1^0,
\end{eqnarray*}
for all $\alpha\in\Lambda$, for some constant $C$. (For technical reason the constant $C$ will be taken to be greater than $e$).
 We can define $M:(0,\infty)\rightarrow(e,\infty)$ to be a continuously differentiable decreasing function such that
$M(t)=\frac{C}{t}$ for $0<t< 1$, and  $\int_1^\infty\log\log M(t)dt<\infty$. With this definition of $M$, we  have 
\begin{eqnarray*}
|\mathcal{R}[g](z)|&\leq & (1+|z|)^NM\left(d(z,\partial S_1)\right),
\hspace{3mm}z\in\mathbb{C}\setminus S_1,\\
|\widehat{f_\alpha}(z)\mathcal{R}[g](z)|&\leq & (1+|z|)^NM\left(d(z,\partial S_1)\right),
\hspace{3mm}z\in S_1^0,
\end{eqnarray*}
for all $\alpha\in\Lambda$.
Therefore, by Theorem \ref{loglog theorem}, $\mathcal{R}[g]$ is a polynomial. Since $|\mathcal R[g](z)|\leq \|b_z\|_1 \|g\|_\infty, \Im z>\rho$,  by Lemma \ref{lemma- L-1 norm of b-lambda}, $\mathcal{R}[g](z)\rightarrow 0$ if $|z|\rightarrow\infty$ along the positive imaginary axis. This forces $\mathcal{R}[g]$ to be identically zero. Hence $\langle b_\lambda,g\rangle=0$ whenever $\Im\lambda>\rho$.  By Lemma \ref{denseness of b-lambda},  $\{b_\lambda\mid \Im\lambda>\rho\}$ span a dense subspace of $L^1(G//K)$. Hence $g=0$, as desired. 
\end{proof}

\section{appendix}

\begin{lemma}\label{bound of hypergeometric functon near 1}
Fix two positive numbers $R_1,R_2$, and a non negative integer $k$. Then there is a $2k$th degree polynomial $P$ in three variables with all the co-efficients are non-negative such that
$$
\left|{}_2F_1(a,b;c;x)\right|\leq \frac{P\left(|a|,|b|,|c|\right)}{|(c)_{2k}|}\left|\frac{\Gamma(c+2k)}{\Gamma(b+k)\Gamma(c-b+k)}\right|,\hspace{3mm} 0\leq x<1,
$$
for all $a,b,c$ satisfying the following conditions :
$$
\Re (c-a-b)>R_1, \Re(c-b)>R_2, \Re b>-k+\frac{1}{2}.
$$
\end{lemma}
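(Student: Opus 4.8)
The plan is to reduce to the Euler integral representation \eqref{integral representation of hypergeometric function}; the obstruction is that it requires $\Re c>\Re b>0$, whereas here $\Re b$ is only assumed to exceed $-k+\tfrac12$, which may well be negative. So I would first iterate the contiguous relation \eqref{properties of hypergeometric function-2} exactly $k$ times, raising the parameter $b$ to $b+k$ and $c$ to $c+2k$, and only then invoke \eqref{integral representation of hypergeometric function}. A single application of \eqref{properties of hypergeometric function-2} with parameters $(a',b',c')$ expresses ${}_2F_1(a',b';c';x)$ as $\frac{1}{c'(c'+1)}$ times a linear combination of ${}_2F_1(a',b'+1;c'+2;x)$ and ${}_2F_1(a'+1,b'+1;c'+2;x)$ whose coefficients are polynomials of degree $2$ in $(a',b',c',x)$; this is legitimate since $x\in[0,1)\subset\C\setminus[1,\infty)$. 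Iterating $k$ times, the denominators telescope into the Pochhammer symbol $(c)_{2k}$ and one obtains an identity
\begin{equation*}
{}_2F_1(a,b;c;x)=\frac{1}{(c)_{2k}}\sum_{j=0}^{k}Q_j(a,b,c,x)\,{}_2F_1(a+j,b+k;c+2k;x),
\end{equation*}
with each $Q_j$ a polynomial of degree at most $2k$. Bounding $|x|$ by $1$ and replacing the coefficients of $Q_j$ by their moduli yields polynomials $P_j$ in three variables, with non-negative coefficients and degree $\le 2k$, such that $|Q_j(a,b,c,x)|\le P_j(|a|,|b|,|c|)$ for $0\le x<1$.

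Next I would estimate each ${}_2F_1(a+j,b+k;c+2k;x)$. The shifted parameters satisfy $\Re(b+k)=\Re b+k>\tfrac12>0$ and $\Re\bigl((c+2k)-(b+k)\bigr)=\Re(c-b)+k>0$, so \eqref{integral representation of hypergeometric function} applies; since $x$ is real and lies in $[0,1)$,
\begin{equation*}
\bigl|{}_2F_1(a+j,b+k;c+2k;x)\bigr|\le\left|\frac{\Gamma(c+2k)}{\Gamma(b+k)\Gamma(c-b+k)}\right|\int_0^1 s^{\Re(b+k)-1}(1-s)^{\Re(c-b)+k-1}(1-sx)^{-\Re(a+j)}\,ds .
\end{equation*}
The key claim is that this integral is bounded by a constant depending only on $R_1,R_2$ and $k$, uniformly in $a,b,c$ and in $x\in[0,1)$. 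For $s,x\in[0,1)$ one has $0<1-s\le 1-sx\le 1$, hence $(1-sx)^{-\Re(a+j)}\le(1-s)^{-\max(\Re(a+j),0)}$, and the integral is dominated by the Beta integral $B\bigl(\Re(b+k),\,\Re(c-b)+k-\max(\Re(a+j),0)\bigr)$. Its first argument is $>\tfrac12$; for the second argument, if $\Re(a+j)\le 0$ it equals $\Re(c-b)+k>R_2$, while if $\Re(a+j)>0$ it equals $\Re(c-a-b)+k-j>R_1+k-j\ge R_1$ — this is precisely where the hypothesis $\Re(c-a-b)>R_1$ is used, together with $j\le k$. Thus the second argument exceeds $r_0:=\min(R_1,R_2)>0$, and since $t\mapsto\Gamma(t)/\Gamma(t+\tfrac12)$ is decreasing on $(0,\infty)$, the Beta integral is at most $\sqrt{\pi}\,\Gamma(r_0)/\Gamma(r_0+\tfrac12)=:C_0$.

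Putting the two steps together,
\begin{equation*}
|{}_2F_1(a,b;c;x)|\le\frac{C_0}{|(c)_{2k}|}\left|\frac{\Gamma(c+2k)}{\Gamma(b+k)\Gamma(c-b+k)}\right|\sum_{j=0}^{k}P_j(|a|,|b|,|c|),
\end{equation*}
so one takes $P:=C_0\sum_{j=0}^{k}P_j$ (enlarged by $(|a|+|b|+|c|)^{2k}$ if the exact degree $2k$ is wanted). I expect the only genuinely delicate point to be the case $\Re(a+j)>0$, where the Euler integrand is singular at $s=1$ and one must play it off against $\Re(c-a-b)>R_1$ through $1-sx\ge 1-s$; the rest is bookkeeping of polynomial degrees and the telescoping of the Pochhammer denominators. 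One technical caveat worth recording is that these manipulations need $c,c+1,\dots,c+2k-1$ to avoid the non-positive integers, so that $(c)_{2k}$ and all the occurring hypergeometric symbols are defined; in the applications of this lemma $\Re c\ge 1$, so this is automatic, and in general the estimate extends to the excluded discrete set of $c$ by continuity.
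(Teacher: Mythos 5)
Your proof is correct and is essentially the paper's argument: the paper runs an induction on $k$ whose inductive step is exactly one application of the contiguous relation \eqref{properties of hypergeometric function-2} (raising $b\mapsto b+1$, $c\mapsto c+2$, with the denominators accumulating to $(c)_{2k}$) and whose base case is the Euler integral \eqref{integral representation of hypergeometric function} bounded via $1-s\le|1-sx|\le 1$, which is precisely your unrolled iteration plus Beta-function estimate. The only cosmetic difference is induction versus explicit telescoping, and your closing remarks on degree bookkeeping and on $c$ avoiding the non-positive integers match what the paper leaves implicit.
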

\begin{proof}
We prove by induction on the non negative integer $k$. So, first assume that $k=0$. By the given condition, $\Re c>\Re b>0$. therefore we can use the integral representation.
$$
\left|_2F_1(a,b;c;x)\right|\leq \left|\frac{\Gamma(c)}{\Gamma(b)\Gamma(c-b)}\right|\int_0^1 s^{\Re b-1}(1-s)^{\Re(c-b-1)}(1-sx)^{-\Re a}ds,\hspace{3mm}0\leq x<1.
$$
Since 
$$
1-s\leq|1-sx|\leq 1,
$$
\begin{eqnarray*}
s^{\Re b-1}(1-s)^{\Re (c-b-1)}(1-sx)^{-\Re a}\leq
\begin{cases}
s^{\Re b-1}(1-s)^{\Re(c-a-b)-1}\leq s^{-\frac{1}{2}}(1-s)^{R_1-1}\hspace{3mm}\textup{if}\hspace{1mm}\Re a>0,\\s^{\Re b-1}(1-s)^{\Re(c-b)-1}\leq s^{-\frac{1}{2}}(1-s)^{R_2-1}\hspace{6.5mm}\textup{if}\hspace{1mm}\Re a<0.
\end{cases}
\end{eqnarray*}
Hence the lemma is true if $k=0$. So assume that the lemma is true for $k=n$ i.e there is a  polynomial $P=P_n$ of degree $2n$ in three variables with all non-negative coefficients such that
\begin{eqnarray}\label{0.1}
\left|{}_2F_1(a,b;c;x)\right|\leq \frac{P_n\left(|a|,|b|,|c|\right)}{|(c)_{2n}|}\left|\frac{\Gamma(c+2n)}{\Gamma(b+n)\Gamma(c-b+n)}\right|,\hspace{3mm} 0\leq x<1,\\
\textup{whenever}\hspace{1mm}
\Re (c-a-b)>R_1, \Re(c-b)>R_2, \Re b>-n+\frac{1}{2}.
\end{eqnarray}
Now assume that $\Re(c-a-b)>R_1, \Re(c-b)>R_2,\Re b>-(n+1)+\frac{1}{2}$. By \ref{properties of hypergeometric function-2}, 
\begin{eqnarray*}
 c(c+1){}_2F_1(a,b;c;x)&=& c(c-a+1)_2F_1(a,b+1;c+2;x)\\&&+
a\left[c-(c-b)x\right]{}
 _2F_1(a+1,b+1;c+2;x), 0\leq x<1.
\end{eqnarray*} 
Note that we can use the induction hypothesis (\ref{0.1}) on the two hypergeometric function appeared on the right hand side. So we get (for $0\leq x<1$)
\begin{eqnarray*}
\left|{}_2F_1(a,b;c;x)\right|&\leq &\frac{|c|(|c|+|a|+1)}{|c||c+1|} \frac{P_n\left(|a|,|b+1|,|c+2|\right)}{|(c+2)_{2n}|}\left|\frac{\Gamma(c+2n+2)}{\Gamma(b+n+1)\Gamma(c-b+n+1)}\right|\\
&&+\frac{|a|(|c|+|c|+|b|)}{|c||c+1|} \frac{P_n\left(|a+1|,|b+1|,|c+2|\right)}{|(c+2)_{2n}|}\left|\frac{\Gamma(c+2n+2)}{\Gamma(b+n+1)\Gamma(c-b+n+1)}\right|.
\end{eqnarray*}
Since the coefficients of $P_n$ are all non-negative,
$$
P_n(|a|,|b+1|,|c+2|)\leq P_n(|a|,|b|+1,|c|+2)
$$
which can be written as a $2n$th degree polynomial in $|a|,|b|,|c|$ with all the coefficients are non-negative. The same is true for $P_n(|a+1|,|b+1|,|c+2|)$.
Again, $|c||(c+1)||(c +2)_{2n}|=|(c)_{2n+2}|$. Therefore, it follows that the lemma is true for $k=n+1$. 
\end{proof}

\begin{lemma}\label{gamma-1}
Let $a, b>0$ be fixed real number. Also fix $\delta>0$. Then
$$
\left|\frac{\Gamma(a+z)}{\Gamma(b+z)}\right|\asymp (1+|z|)^{a-b}\hspace{3mm}\textup{for all}\hspace{1mm}|\arg z|\leq\pi-\delta.
$$

\end{lemma}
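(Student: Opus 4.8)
The plan is to derive the estimate from Stirling's asymptotic formula for the Gamma function, which holds uniformly on sectors bounded away from the negative real axis. Recall (see e.g. \cite{Lebedev}) that for $|\arg w|\le\pi-\delta$ one has
$$\log\Gamma(w)=\left(w-\tfrac12\right)\log w-w+\tfrac12\log(2\pi)+O\!\left(|w|^{-1}\right)\quad\text{as }|w|\to\infty,$$
with the implied constant depending only on $\delta$ and $\log$ denoting the principal branch. First I would fix $z$ with $|\arg z|\le\pi-\delta$ and $|z|$ large; then $a+z$ and $b+z$ lie in a slightly narrower sector $|\arg(\cdot)|\le\pi-\delta'$, so the expansion applies to both $\Gamma(a+z)$ and $\Gamma(b+z)$.

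Next I would subtract the two expansions. Using $\log(a+z)=\log z+a/z+O(|z|^{-2})$ (valid for large $|z|$ in the sector, with compatible branches) and the analogous identity for $b$, a short bookkeeping of the error terms shows that the $a$ and $b$ contributions combine to give
$$\log\Gamma(a+z)-\log\Gamma(b+z)=(a-b)\log z+O\!\left(|z|^{-1}\right),\qquad|z|\to\infty,\ |\arg z|\le\pi-\delta.$$
Taking real parts and exponentiating yields $\left|\Gamma(a+z)/\Gamma(b+z)\right|=|z|^{a-b}\,e^{O(|z|^{-1})}$; since the exponential factor is bounded above and below by positive constants once $|z|$ is large, this gives $\left|\Gamma(a+z)/\Gamma(b+z)\right|\asymp|z|^{a-b}\asymp(1+|z|)^{a-b}$ in the region $|z|\ge 1$, $|\arg z|\le\pi-\delta$.

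Finally I would treat the bounded part of the sector. On the compact set $\{|z|\le 1,\ |\arg z|\le\pi-\delta\}$ (together with the point $z=0$) the arguments $a+z$ and $b+z$ never meet a pole of $\Gamma$, since such poles occur only at non-positive integers, which would force $z$ to be a negative real number of modulus $\ge a$ or $\ge b$ and hence is excluded; therefore $z\mapsto\Gamma(a+z)/\Gamma(b+z)$ extends to a continuous, nowhere-vanishing function there (with value $\Gamma(a)/\Gamma(b)$ at $z=0$), so it is pinched between two positive constants, and so is $(1+|z|)^{a-b}$. Combining the two regimes gives the claimed estimate on the whole sector. I do not anticipate a genuine obstacle here: the only delicate points are the uniformity of the Stirling remainder over the sector and the careful tracking of the $O(|z|^{-1})$ terms when expanding the logarithms, both of which are routine.
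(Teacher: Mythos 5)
Your proof is correct and follows essentially the same route as the paper's: both rest on Stirling's formula applied uniformly on the sector $|\arg z|\le\pi-\delta$, the paper working multiplicatively with $\bigl|(1+a/z)^z/(1+b/z)^z\bigr|\to e^{a-b}$ where you work with logarithms and subtract the expansions. Your explicit treatment of the compact part of the sector (via continuity and the absence of poles off the negative real axis) is a point the paper leaves implicit, so if anything your write-up is slightly more complete.
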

\begin{proof}
Let $z$ be any complex number such that $|\arg z|\leq\pi-\delta$. By Starling formula we can say that
\begin{eqnarray*}
\left|\frac{\Gamma(a+z)}{\Gamma(b+z)}\right|&\asymp& \left|\frac{\sqrt{\frac{2\pi}{a+z}}(\frac{a+z}{e})^{a+z}}{\sqrt{\frac{2\pi}{b+z}}(\frac{b+z}{e})^{b+z}}\right|\\
&\asymp & \left|\frac{(a+z)^{a+z}}{(b+z)^{b+z}}\right|.
\end{eqnarray*}
Since $|(a+z)^a|=|a+z|^a\asymp(1+|z|)^a$ and $|(b+z)^b|\asymp (1+|z|)^b$, we have
 \begin{eqnarray*}
\left|\frac{\Gamma(a+z)}{\Gamma(b+z)}\right|
&\asymp & (1+|z|)^{a-b}\left|\frac{(a+z)^{z}}{(b+z)^{z}}\right|= (1+|z|)^{a-b}\left|\frac{(1+\frac{a}{z})^z}{(1+\frac{b}{z})^{z}}\right|.
\end{eqnarray*}
Now, writing $z=re^{i\theta}$, $|\theta|\leq\pi-\delta$,
$$
\lim_{r\rightarrow\infty}\left(1+\frac{a}{re^{i\theta}}\right)^{re^{i\theta}}=
\left[\lim_{r\rightarrow\infty}\left(1+\frac{ae^{-i\theta}}{r}\right)^{r}\right]^{e^{i\theta}}=e^a.
$$
Therefore
$$ 
\lim_{z\rightarrow\infty}\left(1+\frac{a}{z}\right)^{z}=e^a.
$$
Similarly
$$ 
\lim_{z\rightarrow\infty}\left(1+\frac{b}{z}\right)^{z}=e^b.
$$
Hence the proof follows.
\end{proof}

\begin{lemma}\label{estimate-c-function}
Let $a,  b, c>0$ be fixed real numbers and let  $\delta>0$ be fixed. Then
$$
\left|\frac{\Gamma(a+\frac z2) \Gamma(b + \frac z2)}{\Gamma(c+z)}\right|\asymp 2^{-\Re z}(1+|z|)^{a + b-c-\frac 12}\hspace{3mm}\textup{for all}\hspace{1mm}|\arg z|\leq\pi-\delta.
$$
\end{lemma}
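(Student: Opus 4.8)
The plan is to remove, by means of the Legendre duplication formula, the mismatch between the numerator arguments $a+\frac z2,\ b+\frac z2$ (which grow like $\frac z2$) and the denominator argument $c+z$ (which grows like $z$), and then to quote Lemma \ref{gamma-1} directly.

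First I would rewrite the denominator. Applying the duplication formula $\Gamma(2s)=\frac{2^{2s-1}}{\sqrt\pi}\,\Gamma(s)\,\Gamma\bigl(s+\tfrac12\bigr)$ with $s=\frac{c+z}{2}$ gives
\begin{equation*}
\Gamma(c+z)=\frac{2^{\,c+z-1}}{\sqrt\pi}\,\Gamma\Bigl(\tfrac c2+\tfrac z2\Bigr)\,\Gamma\Bigl(\tfrac{c+1}{2}+\tfrac z2\Bigr),
\end{equation*}
so that
\begin{equation*}
\frac{\Gamma\bigl(a+\frac z2\bigr)\Gamma\bigl(b+\frac z2\bigr)}{\Gamma(c+z)}
=\sqrt\pi\,2^{\,1-c}\,2^{-z}\cdot\frac{\Gamma\bigl(a+\frac z2\bigr)}{\Gamma\bigl(\frac c2+\frac z2\bigr)}\cdot\frac{\Gamma\bigl(b+\frac z2\bigr)}{\Gamma\bigl(\frac{c+1}{2}+\frac z2\bigr)}.
\end{equation*}
Now every Gamma argument on the right has the shape (positive constant) $+\ \frac z2$.

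Next, observing that $\bigl|\arg\frac z2\bigr|=|\arg z|\le\pi-\delta$ and that $a,\ b,\ \frac c2,\ \frac{c+1}{2}$ are all positive, I would apply Lemma \ref{gamma-1} to each of the two ratios on the right, with $\frac z2$ playing the role of the variable, to obtain
\begin{equation*}
\left|\frac{\Gamma\bigl(a+\frac z2\bigr)}{\Gamma\bigl(\frac c2+\frac z2\bigr)}\right|\asymp\Bigl(1+\tfrac{|z|}{2}\Bigr)^{a-\frac c2},\qquad
\left|\frac{\Gamma\bigl(b+\frac z2\bigr)}{\Gamma\bigl(\frac{c+1}{2}+\frac z2\bigr)}\right|\asymp\Bigl(1+\tfrac{|z|}{2}\Bigr)^{b-\frac{c+1}{2}}.
\end{equation*}
Taking moduli in the identity above, using $|2^{-z}|=2^{-\Re z}$ and $1+\tfrac{|z|}{2}\asymp 1+|z|$, absorbing the constant $\sqrt\pi\,2^{1-c}$ into the implied constants, and adding the exponents $\bigl(a-\tfrac c2\bigr)+\bigl(b-\tfrac{c+1}{2}\bigr)=a+b-c-\tfrac12$, one arrives at the asserted estimate.

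The only real work here is bookkeeping: checking that the sector hypothesis $|\arg z|\le\pi-\delta$ transfers without change to $\frac z2$ so that Lemma \ref{gamma-1} applies verbatim, and that the finitely many positive constants produced along the way are harmless in $\asymp$. I do not expect any analytic difficulty beyond what is already encapsulated in Lemma \ref{gamma-1} (equivalently, in Stirling's formula).
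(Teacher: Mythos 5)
Your proof is correct, and it takes a genuinely different route from the paper's. The paper proves this lemma by applying Stirling's formula directly to all three Gamma factors and then extracting the factor $2^{-\Re z}$ by hand from the identity $\left(a+\frac z2\right)^{z/2}\left(b+\frac z2\right)^{z/2}=2^{-z}(2a+z)^{z/2}(2b+z)^{z/2}$, finishing with the same limit computations $\left(1+\frac{d}{z}\right)^z\to e^d$ that appear in Lemma \ref{gamma-1}; in effect it redoes the work of that lemma in a three-Gamma setting. You instead use the Legendre duplication formula to split $\Gamma(c+z)$ into $\frac{2^{c+z-1}}{\sqrt\pi}\,\Gamma\left(\frac c2+\frac z2\right)\Gamma\left(\frac{c+1}{2}+\frac z2\right)$, after which the quotient factors into two ratios of Gammas with arguments of the common shape $(\text{positive constant})+\frac z2$, and Lemma \ref{gamma-1} applies verbatim (the sector hypothesis transfers since $\arg\frac z2=\arg z$, and $a$, $b$, $\frac c2$, $\frac{c+1}{2}$ are all positive). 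The exponent bookkeeping $\left(a-\frac c2\right)+\left(b-\frac{c+1}{2}\right)=a+b-c-\frac12$ and $|2^{-z}|=2^{-\Re z}$ then give the claim. Your version buys modularity and makes the origin of the factor $2^{-\Re z}$ completely transparent; the paper's version is self-contained but repeats the Stirling analysis. Both arguments share the same (minor, and already present in Lemma \ref{gamma-1}) point that the $\asymp$ on bounded parts of the sector follows from continuity and nonvanishing rather than from Stirling itself.
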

\begin{proof}
 By Starling formula we have, for $|\arg z|\leq \pi-\delta$, 
\begin{eqnarray*}
\left|\frac{\Gamma(a+\frac{z}{2})\Gamma(b+\frac{z}{2})}{\Gamma(c+z)}\right|&\asymp& \left|\frac{\sqrt{\frac{2\pi}{a+\frac{z}{2}}}(\frac{a+\frac{z}{2}}{e})^{a+\frac{z}{2}}\sqrt{\frac{2\pi}{b+\frac{z}{2}}}(\frac{b+\frac{z}{2}}{e})^{b+\frac{z}{2}}}{\sqrt{\frac{2\pi}{c+z}}(\frac{c+z}{e})^{c+z}}\right|\\
&\asymp & (1+|z|)^{-\frac{1}{2}}\left|\frac{(a+\frac{z}{2})^{a+\frac{z}{2}}(b+\frac{z}{2})^{b+\frac{z}{2}}}{(c+z)^{c+z}}\right|\\
& \asymp & (1+|z|)^{a+b-c-\frac{1}{2}}\left|\frac{(a+\frac{z}{2})^{\frac{z}{2}}(b+ \frac{z}{2})^{\frac{z}{2}}}{(c+z)^{z}}\right|\\
&=& 2^{-\Re z}(1+|z|)^{a+b-c-\frac{1}{2}}\left|\frac{(2a+z)^{\frac{z}{2}}(2b+z)^{\frac{z}{2}}}{(c+z)^{z}}\right|\\
&=& 2^{-\Re z}(1+|z|)^{a+b-c-\frac{1}{2}}\frac{\left|(1+\frac{2a}{z})^{z}(1+\frac{2b}{z})^{z}\right|^{\frac{1}{2}}}{\left|(1+\frac{c}{z})^{z}\right|}.
\end{eqnarray*}
Now the proof can be completed as in the previous Lemma.
\end{proof}

\end{document}